\documentclass[12pt]{article}

\usepackage[paper=letterpaper,margin=0.7in,twoside=false,includehead]{geometry}

\usepackage{amsfonts,amsmath,amsthm, amssymb, thmtools, thm-restate} 
 
\usepackage{hyperref}
\usepackage[capitalize]{cleveref}

\usepackage{graphicx}
\usepackage{fancyhdr}
\usepackage{xparse}
\usepackage{mathtools}

\usepackage{verbatim}

\usepackage[shortlabels]{enumitem}
\usepackage[normalem]{ulem}

\SetEnumitemKey{thmparts}{topsep=2pt plus 1pt minus1pt, itemsep=1pt plus0.5pt minus0.5pt}

\usepackage[T1]{fontenc}

\usepackage{tikz}
\usepackage{tkz-berge}
\usetikzlibrary{decorations.pathreplacing}

\newcommand{\N}{\mathbb{N}}

\newcommand{\symd}{\bigtriangleup}
\DeclarePairedDelimiter\ceil{\lceil}{\rceil}
\DeclarePairedDelimiter\floor{\lfloor}{\rfloor}

\DeclarePairedDelimiter\abs{|}{|}
\DeclarePairedDelimiter\parens{(}{)}
\DeclarePairedDelimiter\set{\{}{\}}

\DeclarePairedDelimiterX\setof[2]{\{}{\}}{#1\,:\,#2}

\DeclareDocumentCommand{\k}{O{t}}{k^{#1}}

\DeclareMathOperator{\Ts}{T}
\DeclareMathOperator{\CTs}{CT}

\DeclareDocumentCommand{\G}{O{r}O{n}}{G^*_{#2,#1}}
\DeclareDocumentCommand{\Gs}{O{r}O{n}O{\ell}}{G^{#3*}_{#2,#1}}
\DeclareDocumentCommand{\T}{O{n}O{r}}{\Ts_{#2}(#1)}
\DeclareDocumentCommand{\CT}{O{m}O{r}}{\CTs_{#2}(#1)}

\newtheorem{thm}{Theorem}[section]
\newtheorem{lem}[thm]{Lemma}
\newtheorem{cor}[thm]{Corollary}

\newtheorem{prop}[thm]{Proposition}

\theoremstyle{definition}
\newtheorem{defn}[thm]{Definition}
\newtheorem{qu}{Question}
\newtheorem{rem}[thm]{Remark}
\newtheorem{obs}[thm]{Observation}
\newtheorem{case}{Case}
\numberwithin{case}{thm}

\numberwithin{subcase}{case}

\newcommand{\A}{\mathcal{A}}
\newcommand{\cC}{\mathcal{C}}

\DeclareDocumentCommand{\Gell}{O{r}O{n}O{\ell}}{\mathcal{G}_{#2,#1}^{#3}}
\DeclareDocumentCommand{\Hl}{O{r}O{n}O{\ell}}{\mathcal{H}_{#2,#1}^{#3}}
\DeclareDocumentCommand{\Hlhat}{O{r}O{n}O{\ell}}{\widehat{\mathcal{H}}_{#2,#1}^{#3}}

\begin{document}

\pagestyle{plain}

\title{Maximizing the number of cliques in $K_{r+1}$-free graphs with forbidden properties}
\author{A. Dawkins and R. Kirsch}
\date{September 21, 2026}
\maketitle
\abstract{
Ferrero and Lesniak in 2018 found the maximum numbers of edges in $r$-partite non-Hamiltonian graphs. Recently we found the maximum numbers of edges and $t$-cliques in $K_{r+1}$-free graphs (1) that are not Hamiltonian or (2) that satisfy a condition on low-degree vertices related to P\'{o}sa's theorem. Applying theorem (2), here we extend theorem (1) from Hamiltonicity to other properties. We determine the maximum numbers of edges and $t$-cliques in $K_{r+1}$-free graphs that avoid one of the following properties: traceability, Hamiltonian-connectedness, $k$-path Hamiltonicity, $k$-Hamiltonicity, $k$-Hamiltonian-connectedness, and $k$-connectedness. We find all extremal graphs having the maximum numbers of edges. On the way, we prove upper bounds on the numbers of edges and $t$-cliques in $K_{r+1}$-free graphs that avoid an arbitrary stable property that holds for sufficiently large complete graphs.
}

\section{Introduction}\label{sec:intro}

A graph is \emph{Hamiltonian} if it contains a cycle visiting each vertex exactly once. Due to the central importance of Hamiltonicity in extremal graph theory and the computational difficulty of deciding whether a given graph is Hamiltonian, a great deal of research aims to determine best-possible conditions sufficient to guarantee Hamiltonicity, or equivalently to determine extremal values of various parameters among non-Hamiltonian graphs \cite{KO14}. See \cite{G03, G14} for surveys.

\subsection{Sufficient conditions for Hamiltonicity in various graph classes}

Sufficient conditions for Hamiltonicity have been studied extensively, first in all graphs, next in bipartite graphs, and then in $r$-partite graphs for $2 < r < n$. Of particular relevance here, best-possible edge density conditions sufficient to imply Hamiltonicity (equivalently, extremal theorems on the maximum number of edges in non-Hamiltonian graphs) were determined first by Ore \cite{OreEdgeCond} in 1961 for all graphs, then by Moon and Moser \cite{MoonMoser} in 1963 for bipartite graphs, by Adamus \cite{Adamus} in 2009 for balanced tripartite graphs, and by Ferrero and Lesniak \cite{FerreroLesniak} in 2018 for $r$-partite graphs with given part sizes. The development of minimum degree conditions \cite{Dirac, MoonMoser, CFGJL} and Ore-type ($\sigma_2$) conditions \cite{Ore, MoonMoser, ChenJacobson} for Hamiltonicity and edge density conditions for long cycles \cite{Erdos,
AdamusUnbalanced, Araujo} followed similar progressions from general graphs, to bipartite graphs, to $r$-partite graphs.

In \cite{DK26} we relaxed the $r$-partite condition to a $K_{r+1}$-free condition and determined a best-possible edge density condition for Hamiltonicity. We write $e(G)$ for the number of edges in a graph $G$ and $\T[n][r]$ for the $n$-vertex, $r$-partite Tur\'{a}n graph.

\begin{thm}[Theorem 5.4 in \cite{DK26}]\label{theorem:kr+1ham}
Let $G$ be an $n$-vertex, $K_{r+1}$-free graph where $r\ge 3$.
If $G$ is not Hamiltonian and \[n \ge \begin{cases}26 & \text{if }r=3\\ 11 & \text{if } r=4\\
2 & \text{if } r \ge 5\end{cases},\quad\text{ then }e(G) \leq e(\T[n-1][r]) + 1.\] Equality holds if and only if $G$ is $\T[n-1][r]$ plus a pendant edge or $G$ is $K_{3,1,1}$, $K_{6,2,2,1}$, $K_{4,1,1,1}$, or $K_{5,1,1,1,1}$, with the exceptional graphs occurring in the cases where $r \ge 5$ and $n =5$, or $(r,n)$ is $(4,11)$, $(5,7)$, or $(5,9)$, respectively.
\end{thm}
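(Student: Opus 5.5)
The plan is a Pósa/Chvátal-type degree argument combined with Turán-type counting. First, a non-Hamiltonian graph with many edges can be assumed \emph{edge-maximal non-Hamiltonian} within the class of $K_{r+1}$-free graphs. Adding an edge then either creates a Hamiltonian cycle or a $K_{r+1}$, which gives structural information.

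The first step applies Chvátal's theorem. If the degree sequence $d_1\le\dots\le d_n$ of $G$ satisfies Chvátal's condition, then $G$ is Hamiltonian. So there is $k<n/2$ such that at least $k$ vertices have degree at most $k$, and at least $n-k$ vertices have degree less than $n-k$. Let $S$ be a set of $k$ vertices of degree at most $k$. Then
\[
e(G)\le \sum_{v\in S}\deg v + e(G-S)\le k^2 + e(\T[n-k][r]),
\]
using Turán's theorem on the $K_{r+1}$-free graph $G-S$.

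The second step is to show that $f(k)=k^2+e(\T[n-k][r])$ is maximized at $k=1$ over $1\le k<n/2$, with strict inequality otherwise, in the stated ranges of $n$. The function $e(\T[m][r])$ is roughly $(1-1/r)m^2/2$. Hence $f$ is a convex-like function of $k$, and one only needs to compare the endpoints $k=1$ and $k\approx (n-1)/2$. At $k=(n-1)/2$ we get $f\approx n^2/4+(1-1/r)n^2/8$, while $f(1)\approx(1-1/r)n^2/2$. The first is at most the second roughly when $(1-1/r)\cdot\tfrac38\ge\tfrac14$, i.e.\ $r\ge3$, with room to spare for large $r$. For $r=3$ the margin is thin, so the floor effects in $e(\T[m][r])$ force a threshold like $n\ge26$. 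Similarly $r=4$ needs $n\ge11$. This step is a careful but routine computation with the exact Turán numbers. I expect the main technical burden to be handling the parity and rounding for small $n$, and identifying exactly when equality can occur at $k>1$.

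The third step characterizes equality. If $k=1$ is forced, then $G$ has a vertex $v$ of degree at most $1$ and $e(G-v)=e(\T[n-1][r])$. By uniqueness in Turán's theorem, $G-v=\T[n-1][r]$, and $v$ is a pendant vertex, since a degree-$0$ vertex would lose an edge. This graph is non-Hamiltonian and is $K_{r+1}$-free because $v$ has degree $1$. Equality at some $k\ge2$ requires:
\begin{itemize}
\item every vertex of $S$ has degree exactly $k$,
\item $S$ is independent or suitably arranged so that the sum counts each edge once,
\item $G-S=\T[n-k][r]$.
\end{itemize}
Chasing these conditions in the finitely many small cases where the computation of the second step allows equality gives the sporadic graphs $K_{3,1,1}$, $K_{6,2,2,1}$, $K_{4,1,1,1}$, and $K_{5,1,1,1,1}$. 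In each such graph $S$ is the large independent part and the other parts are too small to separate it. I would verify these cases by direct inspection: count edges and check non-Hamiltonicity via the independent set being larger than its neighborhood allows.

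The main obstacle is the small-$n$ boundary analysis. This means sharpening the bound $\sum_{v\in S}\deg v\le k^2$, which double counts edges inside $S$, to $k^2-e(S)$, and then showing exactly which $(r,n)$ permit a tie. That is where I would spend the effort, tabulating $f(k)$ for $r=3,4,5$ near the thresholds.
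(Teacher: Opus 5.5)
Your Step 2 is false for $r=3$, and the failure is not a rounding effect. Take $n$ odd and $k=(n-1)/2$. Then $f(k)\approx \frac{(n-1)^2}{4}+\frac{(n+1)^2}{12}=\frac{n^2-n+1}{3}$, while $f(1)=e(\T[n-1][3])+1\approx\frac{n^2-2n+4}{3}$. The quadratic terms cancel exactly, as you noticed, but the linear term has the wrong sign, so $f(k)-f(1)\approx (n-3)/3$ grows with $n$. Concretely, $n=27$ gives $f(13)=169+65=234>226=f(1)$, and $n=29$ gives $f(14)=196+75=271>262=f(1)$.

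So no threshold such as $n\ge 26$ makes $k=1$ the maximizer. Replacing $k^2$ by $k^2-e(S)$ does not help either, because the configuration that realizes your bound has $S$ independent. For even $n$ you only get ties (e.g.\ $n=26$, $k=12$ gives $144+65=209=f(1)$), so your equality analysis would still have to rule out $k=12$ by other means. The root problem is that the bound $\sum_{v\in S}\deg v+e(\T[n-k][r])$ never uses $K_{r+1}$-freeness across the cut between $S$ and $V(G)\setminus S$.

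That cut condition is the missing idea. A vertex of $S$ has a $K_r$-free neighborhood. For $r=3$, if $G-S$ is close to $\T[n-k][3]$, a vertex of $S$ can have only about $2(n-k)/3\approx n/3$ neighbors there, not $k\approx n/2$. You therefore have to trade the degrees of $S$ against the deficit $e(\T[n-k][3])-e(G-S)$. The P\'{o}sa-type bound \cref{thm:degcondsummary} (Theorem 4.1 of \cite{DK26}) supplies exactly this trade-off, and it is the source of the threshold $n\ge 26$ for $r=3$.

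The route the paper relies on shares your first step: \cref{thm:kstable} with $\ell=0$ gives $d_j\le j$ for some $j\le (n-1)/2$. It then applies \cref{thm:degcondsummary}, and uses \cref{lem:nrange} plus an analysis of $\Hl[r][n][0]$ (as in \cref{lem:exceptions}) for the sporadic graphs, with Ore's theorem covering $n\le r+1$.

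For $r\ge 4$, by contrast, your elementary argument does go through:
\begin{itemize}
\item $f$ is strictly convex in $k$, since $e(\T[m+1][r])-e(\T[m][r])=m-\floor{m/r}$ is nondecreasing in $m$.
\item The leading coefficient of $f(1)-f(\floor{(n-1)/2})$ is $(r-3)/(8r)>0$.
\item In the stated ranges, the only ties are $(r,n)=(4,11),(5,7),(5,9)$ and $n=5$ for $r\ge 5$. These give the listed exceptional graphs once you check that all vertices of $S$ must miss the same vertex (at $(4,11)$, the same singleton part of $\T[6][4]$), since the mixed configurations are Hamiltonian.
\end{itemize}
So as written, your proposal proves the statement for $r\ge 4$ but leaves the $r=3$ case unproved.
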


\subsection{Sufficient conditions for stable properties}

In the 1960s and 70s a series of papers proved sufficient conditions for properties such as traceability \cite{OreEdgeCond}, Hamiltonian-connectedness \cite{Berge, OreHConnected, Williamson}, $k$-path Hamiltonicity \cite{KronkGen, Kronk}, $k$-Hamiltonicity \cite{CKL70,Chvatal}, $k$-Hamiltonian-connectedness \cite{Lick}, and $k$-connectedness \cite{Boesch74,Bondy69,CKK68}. The definitions of all of these properties, which we call \emph{Hamiltonicity-like}, appear in \cref{subsec:edgedegreestable}. These properties all are \emph{stable}, the definition of which also appears in \cref{subsec:edgedegreestable}. Then Bondy and Chv\'{a}tal \cite{BC76} proved in general form sufficient conditions for stable properties that hold for sufficiently large complete graphs, principally including \cref{thm:kstable}, which is based on having few low-degree vertices or many high-degree vertices (called \emph{Chv\'atal-type} for their development in \cite{Chvatal}). A Chv\'{a}tal-type condition implies a $\sigma_2$ condition, which in turn implies edge density and minimum degree conditions. Bauer et al. \cite{BBHKNSWY15} surveyed best-possible Chv\'{a}tal-type degree conditions for many properties.
    
Reflecting the development of sufficient conditions for Hamiltonicity, sufficient conditions for a few properties related to Hamiltonicity in bipartite graphs were determined in the 1980s and 90s \cite{AdamusBalanced,AdamusUnbalanced,BaggaVarma,BV91,BS81,ES88}. 
Bondy and Chv\'{a}tal \cite{BC76} briefly mentioned that a notion of stability and closure could be developed for balanced bipartite graphs, and Amar et al. in 1995 presented five types of theorems for twelve different bistable properties in balanced bipartite graphs \cite[Table 1]{AmarEtAl}. 

The first main results of this paper are best-possible sufficient edge density conditions for several different properties in $K_{r+1}$-free graphs, given in Theorems \ref{theorem:kr+1all} and  \ref{theorem:kr+1kpath1}. We characterize the extremal graphs. As the extremal graphs are $r$-partite (and every $r$-partite graph is $K_{r+1}$-free), these results also imply new, best-possible sufficient edge density conditions for the same properties in $r$-partite graphs.

An important ingredient in the proofs is \cref{thm:stability}, which gives an edge density condition in $K_{r+1}$-free graphs sufficient to imply an arbitrary stable property that holds for sufficiently large complete graphs.

\subsection{Clique density conditions}
    
In the last several years, motivated by the study of generalized Tur\'{a}n problems (see \cite{GP25} for a survey), a different type of sufficient condition has emerged based on the number of copies of a subgraph $H$ which is not necessarily $K_2$, generalizing edge density conditions for Hamiltonicity and other properties. A \emph{$t$-clique} is a set of $t$ vertices all pairs of which are adjacent. F\"{u}redi, Kostochka, and Luo found the maximum numbers of edges and $t$-cliques in non-$k$-edge-Hamiltonian graphs \cite{FKL19} and the maximum number of copies of $H$ for an arbitrary graph $H$ in sufficiently large non-Hamiltonian graphs in \cite{FKL18}. Luo found the maximum number of $t$-cliques in graphs without long cycles \cite{Luo}. Berikkyzy, Hogenson, Kirsch, and McDonald found the maximum number of $t$-stars in graphs with any of five forbidden properties \cite{BHKM26}. In \cite{DK26} the present authors determined the following $t$-clique density condition for Hamiltonicity as a consequence of \cref{theorem:kr+1ham}. We write $\k(G)$ for the number of $t$-cliques in a graph $G$, and $\G$ for $\T[n-1][r]$ plus a pendant edge to a vertex in a larger part, which was extremal in \cref{theorem:kr+1ham}.

\begin{cor}[Corollary 6.4 in \cite{DK26}]
    Let $G$ be an $n$-vertex, $K_{r+1}$-free, non-Hamiltonian graph where $r \ge 3$. Let $t \ge 2$. If \[n \ge \begin{cases}26 & \text{if }r=3\\ 11 & \text{if } r=4\\ 
2 & \text{if } r \ge 5\end{cases},\quad\text{then } \k(G) \leq \k(\Gs[r][n][]).\]
Equality holds if $G \cong \Gs[r][n][]$, which is $\T[n-1][r]$ plus a pendant edge.
\end{cor}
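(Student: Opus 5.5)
We derive the corollary from \cref{theorem:kr+1ham} by combining it with the structure of non-Hamiltonian graphs near the edge bound. Let $G$ be an $n$-vertex, $K_{r+1}$-free, non-Hamiltonian graph in the stated range of $n$. The case $t=2$ is exactly \cref{theorem:kr+1ham}, since $e(\G)=e(\T[n-1][r])+1$. We may also assume $t\le r$: for $t>r$ both sides vanish because $G$ and $\G$ are $K_{r+1}$-free. So fix $3\le t\le r$.

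\textbf{Reduction.} Adding edges preserves non-Hamiltonicity only partially, so we pass to a maximal object. Replace $G$ by an edge-maximal $n$-vertex, $K_{r+1}$-free, non-Hamiltonian supergraph; this can only increase $\k(G)$.

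\textbf{Split by connectivity.}
\begin{enumerate}
\item Suppose $G$ has a cut vertex, or more generally a vertex of degree at most $1$, or is disconnected. The disconnected case is absorbed by adding an edge, which keeps $G$ non-Hamiltonian and $K_{r+1}$-free. Then $G$ decomposes into pieces sharing at most one vertex. Every $t$-clique with $t\ge 3$ lies inside one block, so
\[\k(G)\le \sum_B \k(B),\]
where each block $B$ is $K_{r+1}$-free.
\item Otherwise, $G$ is $2$-connected and non-Hamiltonian.
\end{enumerate}

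\textbf{Case 1 bound.} Use the generalized Tur\'an bound (Zykov): $\k(B)\le \k(\T[|B|][r])$. The quantity $\k(\T[m][r])$ is superadditive/convex in $m$ in the appropriate sense, so splitting $n+(\#\text{blocks}-1)$ vertices among blocks is optimized by one block of size $n-1$ and one edge. This gives $\k(G)\le \k(\T[n-1][r])=\k(\G)$, with equality for $\G$. The needed inequality is $\k(\T[a][r])+\k(\T[b][r])\le \k(\T[a+b-1][r])$ for $a,b\ge 2$. It follows because adding a vertex to $\T[m][r]$ increases $\k$ by the number of $(t-1)$-cliques in its neighborhood, which is nondecreasing in $m$.

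\textbf{Case 2 bound.} Use the stronger bound available from \cite{DK26}: the $2$-connected non-Hamiltonian case has edge count well below $e(\T[n-1][r])+1$, namely the P\'osa-type degree theorem (2) mentioned in the abstract. Non-Hamiltonicity forces, via Chv\'atal's condition, some $k<n/2$ with at least $k$ vertices of degree at most $k$. Deleting those $k$ vertices leaves a $K_{r+1}$-free graph on $n-k$ vertices, so
\[\k(G)\le \k(\T[n-k][r]) + k\cdot\binom{k}{t-1}.\]
A $t$-clique through a low-degree vertex uses $t-1$ of its at most $k$ neighbours. For $2\le k<n/2$ we compare this with $\k(\T[n-1][r])$. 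The loss from removing $k-1$ further vertices of the Tur\'an graph is roughly $(k-1)\k[t-1](\T[n][r])\cdot\frac{(r-t+1)}{r}$-ish, of order $n^{t-1}$. This dominates $k\binom{k}{t-1}$ for $k$ small relative to $n$. For $k$ near $n/2$, the low-degree vertices contribute few cliques and the Tur\'an term shrinks quadratically-in-$k$ faster.

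\textbf{Main obstacle.} The main obstacle is making this Case 2 comparison rigorous uniformly for all $t$, $r$ and the small thresholds ($n\ge 26,11,2$). I would first try to deduce it directly from the edge version: show that, among $K_{r+1}$-free graphs, $\k$ is controlled monotonically by the edge count through a Kruskal--Katona-type bound for $K_{r+1}$-free graphs. If that fails, I would do the explicit polynomial comparison, checking small cases by hand.
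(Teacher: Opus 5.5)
\textbf{Case 2 has a genuine gap.} This is the case that carries the content of the corollary, and the estimate you use there is false in part of the range you need. Your bound $\k(G)\le\k(\T[n-k][r])+k\binom{k}{t-1}$ has to stay below $\k(\T[n-1][r])$ for every $2\le k<n/2$. It fails already for $r=t=3$, $n=27$, $k=13$: you get $\k[3](\T[14][3])+13\binom{13}{2}=100+1014=1114$, while $\k[3](\T[26][3])=9\cdot 9\cdot 8=648$. So for $k$ near $n/2$ the low-degree vertices do not ``contribute few cliques'' under your count.

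Sharpening the count does not rescue the deletion argument. Neighbourhoods in a $K_{r+1}$-free graph are $K_r$-free, but even using this, for $r=t=3$, $n=29$, $k=14$ one gets $\k[3](\T[15][3])+14\cdot\floor{14^2/4}=125+686=811>810=10\cdot 9\cdot 9=\k[3](\T[28][3])$. Deleting the low-degree set and bounding the cliques through it vertex by vertex is inherently too lossy.

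An asymptotic comparison of this kind also cannot reach the small thresholds ($n\ge 2$ for $r\ge 5$, $n\ge 11$ for $r=4$). There, $2$-connected non-Hamiltonian graphs such as $K_{3,1,1}$ and $K_{6,2,2,1}$ even attain the maximum number of edges.

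There is also a smaller slip in Case 1. Iterating $\k(\T[a][r])+\k(\T[b][r])\le\k(\T[a+b-1][r])$ merges all blocks into one and only yields $\k(\T[n][r])$. What you need is the transfer inequality $\k(\T[a][r])+\k(\T[b][r])\le\k(\T[a+b-2][r])$ for $a\ge b\ge 2$ and $t\ge 3$. It does follow from your observation that the increments $\k(\T[j][r])-\k(\T[j-1][r])$ are nondecreasing.

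\textbf{The missing idea is the one you list only as a fallback, and it makes the whole case analysis unnecessary.} Frohmader's theorem is exactly a Kruskal--Katona theorem for $K_{r+1}$-free graphs. A $K_{r+1}$-free graph with at most $m$ edges has at most $\k(\CT)$ $t$-cliques for every $t\ge 2$, where $\CT$ is the $r$-partite colex Tur\'an graph on $m$ edges; this is \cref{thm:generalcolexturan}.

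Take $m=e(\T[n-1][r])+1$. Then:
\begin{itemize}
\item \cref{theorem:kr+1ham} gives $e(G)\le m$ in exactly the stated range of $n$.
\item The first $e(\T[n-1][r])$ edges in $r$-partite colex order span $\T[n-1][r]$, and the last one is a pendant edge. So $\CT=\G$ (\cref{obs:colexTuran} with $\ell=0$), which is $K_{r+1}$-free and non-Hamiltonian.
\end{itemize}
Hence $\k(G)\le\k(\G)$ for all $t\ge 2$ simultaneously, with equality for $G\cong\G$.

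This is the paper's route. All of the graph-theoretic work sits in the edge theorem and all of the clique counting sits in Frohmader's theorem. No edge-maximal reduction, block decomposition, or Chv\'atal-type deletion count is needed.
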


In \cref{cor:cliques} of the present paper we give an analogous $t$-clique density condition for each of the Hamiltonicity-like properties we consider.

The extremal graphs that maximize both the numbers of edges (Theorems \ref{theorem:kr+1all} and \ref{theorem:kr+1kpath1}) and the numbers of $t$-cliques (\cref{cor:cliques}) are modifications of Tur\'{a}n graphs. Specifically, in each part of \cref{thm:ore} and in the analogous theorems for the other properties (see \cref{thm:kstableedge}), as well as in the clique analogues in \cref{cor:ore}, the extremal graph contains a complete graph $K_{n-1}$ which when replaced by a Tur\'{a}n graph $\T[n-1][r]$ yields the extremal graph for the corresponding $K_{r+1}$-free problem.

\begin{table}[h]
\centering
		\begin{tabular}{ccccc}
			& Unrestricted & Non-Traceable & Non-Hamiltonian & Non-$k$-Hamiltonian\\
			Unrestricted & \tikzstyle{vx}=[inner sep=1.5pt,circle,fill=black,draw=black]
			\tikzstyle{edge}=[thick]
			\begin{tikzpicture}[scale=.6]
				\draw[fill=gray!20] (2,2) circle [radius=1.8];
			\end{tikzpicture} &
            \tikzstyle{vx}=[inner sep=1.5pt,circle,fill=black,draw=black]
			\tikzstyle{edge}=[thick]
			\begin{tikzpicture}[scale=.6]
                \draw[fill=gray!20] (2,2) circle [radius=1.8];
                \foreach \x/\y [count=\i] in {
                4.5/3}
				{
					\node[vx] (V\i) at (\x,\y) {};
				}
			\end{tikzpicture}
            &
            \tikzstyle{vx}=[inner sep=1.5pt,circle,fill=black,draw=black]
			\tikzstyle{edge}=[thick]
			\begin{tikzpicture}[scale=.6]
                \draw[fill=gray!20] (2,2) circle [radius=1.8];
				\foreach \x/\y [count=\i] in {
                3.35/2.65, 4.5/3}
				{
					\node[vx] (V\i) at (\x,\y) {};
				}
				
				\draw[edge] (V1)--(V2);
			\end{tikzpicture}& \tikzstyle{vx}=[inner sep=1.5pt,circle,fill=black,draw=black]
			\tikzstyle{edge}=[thick]
                \begin{tikzpicture}[scale=.6]
                    \draw[fill=gray!20] (2,2) circle [radius=1.8];
                    \draw[dashed][rotate around={45:(3,3)}] (2.15,3) ellipse [x radius=.75, y radius=1.35] node {$N(v)$};
				{
					\node[vx][label=above:$v$] (v) at (5,3) {};
				}
                \draw[edge] (v) -- (1.9,3.5);
                \draw[edge] (v) -- (3.41,1.5);
                
                \draw[rotate around={-25:(3.5,4.55)}] (4.75,3.75) arc
                    [
                    start angle=185,
                    end angle=275,
                    x radius=.65cm,
                    y radius =.65cm
                    ] node[below] {\small{$k+1$}};
                \end{tikzpicture}\\
			&& \cite{OreEdgeCond, ChakrabortiChen} & \cite{OreEdgeCond, ChakrabortiChen}& \cite{CKL70}, \cref{cor:ore}\ref{part:kHamclique}\\
            $K_{r+1}$-free & 
    \tikzstyle{vx}=[inner sep=1.5pt,circle,fill=black,draw=black]
			\tikzstyle{edge}=[thick]
			\begin{tikzpicture}[scale=.6]

             \draw[gray!20, line width=8pt] (3,3) -- (1,3);
                    \draw[gray!20, line width=8pt] (3,3) -- (3,1);
                    \draw[gray!20, line width=8pt] (3,3) -- (1,1);
                    \draw[gray!20, line width=8pt] (1,3) -- (3,1);
                    \draw[gray!20, line width=8pt] (1,3) -- (1,1);
                    \draw[gray!20, line width=8pt] (3,1) -- (1,1);
                    
				\draw[rotate around={-45:(1,3)}, fill=white] (1,3) ellipse [x radius=.5, y radius=.9];
				\draw[rotate around={45:(1,1)}, fill=white] (1,1) ellipse [x radius=.5, y radius=.9];
				\draw[rotate around={45:(3,3)}, fill=white] (3,3) ellipse [x radius=.5, y radius=.9];
				\draw[rotate around={-45:(3,1)}, fill=white] (3,1) ellipse [x radius=.5, y radius=.9];
                
			\end{tikzpicture}  & 
    \tikzstyle{vx}=[inner sep=1.5pt,circle,fill=black,draw=black]
			\tikzstyle{edge}=[thick]
			\begin{tikzpicture}[scale=.6]

             \draw[gray!20, line width=8pt] (3,3) -- (1,3);
                    \draw[gray!20, line width=8pt] (3,3) -- (3,1);
                    \draw[gray!20, line width=8pt] (3,3) -- (1,1);
                    \draw[gray!20, line width=8pt] (1,3) -- (3,1);
                    \draw[gray!20, line width=8pt] (1,3) -- (1,1);
                    \draw[gray!20, line width=8pt] (3,1) -- (1,1);
                    
				\draw[rotate around={-45:(1,3)}, fill=white] (1,3) ellipse [x radius=.5, y radius=.9];
				\draw[rotate around={45:(1,1)}, fill=white] (1,1) ellipse [x radius=.5, y radius=.9];
				\draw[rotate around={45:(3,3)}, fill=white] (3,3) ellipse [x radius=.5, y radius=.9];
				\draw[rotate around={-45:(3,1)}, fill=white] (3,1) ellipse [x radius=.5, y radius=.9];

				\foreach \x/\y [count=\i] in {4.5/3}
				{
					\node[vx] (V\i) at (\x,\y) {};
				}
			\end{tikzpicture} 
            & 
			    \tikzstyle{vx}=[inner sep=1.5pt,circle,fill=black,draw=black]
			\tikzstyle{edge}=[thick]
			\begin{tikzpicture}[scale=.6]

            \draw[gray!20, line width=8pt] (1,1) -- (3,1);
                    \draw[gray!20, line width=8pt] (3,3) -- (1,3);
                    \draw[gray!20, line width=8pt] (3,3) -- (3,1);
                    \draw[gray!20, line width=8pt] (3,3) -- (1,1);
                    \draw[gray!20, line width=8pt] (1,3) -- (3,1);
                    \draw[gray!20, line width=8pt] (1,3) -- (1,1);

				\draw[rotate around={-45:(1,3)}, fill=white] (1,3) ellipse [x radius=.5, y radius=.9];
				\draw[rotate around={45:(1,1)}, fill=white] (1,1) ellipse [x radius=.5, y radius=.9];
				\draw[rotate around={45:(3,3)}, fill=white] (3,3) ellipse [x radius=.5, y radius=.9];
				\draw[rotate around={-45:(3,1)}, fill=white] (3,1) ellipse [x radius=.5, y radius=.9];
				\foreach \x/\y [count=\i] in {
                3.35/2.65, 4.5/3}
				{
					\node[vx] (V\i) at (\x,\y) {};
				}
				
				\draw[edge] (V1)--(V2);
			\end{tikzpicture}
            &

            \tikzstyle{vx}=[inner sep=1.5pt,circle,fill=black,draw=black]
			\tikzstyle{edge}=[thick]
                \tikzstyle{tedge}=[draw=gray, very thick]
			\begin{tikzpicture}[scale=.6]

                    \draw[gray!20, line width=8pt] (1,1) -- (1,3);
                    \draw[gray!20, line width=8pt] (1,1) -- (3,1);
                    \draw[gray!20, line width=8pt] (3,3) -- (1,3);
                    \draw[gray!20, line width=8pt] (3,3) -- (3,1);
                    \draw[gray!20, line width=8pt] (3,3) -- (1,1);
                    \draw[gray!20, line width=8pt] (1,3) -- (3,1);
                    
				\draw[rotate around={-45:(1,3)}, fill=white] (1,3) ellipse [x radius=.5, y radius=.9];
				\draw[rotate around={45:(1,1)}, fill=white] (1,1) ellipse [x radius=.5, y radius=.9];
				\draw[rotate around={45:(3,3)}, fill=white] (3,3) ellipse [x radius=.5, y radius=.9];
				\draw[rotate around={-45:(3,1)}, fill=white] (3,1) ellipse [x radius=.5, y radius=.9];
                    \draw[dashed][rotate around={45:(3,3)}] (2.15,3) ellipse [x radius=.75, y radius=1.35] node {$N(v)$};
				{
					\node[vx][label=above:$v$] (v) at (5,3) {};
				}
                \draw[edge] (v) -- (1.9,3.5);
                \draw[edge] (v) -- (3.41,1.5);
                
                \draw[rotate around={-25:(3.5,4.55)}] (4.75,3.75) arc
                    [
                    start angle=185,
                    end angle=275,
                    x radius=.65cm,
                    y radius =.65cm
                    ] node[below] {\small{$k+1$}};
			\end{tikzpicture}\\
			& \cite{turan, Zykov} & \cref{theorem:kr+1all}\ref{part:trace},  & \cite{DK26}& \cref{theorem:kr+1all}\ref{part:kHam}, \\
            && \cref{cor:cliques}\ref{part:trace}&&\cref{cor:cliques}\ref{part:kHam}\\
		\end{tabular}
		\caption{Extremal graphs maximizing the numbers of edges and $t$-cliques in eight different graph classes. Gray circles represent complete graphs, and gray bands indicate complete multipartite graphs. In the $K_{r+1}$-free row, from left to right, the graphs are $\T[n][4]$, $\Gs[4][n][-1]$, $\Gs[4][n][0]$, and $\Gs[4][n][k]$.
        }\label{fig:graphs1}
	\end{table}

The extremal graphs are pictured in \cref{fig:graphs1}. As in \cite{DK26}, for $-1 \le \ell \le \ceil{(r-1)(n-1)/r}-1$ we write $\Gs[r][n][\ell]$ for the graph formed from $\T[n-1]$ by adding a new vertex $x$ such that the neighborhood of $x$ is isomorphic to $\T[\ell+1][r-1]$ and disjoint from a smallest part of the $\T[n-1]$. In \cref{sec:clique} we discuss the colexicographic order and recall from \cite{DK26} that $\Gs[r][n][\ell]$ is the colex Tur\'{a}n graph $\CT$, where $m = e(\T[n-1][r]) + \ell+1$. Notice that $\G = \Gs[r][n][0]$. The graphs $\Gs[r][n][-1]$, $\Gs[r][n][0]$, $\Gs[r][n][1]$, $\Gs[r][n][k]$, $\Gs[r][n][k]$, $\Gs[r][n][k-2]$, and $\Gs[r][n][k]$ are extremal for \cref{theorem:kr+1all}\ref{part:trace}--\ref{part:kconn} and \cref{theorem:kr+1kpath1}, respectively, and for the corresponding parts of \cref{cor:cliques}.

We give a $t$-clique density condition in $K_{r+1}$-free graphs for an arbitrary stable property that holds for sufficiently large complete graphs in \cref{thm:cliquestability}. If the graph $\Gs$ avoids the $(n+\ell)$-stable property, then it is extremal.

\subsection{Paper outline}

\cref{sec:prelim} provides preliminary definitions and theorems and deduces sufficient edge density and clique density conditions for stable properties in $K_{r+1}$-free graphs in Theorems \ref{thm:stability} and \ref{thm:cliquestability}. In \cref{sec:graphs} we determine which graphs avoid the Hamiltonicity-like properties in the three potentially extremal families of $n$-vertex, $K_{r+1}$-free graphs. \cref{sec:edge} pieces together the theorems of Sections \ref{sec:prelim} and \ref{sec:graphs} to determine the maximum numbers of edges and the extremal graphs which attain them for $r \ge 3$. The main results are Theorems \ref{theorem:kr+1all} and \ref{theorem:kr+1kpath1}. The $r=2$ case behaves differently and is addressed in \cref{sec:r2}. In \cref{sec:clique} we maximize the number of $t$-cliques for all $t \ge 2$ in \cref{cor:cliques}. We conclude with open problems in \cref{sec:open}.

\section{Preliminaries}\label{sec:prelim}

\subsection{Degree and edge conditions for stable properties}\label{subsec:edgedegreestable}

For $s>0$, a property $P$ defined on all $n$-vertex graphs is said to be \emph{$s$-stable} if whenever $G+uv$ has property $P$ (for distinct vertices $u$ and $v$ not adjacent in $G$) and $d_G(u)+d_G(v) \ge s$ then $G$ itself has property $P$. We address all of the following properties in this paper using the fact that they are $s$-stable for some $s=n+\ell$, listed in \cref{table}. 

A graph is \emph{traceable} if it contains a Hamiltonian path, \emph{Hamiltonian} if it contains a Hamiltonian cycle, and \emph{Hamiltonian-connected} if every pair of vertices is connected by a Hamiltonian path. 
An $n$-vertex graph is \emph{$k$-path Hamiltonian} for $0 \le k \le n-2$ if every path of length at most $k$ is contained in a Hamiltonian cycle. 
An $n$-vertex graph is \emph{$k$-Hamiltonian} for $0 \le k \le n-3$ if, for each set $S$ of at most $k$ vertices, $G-S$ is Hamiltonian. 
An $n$-vertex graph is \emph{$k$-Hamiltonian-connected} for $1 \le k \le n-2$ if, for each set $S$ of fewer than $k$ vertices, $G-S$ is Hamiltonian-connected. 
An $n$-vertex graph is \emph{$k$-connected} for $1 \le k \le n-1$ if, for each set $S$ of fewer than $k$ vertices, $G-S$ is connected.

The property of being Hamiltonian is $n$-stable \cite{OreEdgeCond}.
The property of $k$-path Hamiltonicity is $(n+k)$-stable by a proof similar to that of Theorem 1 in \cite{Kronk}. The property of $k$-Hamiltonian-connectedness is $(n+k)$-stable by a proof similar to that of Theorem 3 in \cite{Lick}, using the fact that Hamiltonian-connectedness is $(n+1)$-stable. For the stability of the other properties, see \cite{BC76} (and note that their definition of $k$-Hamilton-connectedness is the one in \cite{Berge}, so the $(n+1)$-stability of Hamiltonian-connectedness is given in \cref{table} under $0$-Hamilton-connectedness).

All of these properties hold for sufficiently large complete graphs. We write $n(P)$ for an integer such that each $K_n$ with $n \ge n(P)$ has property $P$, when such a value exists. 

\begin{table}[h]
\begin{center}
\begin{tabular}{l l l l}
    Property $P$ & $s$ & $\ell$ & $n(P)$\\\hline
    traceability & $n-1$ & $-1$ & $1$\\
    Hamiltonicity & $n$ & $0$ & $3$\\
    Hamiltonian-connectedness & $n+1$ & $1$ & $2$\\
    $k$-path Hamiltonicity & $n+k$ & $k$ & $k+3$\\
    $k$-Hamiltonicity & $n+k$ & $k$ & $k+3$\\
    $k$-Hamiltonian-connectedness & $n+k$ & $k$ & $k+2$\\
    $k$-connectedness & $n+k-2$ & $k-2$ & $k+1$
\end{tabular}
\caption{Each of the listed properties $P$ is $s$-stable for $s=n+\ell$ and holds for all complete graphs $K_n$ with $n \ge n(P)$.}
\label{table}
\end{center}
\end{table}

For all stable properties that hold for sufficiently large complete graphs, Chv\'atal-type degree conditions, edge extremal results, and minimum degree conditions are known.

\subsubsection{Chv\'atal-type degree conditions}

The following theorem can be read out of Bondy and Chv\'atal \cite{BC76}.

\begin{thm}[Bondy and Chv\'atal \cite{BC76}]\label{thm:kstable}
    Let $P$ be an $(n+\ell)$-stable property for which $n(P)$ exists. Let $G$ be a graph with degrees $d_1 \le \cdots \le d_n$ where $n\ge n(P)$. If $G$ does not have $P$, then there is an integer $1 \le i \le (n-1-\ell)/2$ for which $d_i \le i+\ell$ and $d_{n-i-\ell} \le n-i-1$. If $G = K_{i+\ell}\vee(I_i\cup K_{n-2i-\ell})$ does not have $P$, then this condition is best possible.
\end{thm}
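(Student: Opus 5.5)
The plan is to follow the Bondy--Chv\'atal closure argument. For an $(n+\ell)$-stable property $P$, define the \emph{$(n+\ell)$-closure} $C(G)$ of $G$: repeatedly join any nonadjacent pair $u,v$ whose current degrees satisfy $d(u)+d(v)\ge n+\ell$, until no such pair remains. The standard argument shows $C(G)$ is well defined, because an added edge only raises degrees, so any pair eligible at one stage stays eligible later. Applying stability once per added edge, in reverse order, gives: $G$ has $P$ if and only if $C(G)$ has $P$. Now suppose $G$ fails $P$. Since $n\ge n(P)$, $K_n$ has $P$, so $C(G)\ne K_n$. Moreover $C(G)$ contains $G$, so it suffices to extract the degree condition from $C(G)$ and transfer it back to $G$.

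In $H=C(G)$, every nonadjacent pair $u,v$ has $d_H(u)+d_H(v)\le n+\ell-1$. Among all nonadjacent pairs, choose one with $d_H(u)+d_H(v)$ maximum, where $d_H(u)\le d_H(v)$, and set $i=d_H(u)-\ell$. The standard counting then runs as follows.

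First, every vertex $w\ne v$ not adjacent to $v$ satisfies $d_H(w)\le d_H(u)$, by maximality. There are $n-1-d_H(v)\ge n-1-(n+\ell-1-d_H(u))=d_H(u)-\ell$ such vertices, and $u$ is one of them. So at least $i$ vertices have $H$-degree at most $i+\ell$.

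Second, every vertex $w\ne u$ not adjacent to $u$ has $d_H(w)\le d_H(v)\le n+\ell-1-d_H(u)=n-i-1$. Counting these together with $u$ gives $n-d_H(u)=n-i-\ell$ vertices with degree at most $n-i-1$.

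Since $d_G\le d_H$ pointwise, the sorted degree sequence of $G$ is dominated termwise by that of $H$. Hence $d_i\le i+\ell$ and $d_{n-i-\ell}\le n-i-1$ in $G$.

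The range of $i$ needs care, and I expect it to be the main obstacle, particularly $i\ge1$. From $d_H(u)\le d_H(v)$ and $d_H(u)+d_H(v)\le n+\ell-1$ we get $2d_H(u)\le n+\ell-1$, which gives $i\le (n-1-\ell)/2$. For $i\ge 1$ we need $d_H(u)\ge \ell+1$. If $d_H(u)\le \ell$, the first count is vacuous, and I would argue differently: either treat $i\le 0$ as forcing the condition at $i=1$ via the second count, or use that a graph with a vertex of degree at most $\ell$ fails the relevant properties anyway. The honest route is to note that Bondy--Chv\'atal state it with $1\le i$, and to check that $d_1\le d_H(u)\le 1+\ell$ together with $d_{n-1-\ell}\le n-2$ holds. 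The latter follows because the $n-d_H(u)\ge n-\ell$ non-neighbours of $u$, together with $u$, all have degree at most $n-1-d_H(u)+\ell\le n-2$ when $d_H(u)\ge \ell+1$; otherwise we bound them by $d_H(v)\le n-2$, since $v$ misses $u$. Taking $i=1$ therefore covers this case.

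For best possibility, note that $K_{i+\ell}\vee(I_i\cup K_{n-2i-\ell})$ has $i$ vertices of degree $i+\ell$, $n-2i-\ell$ vertices of degree $n-i-1$, and $i+\ell$ vertices of degree $n-1$. So it satisfies the degree condition at $i$ with equality. If it lacks $P$, then no weakening of the bounds at that $i$ can still force $P$.
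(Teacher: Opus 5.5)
Your proof is correct. It is the standard Bondy--Chv\'atal closure argument, which the paper does not reproduce but only cites from \cite{BC76}: take a non-edge $uv$ of the $(n+\ell)$-closure $H$ with $d_H(u)+d_H(v)$ maximal, set $i=d_H(u)-\ell$, count the non-neighbours of $v$ and of $u$, and pull the bounds back to $G$ by termwise domination of sorted degree sequences.

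Two small points.

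First, stability gives only that $C(G)$ having $P$ forces $G$ to have $P$. Your ``if and only if'' overclaims, since the converse would need monotonicity, but that direction is the only one you use.

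Second, in the case $d_H(u)\le \ell$ your fallback to $i=1$ tacitly needs $1\le (n-1-\ell)/2$, i.e.\ $n\ge \ell+3$. Your justification of the degree bounds there is fine: $d_H(w)\le d_H(v)\le n-2$ for $u$ and its non-neighbours. The problem is only the range of $i$. The condition $n\ge\ell+3$ does not follow from $n\ge n(P)$, and the statement genuinely fails without it. For example, Hamiltonian-connectedness has $n(P)=2$, yet the path on three vertices is not Hamiltonian-connected, while for $n=3$, $\ell=1$ the range of $i$ is empty. So you should state $n\ge \ell+3$ as an explicit hypothesis; with it, your argument is complete.
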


While \cref{thm:kstable} applies to $k$-connectedness, the resulting degree sequence condition is not best possible, and Bondy and Boesch gave a stronger theorem with different extremal graphs.

\begin{thm}[Bondy \cite{Bondy69}, Boesch \cite{Boesch74}]\label{thm:sconnconditions}
Let $G$ be a graph with degrees $d_1 \le \cdots \le d_n$ and $1 \le k \le n-1$. If $G$ is not $k$-connected, then for some $1 \le i \le (n-k+1)/2$ we have $d_i \le i+k-2$ and $d_{n-k+1} \le n-i-1$. The graph $G=K_{k-1}\vee(K_i\cup K_{n-k-i+1})$ shows that this condition is best possible.
\end{thm}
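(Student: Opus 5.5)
We prove the Bondy--Boesch condition of \cref{thm:sconnconditions} from a vertex cut, using only the definition of $k$-connectedness. Suppose $G$ is not $k$-connected. We may assume $G \neq K_n$: a complete graph $K_n$ with $n \ge k+1$ is $k$-connected, and the range $k \le n-1$ excludes the degenerate case. So there is a set $S$ with $\abs{S} \le k-1$ such that $G-S$ is disconnected. Since adding vertices to $S$ keeps $G - S$ disconnected as long as at least two components remain, we enlarge $S$ to size exactly $k-1$, moving vertices from the largest component until the complement still has two nonempty parts. Then we split $V(G)\setminus S$ into two nonempty sets $A$ and $B$ with no edges between them, take $A$ to be the smaller one, and set $i=\abs{A}$. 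Since $\abs{A}+\abs{B} = n-k+1$, this gives $1 \le i \le (n-k+1)/2$.

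Next we bound degrees. Each vertex of $A$ has all its neighbors in $A \cup S$, so its degree is at most $(i-1)+(k-1) = i+k-2$. There are $i$ such vertices, hence $d_i \le i+k-2$. Similarly, each vertex of $B$ has degree at most $\abs{B}-1+\abs{S} = n-i-1$. The vertices of $A \cup B$ number $n-k+1$, and each has degree at most $\max(i+k-2,\, n-i-1)$.

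We then compare the two bounds. From $i \le (n-k+1)/2$ we get $2i \le n-k+1$, so $i+k-2 \le n-i-1$. Thus all $n-k+1$ vertices of $A\cup B$ have degree at most $n-i-1$, which means $d_{n-k+1} \le n-i-1$. The main point needing care is this step: it requires $S$ to have size exactly $k-1$ so that $A\cup B$ contains $n-k+1$ vertices. The enlargement is possible because the original cut had $\abs{S}\le k-1$ and $G - S$ has at least two vertices. We can always keep one vertex in each side: $n \ge k+1$ means $n-(k-1)\ge 2$.

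For sharpness, take $G=K_{k-1}\vee(K_i\cup K_{n-k-i+1})$. Removing the $K_{k-1}$ disconnects it, so $G$ is not $k$-connected. Its degrees are $i+k-2$ (taken $i$ times), $n-i-1$ (taken $n-k-i+1$ times), and $n-1$ (taken $k-1$ times). These meet both inequalities with equality, so neither bound can be weakened. We also check that increasing any degree, in the Chvátal sense of the condition, would force $k$-connectedness; this follows by running the cut argument above backwards.
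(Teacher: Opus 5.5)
The paper gives no proof of this theorem. It is quoted from Bondy and Boesch and used only as a tool in the proof of \cref{Hfam}, so there is no in-paper argument to compare with. Your separator argument is the standard proof, and it is correct.

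\begin{itemize}
\item The padding step is sound. Fix $a$ and $b$ in different components of $G-S$, then move any $k-1-\abs{S}$ of the remaining $n-\abs{S}-2 \ge k-1-\abs{S}$ vertices into $S$; $a$ and $b$ stay separated.
\item The inequality $2i\le n-k+1$, equivalently $i+k-2\le n-i-1$, is exactly what lets all $n-k+1$ vertices of $A\cup B$ count toward $d_{n-k+1}\le n-i-1$.
\end{itemize}

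The only loose end is your last sentence. Your degree count $(i+k-2)^{i},(n-i-1)^{n-k-i+1},(n-1)^{k-1}$ already shows two things: the example is not $k$-connected, and it attains both inequalities with equality. That is all the statement claims.

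If you want sharpness in Chv\'{a}tal's monotone sense, the thing to verify is termwise domination. Every sequence with $d_i\le i+k-2$ and $d_{n-k+1}\le n-i-1$ is dominated termwise by the example's sequence. This is immediate, because $d_1,\dots,d_i\le i+k-2$, $d_{i+1},\dots,d_{n-k+1}\le n-i-1$, and every $d_j\le n-1$.

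Your assertion that ``increasing any degree would force $k$-connectedness'' is a different statement. It is an edge-maximality property of the example. It happens to be true: any separator of at most $k-1$ vertices must contain all $k-1$ universal vertices, so it equals the $K_{k-1}$, which no longer separates once an edge between $K_i$ and $K_{n-k-i+1}$ is added. However, ``running the cut argument backwards'' does not prove it. Replace that sentence with the domination remark, or drop it.
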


\subsubsection{Edge extremal results}\label{subsubsec:edge}
\cref{thm:kstable} implies an edge extremal theorem for each of the same properties.

\begin{thm}\label{thm:kstableedge}
    Let $P$ be an $(n+\ell)$-stable property for which $n(P)$ exists. If $G$ is an $n$-vertex graph that does not have property $P$, and $n \ge n(P)$, then $e(G) \le e(K_{n-1})+\ell+1$. If the graph consisting of $K_{n-1}$ plus a vertex of degree $\ell+1$ does not have property $P$, then the bound is tight. 
\end{thm}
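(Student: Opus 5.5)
We derive the bound from \cref{thm:kstable}. Suppose $G$ is an $n$-vertex graph without $P$ and $n \ge n(P)$. By \cref{thm:kstable} there is an integer $1 \le i \le (n-1-\ell)/2$ with $d_i \le i+\ell$ and $d_{n-i-\ell} \le n-i-1$. We bound $2e(G)=\sum_j d_j$ by splitting the vertices into three groups.
\begin{itemize}
\item The $i$ smallest degrees are each at most $i+\ell$.
\item The next $n-2i-\ell$ degrees, with indices up to $n-i-\ell$, are each at most $n-i-1$.
\item The remaining $i+\ell$ degrees are each at most $n-1$.
\end{itemize}
This gives
\[2e(G) \le f(i) := i(i+\ell) + (n-2i-\ell)(n-i-1) + (i+\ell)(n-1).\]
We then compare $f(i)$ with $2\binom{n-1}{2}+2(\ell+1) = (n-1)(n-2)+2\ell+2$. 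At $i=1$ we have
\[f(1) = (1+\ell) + (n-2-\ell)(n-2) + (1+\ell)(n-1) = (n-1)(n-2) + 2(\ell+1),\]
so the bound is exactly attained there.

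Next we show $f$ is maximized over the allowed range at $i=1$. Expanding,
\[f(i) = 3i^2 - (2n-1-2\ell)i + (\text{terms independent of } i).\]
This is a convex quadratic in $i$, so on the interval $[1,(n-1-\ell)/2]$ its maximum is at an endpoint. We then compare $f(1)$ with $f(i_{\max})$, where $i_{\max}=\lfloor (n-1-\ell)/2\rfloor$. At $i=(n-1-\ell)/2$ the middle group has size $n-2i-\ell=1$, and the bound is roughly $\tfrac34 n^2$ versus $n^2$. So for large $n$ we get $f(i_{\max}) \le f(1)$. Verifying this for every $n\ge n(P)$, and not just asymptotically, is the main obstacle, especially for small $n$ near $n(P)$ and for large $\ell$ relative to $n$.

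If the crude comparison fails for small $n$, I would sharpen the count. If $i \ge 2$, the $i$ low-degree vertices have degree at most $i+\ell$, which is much smaller than $n-1$ when $i$ is small. The convexity difference is
\[f(i)-f(1) = (i-1)\bigl(3(i+1) - (2n-1-2\ell)\bigr),\]
which is at most $0$ exactly when $3i+2+2\ell \le 2n$. Since $i \le (n-1-\ell)/2$, we have $3i \le \tfrac32(n-1-\ell)$, so it suffices that $\tfrac32(n-1-\ell)+2+2\ell \le 2n$, i.e.\ $\ell + 1 \le n$. This holds since $i\ge1$ forces $n \ge \ell+3$. Thus $2e(G)\le f(1)$, giving $e(G) \le e(K_{n-1})+\ell+1$.

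Tightness is immediate. The graph consisting of $K_{n-1}$ plus a vertex of degree $\ell+1$ has exactly $e(K_{n-1})+\ell+1$ edges. So whenever it lacks $P$, the bound is attained.
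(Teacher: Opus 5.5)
Your degree-sum route works, but the last computation has an arithmetic slip. Since $3(i+1)-(2n-1-2\ell)=3i+4+2\ell-2n$, you get $f(i)-f(1)=(i-1)(3i+4+2\ell-2n)$. So for $i\ge 2$ the condition needed is $3i+4+2\ell\le 2n$, not $3i+2+2\ell\le 2n$.

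Substituting $i\le (n-1-\ell)/2$ then requires $n\ge \ell+5$, which your remark that $n\ge \ell+3$ does not supply. The fix is one line: the difference is $0$ at $i=1$, and $2\le i\le (n-1-\ell)/2$ forces $n\ge \ell+5$. With this correction the factorization settles every $n$ exactly. The discussion of the asymptotic comparison and of ``sharpening the count'' for small $n$ is then unnecessary.

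The paper's proof avoids optimizing over $i$. It takes only the two vertices $x,y$ of degrees $d_i$ and $d_{n-i-\ell}$ and notes that $d(x)+d(y)\le (i+\ell)+(n-i-1)=n+\ell-1$, so $i$ cancels. It then uses $e(G)\le d(x)+d(y)+e(G-x-y)\le n+\ell-1+\binom{n-2}{2}=\binom{n-1}{2}+\ell+1$.

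Your approach costs a quadratic comparison but keeps more information. Equality $e(G)=\binom{n-1}{2}+\ell+1$ forces $f(i)=f(1)$ and equality within every group. That leaves two possibilities:
\begin{itemize}
\item $i=1$, with the degree sequence of $K_{n-1}$ plus a vertex of degree $\ell+1$, which determines that graph; or
\item $n=\ell+5$ and $i=2$, with degree sequence $(n-3,n-3,n-3,n-1,\dots,n-1)$, which forces $K_{3,1,\dots,1}$.
\end{itemize}
So your bookkeeping also identifies the only possible extremal graphs. It recovers exactly the exceptional graphs $K_{3,1}$, $K_{3,1,1}$, $K_{3,1,1,1}$ of \cref{thm:ore}.
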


\begin{proof}
    Let $G$ be a graph with degrees $d_1 \le \cdots \le d_n$. Suppose that $G$ does not have property $P$, so by \cref{thm:kstable} there is an integer $1 \le i \le (n-1-\ell)/2$ for which $d_i \le i+\ell$ and $d_{n-i-\ell} \le n-i-1$. Let $x$ and $y$ be vertices of degree $d_{i}$ and $d_{n-i-\ell}$. Notice 
       $d(x)+d(y) \le n+\ell-1$. Then \[e(G) \le d(x) + d(y) + e(G-x-y) \le  n+\ell-1+e(K_{n-2})=e(K_{n-1})+\ell+1.\qedhere\]
\end{proof}

\begin{rem}\label{rem:colexproperties}
For all properties $P$ in \cref{table} and their corresponding values of $\ell$ from the table, the graph consisting of $K_{n-1}$ plus a vertex of degree $\ell+1$ does not have property $P$, so the bound in \cref{thm:kstable} is tight.
\end{rem}

Ore additionally characterized all extremal graphs for traceability, Hamiltonicity, 
and Hamiltonian-connectedness.

\begin{thm}[Ore \cite{OreEdgeCond,OreHConnected}]\label{thm:ore} Let $G$ be a graph on $n$ vertices.
\begin{enumerate}[(a)]
    \item If $G$ is not traceable, then $e(G) \le e(K_{n-1})$. For $n \ge 2$, equality holds if and only if $G$ is a $K_{n-1}$ plus an isolated vertex, with one exceptional graph, $K_{3,1}$, when $n=4$.
    \item If $G$ is not Hamiltonian, then $e(G) \le e(K_{n-1}) +1$. For $n \ge 2$, equality holds if and only if $G$ is a $K_{n-1}$ plus a pendant edge, with one exceptional graph, $K_{3,1,1}$, when $n=5$.
    \item If $G$ is not Hamiltonian-connected, then $e(G) \le  e(K_{n-1}) +2$. For $n \ge 4$, equality holds if and only if $G$ is a $K_{n-1}$ plus a vertex of degree $2$, with one exceptional graph, $K_{3,1,1,1}$, when $n=6$.
\end{enumerate}
\end{thm}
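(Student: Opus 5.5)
The plan is to deduce all three parts from \cref{thm:kstable} using the $(n+\ell)$-stability values in \cref{table}, with $\ell=-1,0,1$ for traceability, Hamiltonicity and Hamiltonian-connectedness. The upper bounds are exactly \cref{thm:kstableedge}, so the work lies in characterizing the equality cases.

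Suppose $G$ lacks $P$ and $e(G)=e(K_{n-1})+\ell+1$. By \cref{thm:kstable} there is an $i$ with $1\le i\le (n-1-\ell)/2$, $d_i\le i+\ell$ and $d_{n-i-\ell}\le n-i-1$. I would sharpen the counting in the proof of \cref{thm:kstableedge}. The $i$ smallest-degree vertices form a set $A$ with degrees at most $i+\ell$. The next $n-2i-\ell$ vertices (indices $i+1,\dots,n-i-\ell$) have degree at most $n-i-1$. The remaining $i+\ell$ vertices have degree at most $n-1$. Summing,
\[2e(G)\le i(i+\ell)+(n-2i-\ell)(n-i-1)+(i+\ell)(n-1).\]
Compare this with $2e(K_{n-1})+2(\ell+1)=(n-1)(n-2)+2\ell+2$. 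The difference is a quadratic in $i$ which, for fixed $n$ and $\ell$, I expect to be strictly negative for $2\le i\le (n-1-\ell)/2$ except at the endpoint values and small $n$. Equality then forces $i=1$, or $i$ at its maximum with $n$ small, namely $n=4,5,6$ for the three parts respectively. This is exactly where the exceptional graphs $K_{3,1}$, $K_{3,1,1}$ and $K_{3,1,1,1}$ appear. They are the graphs $K_{i+\ell}\vee(I_i\cup K_{n-2i-\ell})$ with $i=(n-1-\ell)/2$, that is, $i=2$ at these values of $n$.

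In the generic case $i=1$ there is a vertex $x$ with $d(x)\le \ell+1$. Then $e(G-x)\ge e(K_{n-1})$, so $G-x=K_{n-1}$ and $d(x)=\ell+1$, giving the claimed graph. In the boundary case, equality in every degree bound pins down the structure. There are $i$ vertices of degree $i+\ell$, and $i+\ell$ vertices of degree $n-1$, which are therefore adjacent to everything. This leaves $A$ with no room for internal edges, so $A$ is independent. The middle block has degree $n-i-1$, so it is complete to itself and to the dominating set. This yields $K_{i+\ell}\vee(I_i\cup K_{n-2i-\ell})$. I would then compare its edge count with $e(K_{n-1})+\ell+1$ and keep only those $(n,i)$ where they agree. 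The conditions $n\ge 2$ (respectively $n\ge4$) exclude degenerate cases. One must also verify that each listed graph indeed lacks $P$: the independent set $A$ of size $i$ with $|N(A)|=i+\ell$ obstructs $P$ by a standard toughness or path-cover count.

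The main obstacle is the inequality step. I need to show that the degree-sum bound is strictly below the target for all intermediate $i$, and to identify precisely which $(n,i)$ attain equality. Since the degree-sum bound is convex in $i$, I would check it only at $i=2$ and $i=(n-1-\ell)/2$ and handle small $n$ by hand.
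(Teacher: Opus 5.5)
The paper gives no proof of this theorem; it is quoted from Ore. So your argument is an independent derivation, not a reconstruction of the paper's proof, and it goes through.

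The inequality you flag as the main obstacle factors exactly. Your degree-sum bound minus $(n-1)(n-2)+2\ell+2$ equals $(i-1)(3i-2n+2\ell+4)$. This is $0$ at $i=1$. On $2\le i\le (n-1-\ell)/2$ it is negative, except when $n=\ell+5$ and $i=2$, where it is $0$. So the only non-generic equality case is $K_{\ell+2}\vee I_3$, namely $K_{3,1}$, $K_{3,1,1}$ and $K_{3,1,1,1}$, as you predict. Your structural reading of the tight degree blocks is also correct.

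Your route sharpens the paper's proof of \cref{thm:kstableedge}. That proof charges only the two vertices of degrees $d_i$ and $d_{n-i-\ell}$ and bounds the rest by $e(K_{n-2})$. This gives the bound but not the equality cases. Summing over all three degree blocks instead gives the bound and the characterization in one computation, uniformly in $\ell$.

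This buys generality. For any $(n+\ell)$-stable property with $n\ge n(P)$, an edge-extremal graph lacking the property is either $K_{n-1}$ plus a vertex of degree $\ell+1$, or, only when $n=\ell+5$, $K_{\ell+2}\vee I_3$. The cost is reliance on the Bondy--Chv\'atal theorem (\cref{thm:kstable}) rather than Ore's direct path and cycle arguments.

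You should also state a few loose ends explicitly:
\begin{enumerate}[(i)]
\item In (b), the case $n=2<n(P)$ must be done by hand.
\item For the converse, $K_{n-1}$ plus a vertex of degree $\ell+1$ lacks $P$; this is where $n\ge 4$ is needed in (c), since $K_3$ is Hamiltonian-connected.
\item Also for the converse, $K_{\ell+2}\vee I_3$ lacks $P$: deleting its $\ell+2$ dominating vertices leaves $3$ components. That is too many for a Hamiltonian path, a Hamiltonian cycle, or a Hamiltonian path joining two of the deleted vertices, respectively.
\end{enumerate}
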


\subsubsection{Minimum degree conditions}

\cref{thm:kstable} also implies the following minimum degree conditions for the same properties, which includes Dirac's minimum degree condition for $P=$ Hamiltonicity \cite{Dirac}.

\begin{thm}\label{thm:stablemindeg}
    Let $P$ be an $(n+\ell)$-stable property for which $n(P)$ exists, and let $G$ be a graph on $n \ge n(P)$ vertices. If $\delta(G) \ge (n+\ell)/2$, then $G$ has property $P$.
\end{thm}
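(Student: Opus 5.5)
We argue by contraposition, applying \cref{thm:kstable} just as in the proof of \cref{thm:kstableedge}. Suppose $G$ has $n \ge n(P)$ vertices and $\delta(G) \ge (n+\ell)/2$, but $G$ does not have property $P$. Let the degrees be $d_1 \le \cdots \le d_n$. By \cref{thm:kstable} there is an integer $i$ with $1 \le i \le (n-1-\ell)/2$ such that $d_i \le i+\ell$ and $d_{n-i-\ell} \le n-i-1$. The index $n-i-\ell$ is well defined, since $i \le (n-1-\ell)/2$ gives $n-i-\ell \ge i+1 \ge 2$.

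Next we combine the two inequalities with the minimum degree hypothesis. Both $d_i$ and $d_{n-i-\ell}$ are at least $\delta(G) \ge (n+\ell)/2$. The first inequality then gives
\[(n+\ell)/2 \le d_i \le i+\ell,\]
so $i \ge (n-\ell)/2$. The upper bound on $i$ gives $i \le (n-1-\ell)/2 < (n-\ell)/2$, which is a contradiction. So $G$ has property $P$.

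The second inequality is not needed. Still, the same approach works through the degree sum: choose vertices $x$ and $y$ of degrees $d_i$ and $d_{n-i-\ell}$. As in the proof of \cref{thm:kstableedge}, $d(x)+d(y) \le n+\ell-1$. On the other hand, $d(x)+d(y) \ge 2\delta(G) \ge n+\ell$, which is again a contradiction.

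The only point needing care is that $x$ and $y$ be distinct vertices, so that two different degrees are being summed. This holds because $n-i-\ell > i$. Apart from that, the argument reduces directly to \cref{thm:kstable}.
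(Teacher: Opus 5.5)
Your proof is correct and follows the paper's route: the paper derives this statement directly from \cref{thm:kstable} without further detail, and your observation that $\delta(G) \ge (n+\ell)/2$ together with $d_i \le i+\ell$ forces $i \ge (n-\ell)/2$, contradicting $i \le (n-1-\ell)/2$, is exactly the step needed. (The degree-sum variant, and with it the concern about $x$ and $y$ being distinct, is superfluous, since $d_i + d_{n-i-\ell} \ge 2\delta(G)$ holds for the degree sequence regardless.)
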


\subsection{Hamiltonicity-like properties in complete multipartite graphs}\label{subsec:CMP}

We use the following three propositions, characterizing which complete multipartite graphs are $k$-Hamiltonian-connected, $k$-Hamiltonian, and traceable, respectively.

\begin{prop}\label{prop:partitekHamconn}
    Let $G$ be a complete multipartite graph on $n$ vertices whose largest part has size $m$. Let $1 \le k \le n-3$. Then $G$ is $k$-Hamiltonian-connected if and only if $m \le (n-k)/2$.
\end{prop}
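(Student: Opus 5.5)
The plan is to prove each direction separately, reducing sufficiency to the minimum degree condition of \cref{thm:stablemindeg}, and proving necessity by exhibiting a bad deletion set $S$ and a bad pair of endpoints.

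\emph{Sufficiency.} Suppose $m \le (n-k)/2$, and let $S$ be any set of fewer than $k$ vertices. Then $H = G-S$ is a complete multipartite graph on $N = n-|S| \ge n-k+1$ vertices. Each part of $H$ has size at most $m \le (n-k)/2 \le (N-1)/2$. A vertex of $H$ is adjacent to every vertex outside its own part, so
\[\delta(H) \ge N - m \ge N - \tfrac{N-1}{2} = \tfrac{N+1}{2}.\]
Hamiltonian-connectedness is $(N+1)$-stable (so $\ell=1$) and has $n(P)=2$ by \cref{table}. Since $N \ge n-k+1 \ge 4$, \cref{thm:stablemindeg} applies and shows $H$ is Hamiltonian-connected. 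As $S$ was arbitrary, $G$ is $k$-Hamiltonian-connected.

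\emph{Necessity.} Suppose $m > (n-k)/2$, and let $A$ be a largest part. The idea is to delete vertices outside $A$ so that $A$ becomes too large for a Hamiltonian path with a suitable pair of endpoints.

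\emph{Main case: $n-m \ge k$.} Choose $S$ to be $k-1$ vertices outside $A$. Then $H = G-S$ has $N = n-k+1$ vertices, contains all of $A$, and has at least one vertex $v \notin A$. Since $2m > n-k$, we have $m \ge N/2$. Take endpoints $u \in A$ and $v \notin A$. In any $u$--$v$ Hamiltonian path $p_1\cdots p_N$ of $H$, the vertices of $A$ form an independent set, so no two are consecutive, and $p_N \notin A$. Hence $A$ occupies at most $\lfloor N/2 \rfloor$ positions among $p_1,\dots,p_{N-1}$. Equality $|A| = N/2$ would force $N$ even and $A = \{p_1,p_3,\dots,p_{N-1}\}$. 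That makes the remaining vertices $p_2,p_4,\dots,p_N$ number exactly $N/2$, so $v$ could not be the only vertex outside $A$ at the end. I need to double-check this parity count carefully. If this boundary case is delicate, I would instead pick both endpoints outside $A$ when $H$ has two such vertices. Then $A$ lies among the interior positions and so has at most $\lfloor (N-1)/2 \rfloor < N/2$ vertices, which contradicts $m \ge N/2$ cleanly.

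\emph{Degenerate case: $n-m < k$.} Here few vertices lie outside $A$, and I would delete all of them but one or two. A Hamiltonian path in the remainder then needs far more non-$A$ vertices than remain, since $A$ still has at least $m \ge 3$ vertices because $n-m \le k-1 \le n-4$. Again no Hamiltonian path exists between suitable endpoints.

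I expect the main obstacle to be this bookkeeping in the necessity direction. The task is to choose $S$ and the endpoints so that the parity count gives a strict contradiction in every case, including $2m = n-k+1$ exactly and the case where too few vertices lie outside $A$. Sufficiency is immediate from \cref{thm:stablemindeg}.
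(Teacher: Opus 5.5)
Your sufficiency direction is correct and differs only slightly from the paper's. The paper applies \cref{thm:stablemindeg} once to $G$ with $P$ equal to $k$-Hamiltonian-connectedness, which is $(n+k)$-stable, using $\delta(G)=n-m\ge (n+k)/2$. You instead apply it to each $G-S$ with $P$ equal to Hamiltonian-connectedness. Your route needs only the classical $(n+1)$-stability, not the $(n+k)$-stability that the paper only sketches via Lick's argument.

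In the necessity direction, your primary choice of endpoints $u\in A$, $v\notin A$ fails in the boundary case $2m=n-k+1$, i.e.\ $m=N/2$. In that case, put $A$ in the odd positions $p_1,p_3,\dots,p_{N-1}$ and the $N/2$ other vertices in the even positions, ending at $v$. This sequence \emph{is} a Hamiltonian $u$--$v$ path of the complete multipartite graph $H$. So your observation that $v$ is not the only vertex outside $A$ gives no contradiction. For example, take $G=K_{3,2,2}$ and $k=2$. Deleting one vertex outside $A$ gives $H=K_{3,2,1}$, in which every $u\in A$ and $v\notin A$ are joined by a Hamiltonian path.

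You therefore must commit to your fallback of taking both endpoints outside $A$, which is exactly the paper's argument. Then $A$ lies in the $N-2$ interior positions with no two consecutive, so $m\le\lfloor (N-1)/2\rfloor<N/2\le m$, a contradiction. This requires two vertices of $H$ outside $A$, so the correct case split is:
\begin{enumerate}[(i)]
\item $n-m\ge k+1$: delete $k-1$ vertices outside $A$ and take both endpoints outside $A$.
\item $n-m=k$: here $H=K_{m,1}$ with $m=n-k\ge 3$, which is not traceable. In this case $m=N-1>N/2$, so your $u\in A$ argument does work.
\item $n-m\le k-1$: delete every vertex outside $A$, leaving an edgeless graph on $m\ge n-k+1\ge 4$ vertices.
\end{enumerate}

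With this repair your proof is complete. Your explicit treatment of the cases $n-m\le k$ also fills a point the paper's proof takes for granted. The paper deletes $k-1$ vertices outside the large part and then uses two endpoints outside it, without checking that such vertices exist.
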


\begin{proof}
    Suppose $G$ has a part of size greater than $(n-k)/2$. Let $G'$ be the result of deleting some set of $k-1$ vertices from the other parts. Then $G'$ is a complete multipartite graph with one part $P$ of size at least $(n-k+1)/2$ and the other parts having at most $(n-k+1)/2$ vertices. A path of length $\ell$ between two vertices outside $P$ contains at most $\ell/2$ vertices in $P$. A Hamiltonian path between these vertices in $G'$ then is impossible because it would have length $n-k$ but contain more than $(n-k)/2$ vertices in $P$. 
    Since $G'$ is not Hamiltonian-connected, $G$ is not $k$-Hamiltonian-connected.

    Conversely, suppose the largest part of $G$ has size $m \le (n-k)/2$. The degree of a vertex in a part of size $p$ is $n-p$, so the minimum degree of $G$ is $\delta(G)=n-m\ge(n+k)/2$. By \cref{thm:stablemindeg} with $P=k$-Hamiltonian-connectedness and \cref{table}, $G$ is $k$-Hamiltonian-connected.
\end{proof}

\begin{prop}\label{prop:partitekHam}
    Let $G$ be a complete multipartite graph on $n$ vertices whose largest part has size $m$. Let $0 \le k \le n-3$. Then $G$ is $k$-Hamiltonian if and only if $m \le (n-k)/2$.
\end{prop}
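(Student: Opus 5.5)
The proof will mirror that of \cref{prop:partitekHamconn}: a direct counting obstruction for necessity and the minimum degree condition for sufficiency.

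\emph{Necessity.} Suppose some part $P$ of $G$ has size $m > (n-k)/2$. We delete a set $S$ of $k$ vertices lying outside $P$; this is possible when $n-m \ge k$. Then $G-S$ has $n-k$ vertices, and $P$ is an independent set in it of size $m > (n-k)/2$. A Hamiltonian cycle in $G-S$ has length $n-k$, and its vertices in $P$ are pairwise nonconsecutive on the cycle, so it contains at most $(n-k)/2$ vertices of $P$. Hence $G-S$ is not Hamiltonian, and $G$ is not $k$-Hamiltonian.

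If instead $n-m<k$, we delete all $n-m$ vertices outside $P$, together with enough vertices of $P$ to delete $\min(k, n-3)$ vertices in total. What remains is an edgeless graph on at least $3$ vertices, which is not Hamiltonian. This is the small degenerate case, and it is the only point where the hypothesis $k \le n-3$ is needed.

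\emph{Sufficiency.} Suppose $m \le (n-k)/2$. A vertex in a part of size $p$ has degree $n-p$, so $\delta(G) = n-m \ge (n+k)/2$. By \cref{table}, $k$-Hamiltonicity is $(n+k)$-stable with $n(P)=k+3$, and $n \ge k+3$ holds by hypothesis. \Cref{thm:stablemindeg} therefore gives that $G$ is $k$-Hamiltonian.

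The only real obstacle is the degenerate case in the necessity direction where the vertices outside the large part number fewer than $k$. It is handled by deleting everything outside $P$ and leaving an independent set of at least $3$ vertices, as described above.
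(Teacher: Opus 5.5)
Your proof is correct and follows the paper's argument. For necessity, delete $k$ vertices outside the large part and note that its vertices must be pairwise nonconsecutive on any Hamiltonian cycle; for sufficiency, apply \cref{thm:stablemindeg} with $\delta(G)=n-m\ge(n+k)/2$. Your separate treatment of the case $n-m<k$ is a worthwhile addition, since the paper's phrase ``deleting some set of $k$ vertices from the other parts'' tacitly assumes $n-m\ge k$. One small correction: the hypothesis $k\le n-3$ is not used only in that degenerate case, because your sufficiency step also needs it to get $n\ge n(P)=k+3$.
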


\begin{proof}
   Suppose $G$ has a part of size greater than $(n-k)/2$. Let $G'$ be the result of deleting some set of $k$ vertices from the other parts. Then $G'$ is a complete multipartite graph with one part of size at least $(n-k+1)/2$ and the other parts having at most $(n-k-1)/2$ vertices. As any cycle in $G'$ is a cyclic ordering of vertices of $G'$ in which the vertices in the largest part are non-consecutive, we have $G'$ is not Hamiltonian and so, by definition, $G$ is not $k$-Hamiltonian.

   Conversely, suppose the largest part of $G$ has size $m \le (n-k)/2$. The degree of a vertex in a part of size $p$ is $n-p$, so the minimum degree of $G$ is $\delta(G)=n-m\ge(n+k)/2$. This minimum degree condition, by \cref{thm:stablemindeg} with $P=k$-Hamiltonicity and \cref{table}, implies that $G$ is $k$-Hamiltonian.
\end{proof}

\begin{prop}\label{prop:partitetraceable}
    Let $G$ be a complete multipartite graph on $n$ vertices whose largest part has size $m$. Then $G$ is traceable if and only if $m \le (n+1)/2$.
\end{prop}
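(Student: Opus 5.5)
We follow the same two-direction template as in \cref{prop:partitekHamconn} and \cref{prop:partitekHam}: an obstruction argument for necessity, and the minimum degree criterion \cref{thm:stablemindeg} for sufficiency. Since traceability is $(n-1)$-stable ($\ell=-1$ in \cref{table}), \cref{thm:stablemindeg} only asks for minimum degree $(n-1)/2$. This matches the threshold $m \le (n+1)/2$, because $\delta(G) = n-m$.

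For necessity, suppose the largest part $A$ has $m > (n+1)/2$ vertices, so $m \ge (n+2)/2$ and $n - m \le (n-2)/2$. Vertices of $A$ are pairwise nonadjacent, so in any Hamiltonian path $v_1 v_2 \cdots v_n$ no two consecutive vertices lie in $A$. Among $n$ positions on a path, the largest set with no two consecutive positions has size $\lceil n/2 \rceil \le (n+1)/2$. Hence $m \le (n+1)/2$, a contradiction, so $G$ is not traceable. Equivalently, the $n-m$ vertices outside $A$ can separate at most $n-m+1$ vertices of $A$, giving $m \le n-m+1$.

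For sufficiency, suppose $m \le (n+1)/2$. The degree of a vertex in a part of size $p$ is $n-p$, so $\delta(G) = n-m \ge (n-1)/2 = (n+\ell)/2$ with $\ell=-1$. Since $n(P)=1$ for traceability, \cref{thm:stablemindeg} yields that $G$ is traceable.

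There is no real obstacle. The one care point is checking that \cref{thm:stablemindeg} applies for every $n \ge 1$ with $n(P)=1$. If one prefers to avoid relying on it for tiny $n$, a direct construction also works: list the vertices of $A$ first, then the remaining vertices sorted by part, and interleave them into the path alternately. Alternatively, add a new vertex joined to everything: the result is a complete multipartite graph on $n+1$ vertices with largest part $m \le (n+1)/2$, which is Hamiltonian by \cref{prop:partitekHam} with $k=0$ (when $n+1 \ge 3$). Deleting the new vertex from the Hamiltonian cycle gives a Hamiltonian path in $G$.
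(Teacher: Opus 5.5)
Your proof is correct and follows the paper's: necessity because the vertices of the largest part cannot be consecutive on a path, and sufficiency because $\delta(G)=n-m\ge (n-1)/2$, so \cref{thm:stablemindeg} applies with $\ell=-1$ and $n(P)=1$. Your two alternative constructions, the interleaving and adding a universal vertex before invoking \cref{prop:partitekHam} with $k=0$, are also valid but not needed.
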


\begin{proof}
    If $G$ has a part of size greater than $(n+1)/2$, then $G$ is not traceable because any path in $G$ is an ordering of vertices of $G$ in which the vertices in the largest part are non-consecutive.

    Conversely, suppose the largest part of $G$ has size $m \le (n+1)/2$. The degree of a vertex in a part of size $p$ is $n-p$, so the minimum degree of $G$ is $\delta(G) = n-m \ge (n-1)/2$. By \cref{thm:stablemindeg} with $P=$ traceability and \cref{table}, $G$ is traceable.
\end{proof}

The following lemma is used to prove \cref{prop:kpathham} for characterizing the extremal graphs that are not $k$-path Hamiltonian.

\begin{lem}\label{lem:partite_kpath}
    Let $G$ be a complete multipartite graph on $n$ vertices whose largest part has size $m$. If $m > (n-k)/2$ and the remaining $r-1$ parts of $G$ contain a path of length $k$ then $G$ is not $k$-path Hamiltonian.
\end{lem}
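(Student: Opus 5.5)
We exhibit a path of length $k$ that lies in no Hamiltonian cycle of $G$. Let $A$ be a largest part, so $|A| = m > (n-k)/2$. By hypothesis the union of the other $r-1$ parts contains a path $Q$ of length $k$, and hence with $k+1$ vertices, all outside $A$. Since $Q$ has length at most $k$, it suffices to show that no Hamiltonian cycle $C$ of $G$ contains $Q$. This gives the lemma directly from the definition of $k$-path Hamiltonicity.

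Suppose for contradiction that $C$ is a Hamiltonian cycle containing $Q$. Contract $Q$ within $C$ to a single vertex $q$, obtaining a cyclic ordering of $q$ together with the $n-k-1$ vertices not on $Q$. This ordering has $n-k$ entries in total. Because $A$ is an independent set, no two vertices of $A$ are consecutive in $C$. None of the vertices of $Q$ lie in $A$, so after contraction the vertices of $A$ are still pairwise non-consecutive in the cyclic sequence of length $n-k$. The neighbors of $q$ in the contracted cycle are the two $C$-neighbors of the endpoints of $Q$ outside $Q$, and these may lie in $A$; that is allowed and does not affect the count.

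In a cyclic sequence of length $n-k$, an independent set of pairwise non-consecutive positions has at most $\lfloor (n-k)/2\rfloor$ elements. Hence $m \le (n-k)/2$, contradicting $m > (n-k)/2$. Therefore no Hamiltonian cycle contains $Q$, and $G$ is not $k$-path Hamiltonian.

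The only point needing care is that contracting $Q$ preserves the non-consecutiveness of $A$-vertices. This holds because $Q$ avoids $A$, so any two $A$-vertices adjacent in the contracted sequence were already adjacent in $C$. The degenerate case $n-k \le 2$ is excluded since $k \le n-2$; for $n-k=2$ the bound $m \le 1$ still follows from the same count.
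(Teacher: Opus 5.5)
Your proof is correct and is essentially the paper's argument. The paper deletes $V(Q)$ and observes that $C - V(Q)$ would be a Hamiltonian path of the complete multipartite graph $G - V(Q)$ on $n-k-1$ vertices, which is non-traceable by \cref{prop:partitetraceable} because its largest part exceeds $(n-k)/2$; you instead contract $Q$ and carry out the same non-consecutiveness count directly on the cyclic sequence of length $n-k$. (One small slip: $k \le n-2$ rules out only $n-k \le 1$, not $n-k=2$, but this is harmless, since $A$ misses $Q$, so $m \le n-k-1$, and with $m > (n-k)/2$ this already forces $n-k \ge 3$.)
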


\begin{proof}
    Suppose $G$ has a largest part of size greater than $(n-k)/2$. Let $G'$ be the result of deleting the vertices of a path $P$ of length $k$ from the remaining $r-1$ parts of $G$. Then $G'$ is a complete multipartite graph on $n-k-1$ vertices with largest part of size at least $(n-k+1)/2$. 
    By \cref{prop:partitetraceable}, 
    $G'$ is not traceable. Thus there is no Hamiltonian path in $G'$, so 
    no Hamiltonian cycle in $G$ containing $P$. Therefore $G$ is not $k$-path Hamiltonian.
\end{proof}

\subsection{Extremal graphs and their numbers of edges}

In \cite{DK26} we defined two families of potential extremal graphs, $\Gell$ and $\Hl$. Here we recall these definitions and define a third family, $\mathcal{J}^\ell_{n,r}$.

\begin{defn}[$\Gell$, $\Hl$, and $\mathcal{J}^\ell_{n,r}$]\label{def:graphs}
Let $\Gell$ be the set of $n$-vertex, $K_{r+1}$-free graphs consisting of $\T[n-1][r]$ plus a vertex of degree $\ell+1$. Notice that $\G$ is one of the two graphs in $\Gell[r][n][0]$, and $\Gs \in \Gell$ provided that $\ell+1 \le \ceil{(r-1)(n-1)/r}$, so $\Gell$ is nonempty if $n \ge (1+1/(r-1))(\ell+1)+1$.

Let $\Hl$ be the set of $n$-vertex, $K_{r+1}$-free graphs consisting of $\T[(n+1+\ell)/2][r]$ plus an independent set of $(n-1-\ell)/2$ vertices of degree $(n-1+\ell)/2$. Notice that $\Hl$ is empty when $n \equiv \ell \pmod{2}$.

Let $\mathcal{J}^\ell_{n,r}$ be the set of $n$-vertex, $K_{r+1}$-free graphs consisting of $\T[n-1][r]$ plus a vertex of degree $\ell+1$ whose neighborhood is traceable (equivalently, by \cref{prop:partitetraceable}, in the neighborhood of the vertex of degree $\ell + 1$ each part has size at most $(\ell + 2)/2$). Notice that $\mathcal{J}^\ell_{n,r} \subseteq \Gell$, and $\Gs \in \mathcal{J}^\ell_{n,r}$ provided that $\ell+1 \le \ceil{(r-1)(n-1)/r}$, so $\mathcal{J}^\ell_{n,r}$ is nonempty if $n \ge (1+1/(r-1))(\ell+1)+1$.
\end{defn}

Since graphs in $\Hl$ must be $K_{r+1}$-free and contain an $r$-partite Tur\'{a}n graph with neighbors in an independent set, this graph family is empty for large values of $n$ depending on $r$ and $\ell$. The graphs in $\Hl$ have structure as described in the following two lemmas. 

\begin{lem}[Lemma 3.3 in \cite{DK26}]\label{Hfamsize}
    Let $2 \le r \le n-1$ and $-1 \le \ell \le n-3$. If $G \in \Hl$, then $n \le 4r-\ell-3$ and $G$ consists of 
    an independent set $V(J)$ of $(n-1-\ell)/2$ vertices of degree $(n-1+\ell)/2$ and $H \cong \T[(n+1+\ell)/2][r]$ containing at least one partite set of size 1 and all other partite sets of size at most 2. 
    Furthermore, all vertices in partite sets of size $2$ are adjacent to all vertices of $J$.
\end{lem}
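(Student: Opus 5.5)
The plan is to work directly from the definition of $\Hl$. Write $G = H \cup J$, where $H \cong \T[(n+1+\ell)/2][r]$ and $J$ is an independent set of $j=(n-1-\ell)/2$ vertices, each of degree $(n-1+\ell)/2$. Set $h=(n+1+\ell)/2$, so $n = h + j$. Since $J$ is independent, each vertex of $J$ sends all $h-1$ of its edges into $H$. So every $v \in J$ is adjacent to all but exactly one vertex of $H$. The whole argument will exploit this one fact together with $K_{r+1}$-freeness.

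\textbf{Step 1 (small parts).} Fix $v \in J$ and let $u$ be the unique vertex of $H$ not adjacent to $v$. Consider any part $A$ of $H$ with $u \notin A$. Then $v$ is adjacent to all of $A$.
\begin{itemize}
\item If $v$ had a neighbor in every part of $H$, those neighbors together with $v$ would form a $K_{r+1}$. Since $H$ is complete $r$-partite, one neighbor per part gives a clique of size $r$, and adding $v$ makes $r+1$. Hence some part of $H$ contains no neighbor of $v$, and this can only be the part containing $u$, which must then be exactly $\{u\}$. So $H$ has a part of size $1$.
\item Now suppose some part $B$ had size at least $3$. Choose $v \in J$ (with $J$ nonempty, since $\ell \le n-3$). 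The unique non-neighbor $u$ of $v$ lies in a singleton part, so $v$ is adjacent to all of $B$. Because $H$ is a Turán graph, all parts have size $\lfloor h/r\rfloor$ or $\lceil h/r\rceil$. A singleton part together with a part of size $3$ would violate this balance. Hence every part has size at most $2$.
\end{itemize}

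\textbf{Step 2 (the ``furthermore'' clause).} For any $v \in J$, its non-neighbor $u$ lies in a singleton part, by the argument above. Therefore $v$ is adjacent to every vertex lying in a part of size $2$.

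\textbf{Step 3 (the bound on $n$).} Since all parts of $H$ have size $1$ or $2$ and at least one has size $1$, we get $h \le 2r-1$. Substituting $h=(n+1+\ell)/2$ gives $n+1+\ell \le 4r-2$, that is, $n \le 4r-\ell-3$.

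\textbf{Main obstacle.} The delicate point is the first bullet of Step 1, which must rule out $v$ hitting every part. This needs the observation that a non-neighbor set consisting of a single vertex can only block a $K_{r+1}$ if that vertex forms an entire part. I would also check the edge cases $r \ge 2$ and nonempty $J$ using the hypothesis $\ell \le n-3$, which guarantees $j \ge 1$.
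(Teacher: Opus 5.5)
Your proof is correct and is the direct argument from the definition (this paper only quotes the lemma from \cite{DK26}). Each $v\in V(J)$ misses exactly one vertex $u$ of $H$, $K_{r+1}$-freeness forces $\{u\}$ to be an entire part, balance of the Tur\'an graph caps every part at size $2$, and $h\le 2r-1$ is exactly $n\le 4r-\ell-3$.

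One small repair is needed. The hypotheses allow $h=(n+1+\ell)/2<r$; for example $K_{4,1,1}$ with $(r,n)=(4,6)$ and $K_{4,1,1,1}$ with $(r,n)=(5,7)$ in \cref{lem:exceptions}. In that case $\T[h][r]=K_h$ has empty parts, so your inference ``some part contains no neighbor of $v$, hence it is the part containing $u$'' fails as written. Treat $h<r$ separately: every nonempty part is then a singleton, so all conclusions are immediate. Your argument as given is valid for $h\ge r$, where all $r$ parts are nonempty.
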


\begin{lem}[Lemma 3.4 in \cite{DK26}]\label{Hfamell}
Let $G \in \Hl$ where $\ell \ge -1$ and $n \ge \ell+5$. Then either
\begin{enumerate}[(a)]
    \item \label{part:graph} $G$ is a complete multipartite graph with one partite set of size $(n+1-\ell)/2$ and the other partite sets of sizes at most 2, or
    \item\label{part:deg} for every integer $i$ in $1 \le i \le (n-1-\ell)/2$, either $d_i > i+\ell$ or $d_{n-i-\ell} > n-i-1$.
\end{enumerate}

\end{lem}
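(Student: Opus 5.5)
We would start from the structure supplied by \cref{Hfamsize}. Write $a=(n-1-\ell)/2$ and $b=(n+1+\ell)/2$, so $a+b=n$ and $b-1=(n-1+\ell)/2$. Then $G$ consists of an independent set $J$ of $a$ vertices, each of degree $b-1$, together with $H\cong\T[b][r]$. The parts of $H$ have size $1$ or $2$, at least one part is a singleton, and every vertex in a part of size $2$ is adjacent to all of $J$.

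First we pin down the neighbourhoods of the vertices of $J$.
\begin{itemize}
\item Since $J$ is independent, each $x\in J$ has all $b-1$ of its neighbours in $H$. So $x$ misses exactly one vertex $u_x$ of $H$.
\item Vertices in parts of size $2$ see all of $J$, so $u_x$ must be a singleton part of $H$.
\item This is consistent with $K_{r+1}$-freeness: $x$ misses an entire part, so $x$ together with $H$ contains no $K_{r+1}$.
\end{itemize}

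Case 1: all $u_x$ coincide with a single vertex $u$. Then $J\cup\{u\}$ is an independent set of size $a+1=(n+1-\ell)/2$. Every other vertex of $G$ lies in a part of $H$ of size at most $2$, and all cross-part adjacencies are present. Hence $G$ is complete multipartite with one part of size $(n+1-\ell)/2$ and all other parts of size at most $2$, which is alternative (a). The hypothesis $n\ge \ell+5$ gives $a\ge 2$, so the other case can genuinely occur. If $a=1$ we are automatically in this case.

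Case 2: at least two distinct singletons occur among the $u_x$. We compute the degree sequence.
\begin{itemize}
\item Vertices of $J$ have degree $b-1$.
\item A vertex in a part of size $2$ has degree $n-2$.
\item A singleton $u$ has degree $(b-1)+\abs{\setof{x\in J}{u_x\ne u}}$.
\end{itemize}
In this case no singleton is missed by every vertex of $J$, so every vertex of $H$ has degree at least $b$. Therefore $d_1=\cdots=d_a=b-1$, and exactly $a$ vertices have degree at most $b-1$.

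Now we verify (b) for each $1\le i\le a$.
\begin{itemize}
\item If $i<a$, then $i+\ell<a+\ell=b-1=d_i$, so $d_i>i+\ell$.
\item If $i=a$, the index $n-a-\ell=b-\ell$ satisfies $b-\ell=a+1$ (check: $b-\ell-1=(n-1-\ell)/2=a$). Since only $a$ vertices have degree at most $b-1$, we get $d_{a+1}\ge b>b-1=n-a-1$. This is exactly $d_{n-i-\ell}>n-i-1$.
\end{itemize}
So (b) holds.

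The only delicate points are two. The first is the index bookkeeping in the case $i=a$, which works out with equality. The second is the justification that each vertex of $J$ misses exactly one vertex and that it is a singleton. This follows directly from the degree count and \cref{Hfamsize}, including the degenerate case $b<r$, where every part of $H$ is a singleton.
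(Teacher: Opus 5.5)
Your proof is correct. Splitting on whether all the non-neighbours $u_x$ coincide gives (a) in one case. In the other case only the $a$ vertices of $J$ have degree $b-1=a+\ell$, so $d_i>i+\ell$ for $i<a$ and $d_{a+1}\ge b$ settles $i=a$.

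The paper quotes this lemma from \cite{DK26} without proof, so there is no proof here to compare against. Your argument is the natural one, and the same common-non-neighbour reasoning appears in the paper's proof of \cref{prop:kpathham}.

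One step should be stated explicitly. A vertex in a part of size $2$ has degree $n-2=a+b-2$, and this is at least $b$ only because $a\ge 2$. Your claim that every vertex of $H$ has degree at least $b$ needs this comparison, not just the count for singletons. Case 2 forces $a\ge 2$ anyway, so your argument does not actually need $n\ge \ell+5$ beyond the hypotheses of \cref{Hfamsize}.
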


Notice that the graphs in $\Gell$ satisfy a Chv\'{a}tal-type condition, 
and we can count their edges based on the numbers of edges in Tur\'{a}n graphs.
\begin{prop}[Proposition 3.2 in \cite{DK26}]\label{prop:gell is extremal}
    For integers $2 \le r \le n-1$ and $-1 \le \ell \le n-3$, suppose $G \in \Gell$. Then $G$ is an $n$-vertex, $K_{r+1}$-free graph. Let $d_1 \le \cdots \le d_n$ be its degrees. Then $d_j \le j+\ell$ for $j=1$, and $e(G)=e(\T[n-1])+\ell+1$.
\end{prop}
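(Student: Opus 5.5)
The statement has three parts: $G$ is $K_{r+1}$-free, $d_1 \le 1+\ell$, and $e(G)=e(\T[n-1])+\ell+1$. I will take each directly from \cref{def:graphs}. Write $G$ as $H \cong \T[n-1][r]$ together with a new vertex $x$, where $d(x)=\ell+1$ and $N(x)\subseteq V(H)$.

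\textbf{Freeness.} By \cref{def:graphs}, $\Gell$ consists of $K_{r+1}$-free graphs, so this holds by membership. I will still sketch why the family is sensible. $H$ is $r$-partite, so $H$ contains no $K_{r+1}$. Any $K_{r+1}$ in $G$ would have to use $x$, and its other $r$ vertices would form a $K_r$ inside $N(x)$. The requirement is therefore that $N(x)$ contains no $K_r$, which is built into the definition. No computation is needed.

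\textbf{Degree condition.} The degree sequence is $d_1\le\cdots\le d_n$, so $d_1$ is the minimum degree. Hence $d_1 \le d(x) = \ell+1 = 1+\ell$, which is exactly $d_j\le j+\ell$ for $j=1$.

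\textbf{Edge count.} Every edge of $G$ lies inside $H$ or is incident to $x$, and these two classes are disjoint. So
\[e(G)=e(H)+d(x)=e(\T[n-1][r])+\ell+1.\]

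The hypotheses $2\le r\le n-1$ and $-1\le \ell\le n-3$ only guarantee that the objects make sense. The case $\ell=-1$ gives an isolated $x$, and $\ell+1\le n-1$ ensures $x$ has room for its neighbours. I expect no real obstacle here.

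The only point needing care is confirming that $d_1$ denotes the smallest degree in the paper's convention. \cref{thm:kstable} lists degrees in nondecreasing order, so this is immediate.
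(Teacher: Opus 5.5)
Your proof is correct and matches the direct verification from \cref{def:graphs} on which this cited result rests: $K_{r+1}$-freeness holds by membership in $\Gell$, $d_1\le d(x)=\ell+1$, and $e(G)=e(\T[n-1][r])+d(x)$. One small correction to your side remark: the hypothesis $\ell\le n-3$ mainly ensures that $j=1$ lies in the range $1\le j\le (n-1-\ell)/2$ required in \cref{thm:degcondsummary}, not merely that $x$ has room for its neighbours.
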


While it is convenient to give numbers of edges in the form $e(\T[n])$, we also use bounds on the numbers of edges in Tur\'{a}n graphs and an exact count for small values of $r$.

\begin{prop}[\cite{Lavrov}]\label{prop:turanedgecount} 
    Let $n \ge r \ge 1$. Then $\frac{r-1}{2r}n^2 - \frac{r}{8} \le e(\T) \le \frac{r-1}{2r}n^2$, and, for every $1 \le r \le 7$, $e(\T) = \floor[\big]{\frac{r-1}{2r}n^2}$.
\end{prop}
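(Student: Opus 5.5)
The plan is to compute $e(\T)$ exactly as a function of $n$ and $r$. Everything then follows from an elementary bound on one correction term.

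\textbf{Exact formula.} Write $n = qr + s$ with $0 \le s < r$. The Tur\'{a}n graph $\T$ has $s$ parts of size $q+1$ and $r-s$ parts of size $q$. Every pair of vertices in different parts is an edge, so
\[
e(\T) = \tfrac{1}{2}\Bigl(n^2 - \textstyle\sum_i p_i^2\Bigr),
\]
where the $p_i$ are the part sizes. We have $\sum_i p_i^2 = s(q+1)^2 + (r-s)q^2 = rq^2 + 2sq + s$. Using $n^2 = r^2q^2 + 2rqs + s^2$, this equals $(n^2 - s^2)/r + s$. Substituting gives
\[
e(\T) = \frac{r-1}{2r}n^2 - \frac{s(r-s)}{2r}.
\]

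\textbf{The two-sided bound.} The correction term $s(r-s)/(2r)$ is nonnegative because $0 \le s < r$. This gives the upper bound. For the lower bound, AM--GM gives $s(r-s) \le r^2/4$, so the correction is at most $r/8$.

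\textbf{The floor identity for $r \le 7$.} Since $e(\T)$ is an integer, it suffices to show that the correction term is strictly less than $1$. Then $\frac{r-1}{2r}n^2$ lies in $[e(\T), e(\T)+1)$, and its floor is $e(\T)$. The condition needed is $s(r-s) < 2r$. This holds because $s(r-s) \le r^2/4 < 2r$ whenever $r < 8$.

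No step is a real obstacle; the only care needed is the algebraic simplification of $\sum_i p_i^2$. As a sanity check, $r=8$, $s=4$ gives $s(r-s) = 16 = 2r$, so the floor identity genuinely fails at $r=8$, consistent with the stated range $1 \le r \le 7$.
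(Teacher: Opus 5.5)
Your proof is correct and complete. The paper gives no proof of its own and simply cites Lavrov. Your route is the standard one for this fact: the exact count $e(T_r(n)) = \frac{r-1}{2r}n^2 - \frac{s(r-s)}{2r}$ with $n = qr+s$, followed by $0 \le s(r-s) \le r^2/4$ for the two-sided bound and $r^2/4 < 2r$ (i.e.\ $r \le 7$) for the floor identity.
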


In comparing bounds involving floor functions, we use the fact that for any real numbers $x$ and $y$, $\floor{x} - \floor{y} > x - 1-y$.

\subsection{New edge and clique density conditions for stable properties}\label{sec:bounds}

In \cite{DK26} we maximized the numbers of edges and $t$-cliques in graphs satisfying a condition on low-degree vertices related to P\'{o}sa's theorem. We also proved that the extremal graphs outside $\Gell$ are quite limited. Here we recall these results (\cref{thm:degcondsummary}, \cref{lem:nrange}, and \cref{cor:posaclique}) and show their significance in combination with \cref{thm:kstable}. They imply new results, Theorems \ref{thm:stability} and \ref{thm:cliquestability}, giving upper bounds on the numbers of edges and $t$-cliques, respectively, in graphs that avoid arbitrary stable properties holding for sufficiently large complete graphs.

The following theorem provides an upper bound on the number of edges in an $n$-vertex, $K_{r+1}$-free graph that satisfies the P\'{o}sa-type condition on low-degree vertices, for all $r \ge 3$.

\begin{thm}[Theorem 4.1 in \cite{DK26}]\label{thm:degcondsummary}
    Let $\ell \ge -1$, $r \ge 3$, and \[n \ge \begin{cases} 6\ell+26 & \text{if } r=3\\ \max\{3+ \ell + \frac{4(\ell+2)}{r-3},5+\ell+\frac{r+2\ell+7}{2r-2}\} & \text{if } 4 \le r \le 7\\ \ell+5+\frac{4(\ell+4)}{r-4} & \text{if } r \ge 8\end{cases}\] be integers, or $(\ell,r,n)=(1,8,10)$. Let $G$ be an $n$-vertex, $K_{r+1}$-free graph with degrees $d_1 \le \cdots \le d_n$. If there is an integer $j$ in $1 \le j \le (n-1-\ell)/2$ such that $d_j \le j+\ell$, then \[ e(G) \leq e(\T[n-1][r]) + (\ell+1). \]
    For $r=3$ and $r \ge 8$, equality holds if and only if $G \in \Gell[r][n][\ell]$. For $4 \le r \le 7$, if $G \in \Gell[r][n]$ then equality holds, and if equality holds then either $G \in \Gell[r][n]$ or $G \in \Hl[r][n]$. In all cases, $\Gell$ is nonempty, and the bound is tight.
\end{thm}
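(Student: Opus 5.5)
The plan is to take the vertices of smallest degree as a set $S$, delete them, and bound what remains by Tur\'an's theorem. Fix $j$ with $1\le j\le (n-1-\ell)/2$ and $d_j\le j+\ell$, and let $S$ be a set of $j$ vertices of degrees $d_1,\dots,d_j$, so each vertex of $S$ has degree at most $j+\ell$. Since $G-S$ is $K_{r+1}$-free on $n-j$ vertices,
\[
e(G)\le \sum_{v\in S} d(v) - e(G[S]) + e(\T[n-j][r]) \le j(j+\ell) - e(G[S]) + e(\T[n-j][r]).
\]
The goal is to show the right-hand side is at most $e(\T[n-1][r])+\ell+1$, with equality only in the described configurations.

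\textbf{The case $j=1$.} This case is immediate: the single vertex $x$ of $S$ has degree at most $\ell+1$, and $e(G-x)\le e(\T[n-1][r])$. For equality, $d(x)=\ell+1$ and, by uniqueness in Tur\'an's theorem, $G-x\cong \T[n-1][r]$. Hence $G\in\Gell$. Nonemptiness of $\Gell$ and tightness follow from \cref{prop:gell is extremal}, since the assumed lower bound on $n$ exceeds $(1+1/(r-1))(\ell+1)+1$.

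\textbf{The case $j\ge 2$.} Set $f(j)=j(j+\ell)+e(\T[n-j][r])$. By \cref{prop:turanedgecount},
\[
e(\T[n-1][r])-e(\T[n-j][r]) \ge \tfrac{r-1}{2r}(j-1)(2n-1-j)-\tfrac r8 .
\]
Up to this error term, $f$ is a convex quadratic in $j$ with leading coefficient $1-\frac{r-1}{2r}>0$. So it suffices to check the two endpoints: $j=2$, and $j_{\max}=\lfloor (n-1-\ell)/2\rfloor$. For small and moderate $j$ the inequality $f(j)<e(\T[n-1][r])+\ell+1$ holds with room to spare once $n$ exceeds the thresholds in the hypothesis; this is where the explicit bounds on $n$, split by $r=3$, $4\le r\le 7$ and $r\ge 8$, get used. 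For $r\le 7$ I would use the exact formula $\floor{\frac{r-1}{2r}n^2}$ together with the inequality $\floor{x}-\floor{y}>x-1-y$ to control rounding.

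\textbf{The upper endpoint (main obstacle).} Near $j_{\max}$, where $n\approx 2j+\ell+1$, the crude estimate is not strong enough, especially for $r=3$, where $\frac{r-1}{r}=\frac23$. There I would also keep the $-e(G[S])$ term and argue in two steps. If $S$ spans any edge, or some vertex of $S$ has degree below $j+\ell$, the loss of at least one edge closes the gap. Otherwise $S$ is independent, every vertex of $S$ has degree exactly $j+\ell$, and $G-S$ is exactly a Tur\'an graph. When $n-1-\ell$ is odd this configuration is precisely a member of $\Hl$.

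\textbf{Exclusion of $\Hl$.} \cref{Hfamsize} gives $n\le 4r-\ell-3$ for any member of $\Hl$, which contradicts the hypothesis on $n$ when $r=3$ and when $r\ge 8$. Hence equality forces $G\in\Gell$ in those ranges, while for $4\le r\le 7$ the family $\Hl$ survives as the stated possible extra equality case. The remaining risk is the bookkeeping at the upper endpoint for $r=3$, where the constant $6\ell+26$ must absorb both the Tur\'an rounding error $r/8$ and the parity of $n-\ell$. I would handle this by computing $f(j_{\max})$ exactly for each residue of $n$ modulo $3$.
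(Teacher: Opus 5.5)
\textbf{There is a genuine gap at $r=3$.} The paper only quotes this theorem from \cite{DK26}, so there is no in-paper proof to compare against. Still, its hypotheses show exactly how far your crude count can reach.

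For $j\ge 2$, the inequality $j(j+\ell)+e(\T[n-j][r])\le e(\T[n-1][r])+\ell+1$ reads, up to rounding, $(j-1)(j+1+\ell)\le\frac{r-1}{2r}(j-1)(2n-1-j)$. At $j=(n-1-\ell)/2$ this is equivalent to $(r-3)n\ge 3r-1+(r+1)\ell$, i.e.\ $n\ge 3+\ell+\frac{4(\ell+2)}{r-3}$. That is precisely the first term of the hypothesis for $4\le r\le 7$, and it is why $\Hl$ survives there. For $r=3$ the condition becomes $0\ge 8+4\ell$, which is false for every $n$.

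At $j=(n-1-\ell)/2$ your bound exceeds the target by about $(j-1)(\ell+2)/3$, which grows linearly in $n$. For example, with $\ell=0$, $n=27$, $j=13$ you get $13\cdot 13+e(\T[14][3])=169+65=234$, whereas $e(\T[26][3])+1=226$. Consequences:
\begin{itemize}
\item ``The loss of at least one edge closes the gap'' is false.
\item No residue-by-residue computation of $f(j_{\max})$ can help.
\item What remains unresolved is every graph in which $e(G[S])$ plus the degree deficit on $S$ is smaller than this linear excess, not just the rigid configuration you single out.
\end{itemize}

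The missing ingredient is an argument that uses $K_4$-freeness of the edges between $S$ and $V(G)\setminus S$. When $|S|\approx n/2$, the estimates $d(v)\le j+\ell$ on $S$ and $e(G-S)\le e(\T[n-j][3])$ cannot both be nearly tight. One possible tool is Erd\H{o}s's degree-majorization theorem: there is a complete $3$-partite $H$ on $V(G)$ with $d_H(v)\ge d_G(v)$ for all $v$. This gives $2e(G)\le\sum_{v\in S}\min\{j+\ell,d_H(v)\}+\sum_{v\notin S}d_H(v)$, which by a rough check leaves slack of order $n^2$ for $j$ in this range. Your crude count does suffice for $2\le j$ up to roughly $n/2-1-3\ell/4$. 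Reaching the exact threshold $6\ell+26$ would still take care.

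\textbf{Your exclusion of $\Hl$ for $r\ge 8$ is also wrong.} \cref{Hfamsize} only gives $n\le 4r-\ell-3$, which the hypothesis does not contradict. For instance, $(\ell,r,n)=(0,8,9)$ satisfies the hypothesis and $K_{5,1,1,1,1}\in\Hl[8][9][0]$. For $r\ge 8$ you must instead show the count is strict for every $2\le j\le (n-1-\ell)/2$.

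The additive error $r/8$ from \cref{prop:turanedgecount} is too lossy for that when $r$ is large relative to $n$. Use the exact increments $e(\T[m][r])-e(\T[m-1][r])=m-\lceil m/r\rceil\ge (r-1)(m-1)/r$. These give $e(\T[n-1][r])-e(\T[n-j][r])\ge\frac{r-1}{2r}(j-1)(2n-j-2)$. Strictness then reduces to $(r-3)n>5r-3+(r+1)\ell$. This follows from $n\ge \ell+5+\frac{4(\ell+4)}{r-4}$, and it also holds for the exceptional triple $(1,8,10)$.

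Finally, the rigid configuration lies in $\Hl$ when $n-1-\ell$ is even, not odd.
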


While \cref{thm:degcondsummary} leaves open the possibility of extremal graphs in $\Hl$ for $4 \le r \le 7$, we further proved that these exceptional extremal graphs only occur for small values of $n$.

\begin{lem}[Lemma 5.1 in \cite{DK26}]\label{lem:nrange} Let $\ell \ge -1$, $4 \le r \le 7$, and $n \geq \max\{3+ \ell + \frac{4(\ell+2)}{r-3},5+\ell+\frac{r+2\ell+7}{2r-2}\}$ be integers. Let $G$ be an $n$-vertex, $K_{r+1}$-free graph. Suppose that $G \notin \Gell$ 
and \[ e(G) = e(\T[n-1][r]) + (\ell+1)  = \floor[\Big]{\frac{r-1}{2r}n^2 - \frac{r-1}{r}n +\frac{2\ell r + 3r-1}{2r}}.\]
Then 
\begin{itemize}
    \item if $r=4$, then $5\ell+11 \le n \le 13-\ell$,
    \item if $\ell=-1$ and $5 \le r \le 7$, then $n \le 5$,
    \item if $\ell\ge 0$ and $r=5$, then $3\ell+7\le n \le \min\set{17-\ell,10\ell+10}$, and
    \item if $\ell\ge 0$ and $6 \le r \le 7$, then $n \le \min\{4r-3-\ell,6\ell+6\}$.
\end{itemize}
\end{lem}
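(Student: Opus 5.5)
The lemma does not explicitly assume a degree condition, but in its intended use $G$ is an equality case of \cref{thm:degcondsummary}, so I will work inside that situation. For $4 \le r \le 7$, equality together with $G \notin \Gell$ forces $G \in \Hl$. The plan is to exploit the explicit structure of graphs in $\Hl$ from \cref{Hfamsize}, count $e(G)$ exactly as a function of $n$, $r$, $\ell$, and compare with the Turán-type value $\floor{\frac{r-1}{2r}n^2 - \frac{r-1}{r}n + \frac{2\ell r+3r-1}{2r}}$ supplied by \cref{prop:turanedgecount}. The resulting Diophantine-type constraint should confine $n$ to the stated windows.

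\textbf{Step 1: structure of $G$.} Set $h=(n+1+\ell)/2$. By \cref{Hfamsize}, $G$ consists of:
\begin{itemize}
\item $H \cong \T[h][r]$, whose parts have sizes $1$ or $2$, with at least one singleton;
\item an independent set $J$ of $(n-1-\ell)/2$ vertices, each of degree $(n-1+\ell)/2 = h-1$.
\end{itemize}
Hence $h \le 2r-1$, which gives $n \le 4r-3-\ell$, and the number of parts of size $2$ is $a=h-r \ge 0$. Each $u\in J$ has $N(u)\subseteq V(H)$ of size $h-1$. Since $G$ is $K_{r+1}$-free, $N(u)$ must avoid an entire part of $H$, so $u$ misses exactly one vertex of $H$, namely a singleton part. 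This matches the last clause of \cref{Hfamsize}. Therefore
\[
e(G)=\binom{h}{2}-a+\frac{(n-1-\ell)(n-1+\ell)}{4}.
\]
This is an explicit quadratic in $n$ for each fixed $r$ and $\ell$, valid for $r \le h \le 2r-1$.

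\textbf{Step 2: compare with the Turán value.} Set the expression from Step 1 equal to the floor expression in the hypothesis. Using $\floor{x}>x-1$, this gives two-sided polynomial inequalities in $n$ and $\ell$. The leading coefficients differ: $G$'s count has leading term $n^2/8+n^2/4=3n^2/8$, versus $\frac{r-1}{2r}n^2$. So equality can only hold in a bounded range of $n$, which yields the upper bounds such as $n \le 13-\ell$ and $n\le 17-\ell$. Combined with $n\le 4r-3-\ell$ and $h\ge r$ (i.e.\ $n \ge 2r-1-\ell$), this gives the ranges. The lower bounds such as $5\ell+11\le n$ and $3\ell+7\le n$ should come from the hypothesis on $n$, or from requiring $a=h-r\ge 0$ in the equation. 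The case $\ell=-1$ is done directly: $h=n/2$, and the constraint should force $n\le 5$ for $r\ge5$.

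\textbf{Step 3: case analysis by $r\in\{4,5,6,7\}$.} For each $r$, use the exact formula $e(\T[n-1][r])=\floor{\frac{r-1}{2r}(n-1)^2}$ from \cref{prop:turanedgecount}. Substitute and solve the inequality for $n$ in terms of $\ell$, also using the parity requirement $n\not\equiv \ell \pmod 2$. The main obstacle is this bookkeeping. The floor functions and parity conditions make the inequalities delicate near the endpoints, and extra constraints such as $n\le 10\ell+10$ (for $r=5$) and $n \le 6\ell+6$ (for $6\le r\le 7$) must be extracted. I expect these come from rewriting the equality as a linear condition on $a$: taking the difference of the two quadratics, the requirement $0\le a\le r-1$ bounds $n$ linearly in $\ell$. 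I would derive the bounds that way first and check small cases by hand where the floor matters.
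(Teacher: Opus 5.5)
Reducing to $G\in\Hl$ via \cref{thm:degcondsummary} is the right frame. So is importing the degree condition, which the statement really needs, and reading off the structure from \cref{Hfamsize}. But Step~1 contains an error that your later steps inherit: nothing forces $h=(n+1+\ell)/2\ge r$.

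When $h<r$ the Tur\'an graph $\T[h][r]$ is just $K_h$, so $e(G)=\binom{h}{2}+\frac{(n-1)^2-\ell^2}{4}$. Your formula with $a=h-r<0$ therefore overcounts by $r-h$. In Step~2 you then treat $h\ge r$, i.e.\ $n\ge 2r-1-\ell$, as if it were a constraint, so the range $n\le 2r-2-\ell$ is never examined.

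That range is admissible and genuinely populated. For instance, $K_{4,1,1,1}$ with $(r,n,\ell)=(5,7,0)$ and $K_{4,1,1,1,1,1}$ with $(r,n,\ell)=(7,9,2)$ in \cref{lem:exceptions} both have $h<r$. It is also exactly where the nontrivial claims for small $\ell$ must be proved. For $(r,\ell)=(7,0)$ the admissible values $n=7,9,11$ all have $h<7$. For $(r,\ell)=(5,-1)$ the admissible values $n=6,8$ have $h<5$. In both cases the lemma asserts that no such $G$ exists. Applying your formula there anyway produces spurious equalities that would contradict the lemma. At $(r,\ell,n)=(7,0,9)$ it gives $\binom{5}{2}+2+16=28=e(\T[8][7])+1$, and at $(5,-1,8)$ it gives $19=e(\T[7][5])+\ell+1$, whereas the true counts are $26$ and $18$.

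The repair is to use $e(\T[h][r])=\binom{h}{2}-\max\{0,h-r\}$ for $h\le 2r-1$. Equivalently, use the floor formula of \cref{prop:turanedgecount}, which for $r\le 7$ stays valid when $h<r$, as in \cref{eq:Hedges}.

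It also helps to isolate what actually needs proof:
\begin{itemize}
\item The lower bounds $5\ell+11$ and $3\ell+7$ are literally the hypothesis on $n$ for $r=4,5$.
\item The bound $n\le 4r-3-\ell$ is \cref{Hfamsize} itself. It does not come from comparing leading coefficients, which in fact coincide (both $\frac{3}{8}n^2$) when $r=4$.
\item Together these bound $\ell$ (by $0,2,4,6$ for $r=4,5,6,7$).
\end{itemize}
So the real content is a finite verification of $n\le 5$, $n\le 10\ell+10$ and $n\le 6\ell+6$. Your speculative ``linear condition on $a$'' cannot substitute for it, because the check must be exact at the level of integer parts. For example, at $(r,\ell,n)=(6,0,7)$ the unfloored quadratic expressions differ by only $\frac13$, and it is $15\ne 16$ that rules the case out.

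A convenient way to organize the check is the identity $e(\T[n-1][r])+\ell+1-e(G)=\binom{s-1}{2}-c(n-1)+c(h)$. Here $s=(n-1-\ell)/2$ and $c(m)=\binom{m}{2}-e(\T[m][r])$, so that $c(h)=\max\{0,h-r\}$.
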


Now we combine the above two statements from \cite{DK26} with Bondy and Chv\'{a}tal's sufficient condition for stable properties (\cref{thm:kstable}) to obtain an upper bound on the number of edges in a $K_{r+1}$-free graph avoiding a property $P$.

\begin{thm}\label{thm:stability}
    Let $\ell \ge -1$, and let $P$ be an $(n+\ell)$-stable property for which $n(P)$ exists. Let $n \ge n(P)$, and suppose $n$ is large enough to satisfy the lower bound on $n$ in \cref{thm:degcondsummary} and to exceed the upper bounds on $n$ in \cref{lem:nrange}. 
    Let $r \ge 3$. Let $G$ be an $n$-vertex, $K_{r+1}$-free graph that does not have $P$. Then \[ e(G) \leq e(\T[n-1][r]) + (\ell+1). \]
    The set $\Gell$ is nonempty, and if equality holds then $G \in \Gell$. If at least one graph in $\Gell$ does not have $P$, then the bound is tight.
\end{thm}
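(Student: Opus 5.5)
The plan is to chain \cref{thm:kstable} into \cref{thm:degcondsummary}, and then use \cref{lem:nrange} to rule out the exceptional extremal graphs.

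\textbf{Step 1 (bound).} Since $G$ does not have $P$ and $n \ge n(P)$, \cref{thm:kstable} applies. It gives an integer $i$ with $1 \le i \le (n-1-\ell)/2$ and $d_i \le i+\ell$. This is exactly the hypothesis of \cref{thm:degcondsummary} with $j=i$. The assumed lower bound on $n$ is the one required there, and $r \ge 3$. So \cref{thm:degcondsummary} yields $e(G) \le e(\T[n-1][r]) + (\ell+1)$ and also asserts that $\Gell$ is nonempty. (I only need the first half of the conclusion of \cref{thm:kstable}; the condition on $d_{n-i-\ell}$ is not used.)

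\textbf{Step 2 (equality).} Suppose equality holds. For $r=3$ or $r \ge 8$, \cref{thm:degcondsummary} directly gives $G \in \Gell$.

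For $4 \le r \le 7$, suppose for contradiction that $G \notin \Gell$. By \cref{thm:degcondsummary}, $G \in \Hl$, but I only need that $G$ is $K_{r+1}$-free, lies outside $\Gell$, and attains the extremal count. To apply \cref{lem:nrange}, I rewrite $e(\T[n-1][r])+\ell+1$ in the floor form. By \cref{prop:turanedgecount}, for $r \le 7$ we have $e(\T[n-1][r]) = \floor{\frac{r-1}{2r}(n-1)^2}$. Adding the integer $\ell+1$ inside the floor and expanding $(n-1)^2$ gives
\[\floor[\Big]{\frac{r-1}{2r}n^2 - \frac{r-1}{r}n + \frac{r-1}{2r} + \ell+1},\]
and $\frac{r-1}{2r}+\ell+1 = \frac{2\ell r+3r-1}{2r}$, matching the expression in the lemma. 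The lower bound on $n$ in \cref{lem:nrange} coincides with that of \cref{thm:degcondsummary} for $4 \le r \le 7$. \cref{lem:nrange} then forces $n$ to be at most one of the listed upper bounds, depending on $r$ and $\ell$. This contradicts the hypothesis that $n$ exceeds those upper bounds. Hence $G \in \Gell$.

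One case needs care: if the lemma's conclusion is a two-sided range such as $5\ell+11 \le n \le 13-\ell$, then "exceeding the upper bound" means $n > 13-\ell$, so the contradiction still holds. Also, if a listed range is empty, the case is vacuous. I expect this equality step, and specifically the floor-form bookkeeping, to be the only place requiring real checking.

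\textbf{Step 3 (tightness).} Each graph in $\Gell$ has exactly $e(\T[n-1][r])+\ell+1$ edges by \cref{prop:gell is extremal}. It is $n$-vertex and $K_{r+1}$-free by definition. So if some member of $\Gell$ lacks $P$, it attains the bound, and the bound is tight.
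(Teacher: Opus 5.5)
Your proposal is correct and takes the same route as the paper: \cref{thm:kstable} supplies the hypothesis of \cref{thm:degcondsummary}, \cref{prop:gell is extremal} gives tightness, and \cref{lem:nrange} rules out equality outside $\Gell$ for $4 \le r \le 7$. Your explicit check that $e(\T[n-1][r])+\ell+1$ has the floor form required by \cref{lem:nrange} is a detail the paper leaves implicit. That check also holds when $n-1<r$, where \cref{prop:turanedgecount} does not literally apply, because $0 \le \frac{r-1}{2r}m^2-\binom{m}{2} = \frac{m(r-m)}{2r} < 1$ for $0 \le m < r \le 7$.
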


\begin{proof}
    By \cref{thm:kstable}, $G$ satisfies the hypothesis of \cref{thm:degcondsummary}. If at least one graph in $\Gell$ does not have $P$, then equality holds for this graph by \cref{prop:gell is extremal}, so the bound is tight. This completes the proof for $r=3$ and $r \ge 8$. For $4 \le r \le 7$ and large enough $n$, by \cref{lem:nrange}, equality implies $G \in \Gell$.
\end{proof}

Notice that for a general stable property $P$ we do not know whether any graphs in $\Gell$ avoid property $P$, so the upper bound on the number of edges given by \cref{thm:stability} may or may not be tight. For example, the property of containing a path $P_k$, for $4 \le k \le n$, is $(n-1)$-stable \cite{BC76}, but for $k < n$, the graphs in $\Gell[r][n][-1]$ contain $P_k$, so \cref{thm:stability} does not determine the maximum number of edges in an $n$-vertex, $P_k$-free, $K_{r+1}$-free graph. Therefore, much of this paper focuses on the properties of \cref{table}, proving that at least some graphs in $\Gell$ (though not necessarily all) avoid each of these properties, and characterizing the extremal graphs.

In \cite{DK26} we also found the maximum number of $t$-cliques in $K_{r+1}$-free graphs satisfying the condition on low-degree vertices, 
though we did not characterize the extremal graphs. 
\begin{cor}[Corollary 6.5 in \cite{DK26}]\label{cor:posaclique}
Let $\ell \ge -1$, $r \ge 3$, and \[n \ge \begin{cases} 6\ell+26 & \text{if } r=3\\ \max\{3+ \ell + \frac{4(\ell+2)}{r-3},5+\ell+\frac{r+2\ell+7}{2r-2}\} & \text{if } 4 \le r \le 7\\ \ell+5+\frac{4(\ell+4)}{r-4} & \text{if } r \ge 8\end{cases}\] be integers, or $(\ell,r,n)=(1,8,10)$. Let $G$ be an $n$-vertex, $K_{r+1}$-free graph with degrees $d_1 \le \cdots \le d_n$. If there is an integer $j$ in $1 \le j \le (n-1-\ell)/2$ such that $d_j \le j+\ell$, then, for all $t \ge 2$, $\k(G) \le \k(\Gs)$. Equality holds if $G \cong \Gs$.
\end{cor}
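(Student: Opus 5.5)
The plan is to reduce the clique count to the edge count. First I bound the number of edges using \cref{thm:degcondsummary}. Then I use a Kruskal--Katona-type theorem for $K_{r+1}$-free graphs, which says that among $K_{r+1}$-free graphs with at most a given number of edges, the colex Tur\'an graph maximizes the number of $t$-cliques.

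Let $G$ satisfy the hypotheses. The degree hypothesis is exactly the hypothesis of \cref{thm:degcondsummary}, so
\[ e(G) \le m := e(\T[n-1][r])+\ell+1 .\]
The same theorem also tells us that $\Gell$ is nonempty. In particular $\ell+1 \le \ceil{(r-1)(n-1)/r}$, so $\Gs$ is defined.

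Next I identify $\Gs$ with the colex Tur\'an graph $\CT$ for this value of $m$. This is the identification recalled from \cite{DK26}. The colex order on edges first lists all edges of $\T[n-1][r]$ in $r$-partite colex order. The next $\ell+1$ edges in this order all meet a single new vertex $x$. Their other endpoints are the first $\ell+1$ vertices in the colex order of the remaining $r-1$ classes, and these span a copy of $\T[\ell+1][r-1]$ that avoids a smallest part. This is precisely the definition of $\Gs$.

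Now I invoke Frohmader's Kruskal--Katona-type theorem for $K_{r+1}$-free graphs. It states that every $K_{r+1}$-free graph with $m'$ edges has at most $\k(\CT[m'])$ $t$-cliques, for every $t \ge 2$. Colex Tur\'an graphs are nested: $\CT[m']$ is a subgraph of $\CT[m]$ whenever $m' \le m$. Hence $\k(\CT[m'])$ is nondecreasing in $m'$. Combining this with $e(G)\le m$ gives
\[ \k(G) \le \k(\CT[e(G)]) \le \k(\CT) = \k(\Gs). \]

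For the equality statement, the graph $\Gs$ itself satisfies the degree hypothesis with $j=1$ by \cref{prop:gell is extremal}. It therefore attains the bound trivially, and no characterization of the extremal graphs is claimed.

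The step I expect to be the main obstacle is verifying that $\Gs$ coincides exactly with $\CT$. This requires fixing the colex ordering on vertex pairs of the $r$-partite Tur\'an structure carefully. One must check that the first $e(\T[n-1][r])$ edges form $\T[n-1][r]$. One must also check that the following $\ell+1$ edges attach $x$ to a $\T[\ell+1][r-1]$ disjoint from a smallest part, so that the upper bound on $\ell$ keeps us inside one "layer". If invoking Frohmader's theorem turns out to be inconvenient, a fallback is induction on $n$. In that approach I would delete the low-degree vertex $v$ with $d(v)=d_j\le j+\ell$. I would then bound the $t$-cliques through $v$ by $\k[t-1]$ of a $K_r$-free graph on $d(v)$ vertices using Zykov's theorem, and bound the rest by Zykov applied to $G-v$. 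However, the bookkeeping for $j>1$ makes that route considerably messier.
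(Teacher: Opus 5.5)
Your proposal is correct and is essentially the argument behind the cited result. It combines the edge bound $e(G)\le e(\T[n-1][r])+\ell+1$ from \cref{thm:degcondsummary}, the Frohmader-based colex Tur\'an bound (\cref{thm:generalcolexturan}), the identification $\CT=\Gs$ (\cref{obs:colexTuran}), and the fact that $\Gs$ itself satisfies the degree hypothesis with $j=1$. One cosmetic point: for $\ell=-1$ the colex Tur\'an graph is $\T[n-1][r]$ without the isolated vertex of $\Gs[r][n][-1]$, which does not change any $t$-clique count.
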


An immediate consequence is the following $t$-clique density condition for a stable property. 

\begin{thm}\label{thm:cliquestability}
    Let $\ell \ge -1$, and let $P$ be an $(n+\ell)$-stable property for which $n(P)$ exists. Let $n \ge n(P)$, and suppose $n$ also satisfies the lower bound on $n$ in \cref{cor:posaclique}.  
    Let $r \ge 3$. Let $G$ be an $n$-vertex, $K_{r+1}$-free graph that does not have $P$. Then, for all $t \ge 2$, \[ \k(G) \leq \k(\Gs). \]
    If $\Gs$ does not have $P$ then the bound is tight.
\end{thm}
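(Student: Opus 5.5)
The plan mirrors the proof of \cref{thm:stability}, with \cref{cor:posaclique} playing the role of \cref{thm:degcondsummary}.

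Let $G$ be an $n$-vertex, $K_{r+1}$-free graph with degrees $d_1 \le \cdots \le d_n$ that does not have $P$. Since $P$ is $(n+\ell)$-stable, $n(P)$ exists, and $n \ge n(P)$, \cref{thm:kstable} gives an integer $i$ with $1 \le i \le (n-1-\ell)/2$ and $d_i \le i+\ell$. We discard the second conclusion, $d_{n-i-\ell}\le n-i-1$, since it is not needed. Taking $j=i$, $G$ satisfies the hypothesis of \cref{cor:posaclique}: $r\ge 3$, $\ell \ge -1$, and $n$ satisfies the lower bound there by assumption. So \cref{cor:posaclique} yields $\k(G) \le \k(\Gs)$ for every $t \ge 2$.

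For tightness, suppose $\Gs$ does not have $P$. We must check that $\Gs$ is an admissible graph in the statement. It has $n$ vertices. It is $K_{r+1}$-free because it lies in $\Gell$ whenever $\ell+1 \le \ceil{(r-1)(n-1)/r}$; in any case its construction (a Tur\'an graph plus a vertex whose neighborhood is $r-1$-partite) is $K_{r+1}$-free. Hence $\Gs$ attains the bound, and the bound is tight.

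The main point to verify is that $\Gs$ is actually defined in the range of $n$ we assume, namely $\ell+1\le\ceil{(r-1)(n-1)/r}$. I would derive this from the lower bound on $n$ in \cref{cor:posaclique}, which in every case gives at least $n\ge \ell+5$ and hence comfortably implies the inequality. Apart from this check, the argument is a direct chaining of earlier results, so I expect no real obstacle.
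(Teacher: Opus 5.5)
Your proposal is correct and is essentially the paper's proof: \cref{thm:kstable} supplies an index $1\le j\le (n-1-\ell)/2$ with $d_j\le j+\ell$, so \cref{cor:posaclique} applies directly, and tightness is immediate because $\Gs$ is itself an $n$-vertex, $K_{r+1}$-free graph without $P$. One small caveat on your side check of $\ell+1\le\ceil{(r-1)(n-1)/r}$, which the paper does not need since it is already implicit in \cref{cor:posaclique}: $n\ge\ell+5$ alone does not imply it (it fails for $r=3$, $\ell=8$, $n=13$), but the actual case-by-case lower bounds in \cref{cor:posaclique} do.
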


\begin{proof}
    By \cref{thm:kstable}, $G$ satisfies the hypothesis of \cref{cor:posaclique}.
\end{proof}

\section{Hamiltonicity-like Properties in Extremal Families}\label{sec:graphs}

In this section we address the Hamiltonicity-like properties of graphs in $\Gell$, $\Hl$, and $\mathcal{J}^\ell_{n,r}$ in \cref{prop:gell}, \cref{kpathtrace}, and \cref{Hfam}. First we confirm that the graphs in $\Gell$ satisfy the requirements not to have various Hamiltonicity-like properties.

\begin{prop}\label{prop:gell}
    For integers $2 \le r \le n-1$ and $-1 \le \ell \le n-3$, suppose $G \in \Gell$.
    \begin{enumerate}[(a)]
        \item If $\ell = -1$, then $G$ is not traceable and $e(G)=e(\T[n-1])$.
        \item If $\ell = 1$, then $G$ is not Hamiltonian-connected and $e(G)=e(\T[n-1])+2$.
        \item\label{part:lpath} For $\ell \ge 0$, if $G \in \mathcal{J}^\ell_{n,r}$, then $G$ is not $\ell$-path Hamiltonian and $e(G)=e(\T[n-1])+\ell+1$.
        \item For $\ell \ge 0$, $G$ is not $\ell$-Hamiltonian and $e(G)=e(\T[n-1])+\ell+1$.
        \item For $\ell \ge 1$, $G$ is not $\ell$-Hamiltonian-connected and $e(G)=e(\T[n-1])+\ell+1$.
        \item\label{part:gellkconn} $G$ is not $(\ell+2)$-connected and $e(G)=e(\T[n-1])+\ell+1$.
    \end{enumerate}
\end{prop}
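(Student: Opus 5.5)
The edge counts in every part come directly from \cref{prop:gell is extremal}: $e(G)=e(\T[n-1])+\ell+1$ for any $G\in\Gell$. So the work is to show, for each part, that $G$ fails the property. Write $x$ for the added vertex of degree $\ell+1$, $N=N(x)$ with $|N|=\ell+1$, and $H=G-x\cong\T[n-1][r]$. In every part I would exhibit an explicit obstruction built from $x$ and a small deleted set contained in $N$.

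For (a), $\ell=-1$, so $x$ is isolated. Since $n\ge 2$, $G$ is disconnected and hence not traceable.

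For (b), $\ell=1$, so $N=\{a,b\}$. Any Hamiltonian path between $a$ and $b$ has $x$ as an interior vertex, and $x$ then uses both of its neighbors $a$ and $b$. These are the path's endpoints, so the path is $a,x,b$. This has only $3<n$ vertices, because $n\ge r+1\ge 3$, and equality would force $n=3$ with $H=K_2$. The edge case $n=3$ needs a small check. If $n=3$, then $r=2$, $H=K_2$, and $G$ is a triangle, which is Hamiltonian-connected; but then $\ell=1>n-3=0$ is excluded. So there is no Hamiltonian $a$--$b$ path.

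For (d), take $S\subseteq N$ with $|S|=\ell$, which is allowed since $\ell\le n-3$. Then $x$ has degree $1$ in $G-S$, and $G-S$ has $n-\ell\ge 3$ vertices, so $G-S$ is not Hamiltonian.

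For (e), take $S\subseteq N$ with $|S|=\ell-1<\ell$. Then $x$ has exactly two neighbors $a,b$ in $G-S$. The argument of (b) shows there is no Hamiltonian $a$--$b$ path, because $G-S$ has $n-\ell+1\ge 4$ vertices.

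For (f), deleting $N$, which has $\ell+1<\ell+2$ vertices, isolates $x$ from the remaining $n-\ell-2\ge 1$ vertices. So $G$ is not $(\ell+2)$-connected. This also matches the range $\ell+2\le n-1$.

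For (c), $G\in\mathcal{J}^\ell_{n,r}$, so $G[N]$ is traceable. The natural obstruction is a path through all of $N$, which has length $\ell$. Take such a path $Q=v_0v_1\cdots v_\ell$ in $G[N]$. Any Hamiltonian cycle must visit $x$, and both cycle-neighbors of $x$ lie in $N$. If the cycle contains $Q$, then $v_1,\dots,v_{\ell-1}$ already have both cycle-edges inside $Q$. So $x$ must be adjacent on the cycle to $v_0$ and $v_\ell$, which closes the cycle $x v_0 \cdots v_\ell x$ of length $\ell+2<n$. Here $\ell+2<n$ holds because $\ell\le n-3$. For $\ell=0$, $Q$ is the single vertex $v_0$, and $x$ would need two distinct neighbors but has only one, so the conclusion still holds.

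The main obstacle is checking these degenerate boundary cases: $\ell=0$ in (c), small $n$ in (b), and $\ell=1$ in (e), where $S=\emptyset$. In each of these, the vertex counts stated above keep the argument valid.
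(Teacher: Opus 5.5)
Your proof is correct and matches the paper's argument step for step. You take the edge counts from \cref{prop:gell is extremal}, and in each part you exhibit the same obstruction at the vertex of degree $\ell+1$: it is isolated, or a forced short $a$--$b$ path or short cycle passes through it, or you delete $\ell$, $\ell-1$, or all $\ell+1$ of its neighbors. Your extra vertex-count bookkeeping (e.g.\ $n-\ell+1\ge 4$ in (e), and the $\ell=0$ case in (c)) only makes explicit what the paper leaves implicit. In (b), $n\ge 4$ follows directly from $\ell=1\le n-3$, so the detour through $n\ge r+1$ is unnecessary.
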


\begin{proof}The edge counts are given by \cref{prop:gell is extremal}. Since $G \in \Gell$, $G$ contains a vertex $v$ of degree $\ell+1$.
    \begin{enumerate}[(a)]
        \item If $\ell=-1$, then $G$ contains an isolated vertex so has no Hamiltonian path.
        \item If $\ell=1$, then $G$ contains a vertex $v$ of degree two. Let $x$ and $y$ be the neighbors of $v$. Any path from $x$ to $y$ containing $v$ contains only three vertices, but $n \ge 4$, so there is no Hamiltonian path from $x$ to $y$.
        \item Since $G \in \mathcal{J}^\ell_{n,r}$, the vertex $v$ which has degree $\ell+1$ has a traceable neighborhood. Let $P$ be a path of length $\ell$ in the neighborhood of $v$, and let $u$ and $w$ be the endpoints of $P$. Any cycle containing $v$ and $P$ must contain the edges $\set{u,v}$ and $\set{v,w}$ and therefore contains no other vertices. The length of such a cycle is $\ell+2 \le n-1$, so no Hamiltonian cycle contains $P$.
        \item Deleting any $\ell$ of the neighbors of $v$ yields a graph containing a vertex of degree $1$, which therefore is not Hamiltonian.
        \item Deleting any $\ell-1$ neighbors of $v$ yields a non-Hamiltonian-connected graph. 
        \item Deleting the $\ell+1$ neighbors of $v$ yields a disconnected graph.\qedhere
    \end{enumerate}
\end{proof}

We use \cref{kpathcycle} to characterize the non-$k$-path Hamiltonian graphs in the family $\Gell$ in \cref{kpathtrace}.

\begin{lem}\label{kpathcycle}
    Let $G \in \Gell[r][n][k]$ and $n\ge 8+k+(2k+12)/(r-2)$ where $r \ge 3$ and $k \ge 1$. If $G$ contains a path of length at most $k+4$ that contains the exceptional vertex $v$ of degree $\ell+1$ as an internal vertex then the path is contained in a Hamiltonian cycle.
\end{lem}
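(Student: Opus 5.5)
Since $G \in \Gell[r][n][k]$, $G$ is $T:=\T[n-1][r]$ together with a vertex $v$ of degree $k+1$. Let $Q$ be a path of length at most $k+4$ with $v$ as an internal vertex. Then $Q$ has at most $k+5$ vertices. The plan is to treat $Q$ as a segment whose two endpoints $a,b$ lie in $T$, and to close it up into a Hamiltonian cycle using only the vertices of $T$ not on $Q$. Concretely, let $W = V(T)\setminus V(Q)$. It suffices to find a Hamiltonian path in $T[W\cup\{a,b\}]$ from $a$ to $b$. The subgraph $T[W\cup\{a,b\}]$ is a complete $r$-partite graph, since $T$ is. Concatenating that path with $Q$ gives the Hamiltonian cycle, as every vertex of $G$ other than $v$ lies in $T$.

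The key step is to show that a complete multipartite graph $H=T[W\cup\{a,b\}]$ with nearly balanced parts is Hamiltonian-connected between $a$ and $b$. I will handle this with a degree argument in the style of \cref{thm:stablemindeg}. Write $N=|V(H)|$, so $N \ge n-1-(k+5)+2 = n-k-4$.

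Each part of $T$ has size at most $\ceil{(n-1)/r}$, so every vertex of $H$ misses at most $\ceil{(n-1)/r}$ vertices of $H$. Hence
\[\delta(H) \ge N-\ceil{(n-1)/r}.\]
Hamiltonian-connectedness is $(N+1)$-stable by \cref{table}, so by \cref{thm:stablemindeg} it suffices that $\delta(H)\ge (N+1)/2$. This reduces to
\[N \ge 2\ceil{(n-1)/r}+1.\]
Using $N\ge n-k-4$ and $\ceil{(n-1)/r}\le (n-1)/r+1$, it is enough that
\[n-k-4 \ge \frac{2(n-1)}{r}+3, \quad\text{i.e.}\quad n\left(1-\frac{2}{r}\right) \ge k+7-\frac{2}{r}.\]
The hypothesis $n\ge 8+k+(2k+12)/(r-2)$ gives $n(r-2)/r \ge (k+8)(r-2)/r+(2k+12)/r$, which comfortably exceeds $k+7$. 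It also gives $N\ge 4 \ge n(P)$ for Hamiltonian-connectedness.

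One subtlety is that \cref{thm:stablemindeg} gives Hamiltonian-connectedness of $H$ only when $a\ne b$, which holds because $Q$ has length at least $2$. A second subtlety is that we also need $a,b$ to be adjacent in $H$ only to get a path, not a cycle, so no problem arises there. If $Q$ has only the single edge structure around $v$ (length $2$), the same argument applies with $a,b$ the neighbours of $v$ on $Q$.

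The main obstacle I expect is the bookkeeping with parts: removing $Q$ may unbalance $H$. I avoided this by using the crude bound that each part of $H$ is at most a part of $T$, rather than tracking which parts $Q$ hits. The only real work is checking that the stated lower bound on $n$ makes the resulting inequality hold for all $r\ge3$ and $k\ge1$, including the ceiling slack. If that arithmetic turns out to be tight in an edge case, say $r=3$, I would refine it by noting that $Q$ removes at least $\lfloor (|V(Q)|-2)/2\rfloor$ vertices from parts other than the largest.
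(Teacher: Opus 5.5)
Your proof is correct, but it takes a different route from the paper.

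\textbf{The paper's route.} The paper deletes all of $V(P)$. It then uses \cref{prop:partitekHam} with $k=0$ to get a Hamiltonian cycle $C'$ of the complete multipartite graph $G-V(P)$, and splices $P$ in by hand. First $p_i$ is joined to one of two consecutive vertices $a,b$ of $C'$. If $p_1$ lies in the same part as $b$, the vertex $b$ is moved so that it sits between some consecutive pair of $C'$ lying outside its part.

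\textbf{Your route.} You keep the endpoints $a,b$ of $Q$ in the leftover graph $H$ and ask for a Hamiltonian $a$--$b$ path. So you use Hamiltonian-connectedness of $H$ rather than Hamiltonicity, via \cref{thm:stablemindeg} with $\ell=1$ (equivalently \cref{prop:partitekHamconn} with $k=1$).

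\textbf{Comparison.} Your approach pushes the ``same part'' difficulty into the black box and removes the case analysis entirely. The price is a slightly stronger cited theorem (the $(N+1)/2$ minimum-degree condition instead of Dirac's). It also needs less from $n$. Since $v\notin T$, you actually have $N\ge n-k-3$, not just $n-k-4$. Combined with $\lceil (n-1)/r\rceil\le (n+r-2)/r$, the condition $N\ge 2\lceil (n-1)/r\rceil+1$ already holds once $n\ge k+6+(2k+8)/(r-2)$. The paper's splicing instead needs every part to be strictly smaller than half of $n-k-5$, and the stated hypothesis on $n$ is calibrated to exactly that.

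\textbf{Arithmetic slip.} One sentence in your verification is wrong. One has $(k+8)(r-2)/r+(2k+12)/r=k+8-4/r$, and this does \emph{not} exceed $k+7$ when $r\in\{3,4\}$; for $r=3$ it equals $k+20/3$. However, your reduction only requires $n(1-2/r)\ge k+7-2/r$. Since $k+8-4/r\ge k+7-2/r$ for all $r\ge 2$, the needed inequality holds for every $r\ge 3$. So the conclusion stands, and no refinement for $r=3$ is needed.
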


\begin{proof}
    Let $G$ be such a graph with path $P = (p_1,\dots,p_i)$ of length $i-1 \le k+4$ containing $v$ as an internal vertex of $P$. Since $n \ge 8+k+(2k+12)/(r-2)$, it follows that \begin{align}
        \lceil (n-1)/r\rceil &\le (n+r-2)/r \le (n-k-6)/2<(n-(k+5))/2.\label{G'partsize}
    \end{align} Let $G' = G - V(P)$, so $G'$ is a complete multipartite graph on $n-i \ge n-(k+5)$ vertices with parts of size at most $\lceil (n-1)/r\rceil< (n-(k+5))/2$ by \cref{G'partsize}. By \cref{prop:partitekHam} with $k=0$, $G'$ is Hamiltonian.

    Let $C'$ be a Hamiltonian cycle in $G'$. As $n \ge 8+k+(2k+12)/(r-2) \ge 8+k$, we have $|V(C')| \ge n-(k+5) \ge 3$. Let $a$, $b$, and $c$ be consecutive vertices in $C'$. As $a$ and $b$ are adjacent in $G' \subset G$, they lie in different parts of $G$. Then $p_{i}$ is adjacent to at least one of $a$ and $b$. Without loss of generality, say $p_i$ is adjacent to $a$. Then concatenating $P$ and $C'$ yields a Hamiltonian path $(p_1, \dots, p_i, a, \dots, b)$ in $G$. 

    If $p_1$ is adjacent to $b$, we have a Hamiltonian cycle $$(p_1, \dots, p_i, a, \dots, b, p_1)$$ in $G$, and so $G$ has a Hamiltonian cycle that contains $P$. If $p_1$ is not adjacent to $b$ then $p_1$ and $b$ lie in the same part, and so $p_1$ is adjacent to $c$.

    By \cref{G'partsize}, every part of $G'$ contains fewer than half the vertices of $G'$, so $C'$ has a pair of consecutive vertices $(u_1, u_2)$ both of which are in parts not containing $b$ so are adjacent to $b$. Replacing $(u_1,u_2)$ with $(u_1,b,u_2)$ in $C'$ and deleting $b$ from $(a,b,c)$ in $C'$ gives the Hamiltonian cycle $$(p_1, \dots, p_i, a, \dots, u_2, b, u_1, \dots, c, p_1)$$ in $G$, and so $G$ has a Hamiltonian cycle that contains $P$.
\end{proof}

\begin{prop}\label{kpathtrace}
    Let $r\ge 3$, $k \ge 0$, and $n \ge \max\set{8+k+(2k+12)/(r-2), k+2r+k/(r-1)}$. If $G \in \Gell[r][n][k]$ is not $k$-path Hamiltonian then $G \in \mathcal{J}^k_{n,r}$. 
\end{prop}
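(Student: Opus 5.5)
We argue by contrapositive: suppose $G \in \Gell[r][n][k]$ but $G \notin \mathcal{J}^k_{n,r}$. Then the neighborhood $N(v)$ of the exceptional vertex $v$ (of degree $k+1$) is not traceable. We show that every path $P$ of length at most $k$ lies in a Hamiltonian cycle, so $G$ is $k$-path Hamiltonian. The cases $k=0$ and $k\ge 1$ are handled together where possible. We split according to where $v$ sits relative to $P$.

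\textbf{Case 1: $v$ is an internal vertex of $P$.} Here \cref{kpathcycle} applies directly when $k \ge 1$, since $P$ has length at most $k \le k+4$. When $k=0$, $P$ is a single vertex and $v$ cannot be internal, so this case does not arise.

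\textbf{Case 2: $v$ is not on $P$.} We extend $P$ to a longer path $P^+$ that contains $v$ as an internal vertex and has length at most $k+4$, and then apply \cref{kpathcycle} to $P^+$.

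1. Since $N(v)$ has $k+1$ vertices and $P$ has at most $k+1$ vertices, we first try to find two neighbors $a,b$ of $v$ outside $V(P)$. We use $\ell$ and $k$ interchangeably, since $\ell = k$ here.
2. We then connect an endpoint of $P$ to $a$ through at most one intermediate vertex of the Tur\'an part, and form $\dots,a,v,b$. Because the Tur\'an graph $\T[n-1][r]$ has parts of size about $n/r$, any two vertices have many common neighbors, so connectors avoiding $P$ exist. The resulting path has length at most $k+4$.
3. The difficulty is that $N(v)$ may be almost entirely contained in $V(P)$. If at most one neighbor of $v$ lies outside $P$, we instead route through two neighbors that are endpoints of $P$ or are adjacent to them. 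If all of $N(v)$ lies in $V(P)$, then $P$ has exactly $k+1$ vertices and $V(P)=N(v)$. In that situation $P$ is a Hamiltonian path of $N(v)$, contradicting the assumption that $N(v)$ is not traceable.

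So non-traceability is used exactly in the subcase $V(P)=N(v)$. When exactly one neighbor $a$ of $v$ lies outside $P$, we extend by $P, \dots, a, v$ and close the cycle through $v$'s other neighbors, which lie on $P$. This needs care: a Hamiltonian cycle must enter and leave $v$ through two of its neighbors. We would instead use $a$ together with an endpoint of $P$ that is adjacent to $v$, if one exists. Otherwise we use the interior structure of $P$.

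\textbf{Case 3: $v$ is an endpoint of $P$.} If $v$ has a neighbor outside $P$, we prepend it, so that $v$ becomes internal, and apply \cref{kpathcycle}. If all of $v$'s neighbors lie on $P$, then since $P$ has at most $k+1$ vertices including $v$, $P$ has fewer than $k+2$ vertices, so some neighbor lies off $P$ anyway. Hence the first option always applies.

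\textbf{Remaining requirements.} We need $n$ large enough to guarantee common-neighbor connectors avoiding $P$; this is where the bound $n \ge k+2r+k/(r-1)$ should enter. We also need $N(v)$ to lie within the Tur\'an graph so that parts of size about $(n-1)/r$ leave room.

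\textbf{Expected main obstacle.} Case 2 with $|N(v)\setminus V(P)|\le 1$ is the delicate step. A Hamiltonian cycle through $v$ must use two edges at $v$, and these must be compatible with $P$. Non-traceability of $N(v)$ has to rule out the configurations where this is impossible. In particular, when $|N(v)\setminus V(P)|=1$, we would show that $v$ can be inserted between the outside neighbor $a$ and an endpoint of $P$, or between two consecutive vertices of $P$ that are both adjacent to $v$. The latter replaces an edge of $P$, which is not allowed, so we must rely on the endpoint option. If neither endpoint of $P$ is adjacent to $v$, we expect that adding $a$ to $P$ produces a Hamiltonian path in $N(v)$ unless some structural obstruction holds, and we would check this against the condition from \cref{prop:partitetraceable}.
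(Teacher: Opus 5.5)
Your overall plan matches the paper's. You extend $P$ to a path of length at most $k+4$ that has $v$ as an internal vertex, invoke \cref{kpathcycle}, and use non-traceability of $N(v)$ only when $V(P)=N(v)$. Your treatment of three cases is essentially the paper's: $v\in V(P)$, $N(v)\subseteq V(P)$, and two neighbors $a,b$ of $v$ lying off $P$. A few details need filling in:
\begin{itemize}
\item \emph{The connector.} If neither $a$ nor $b$ is adjacent to $p_1$ or $p_i$, then $a,b,p_1,p_i$ all lie in one part $X$ of $\T[n-1][r]$. Any vertex outside $X\cup V(P)$ is then a common neighbor of $b$ and $p_1$ that avoids everything already used. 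Such a vertex exists because at most $k$ vertices of $P$ lie outside $X$, while the bound $n\ge k+2r+k/(r-1)$ leaves more than $k$ vertices outside $X$.
\item \emph{The case $k=0$.} \cref{kpathcycle} requires $k\ge 1$, so dispose of $k=0$ first. A one-vertex neighborhood is traceable, so $\mathcal{J}^0_{n,r}=\Gell[r][n][0]$ and your contrapositive hypothesis is vacuous.
\item \emph{The case $P=(v)$.} Here you must add a neighbor on each side of $v$, not just prepend one, since prepending alone leaves $v$ as an endpoint.
\end{itemize}

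The genuine gap is the subcase where exactly one neighbor $a$ of $v$ lies off $P$. You leave it open as the ``expected main obstacle'' and propose falling back on the ``interior structure of $P$'' or \cref{prop:partitetraceable}. That is not an argument, and none of it is needed. The missing idea is a simple count. The other $k$ neighbors of $v$ all lie on $P$, and $|V(P)|\le k+1$, so at most one vertex of $P$ is a non-neighbor of $v$. Hence some endpoint of $P$, say $p_1$, lies in $N(v)$; if $P$ is a single vertex, that vertex is in $N(v)$ because $k\ge 1$. Then $(a,v,p_1,\dots,p_i)$ is a path of length at most $k+2$ with $v$ internal, and \cref{kpathcycle} finishes. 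The configuration you worry about, where neither endpoint of $P$ is adjacent to $v$, would force $|V(P)|\ge k+2$, so it never occurs. As written, your proof does not establish the proposition in this case.
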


\begin{proof}For $k = 0$, $\mathcal{J}^k_{n,r} = \Gell[r][n][k]$ because the neighborhood of a vertex of degree one is traceable. Now suppose $k \ge 1$.

    Suppose $G$ is not $k$-path Hamiltonian. Then $G$ contains a path $P=(p_1, \ldots, p_i)$ of length $i-1 \le k$ such that $P$ is not contained in a Hamiltonian cycle. We show that if $V(P)$ does not contain the neighborhood $N(v)$ of the exceptional vertex $v$ of degree $\ell+1$, then $P$ is contained in a Hamiltonian cycle. That is, we show that $V(P)$ contains $N(v)$, and since $|V(P)| \le k+1 = |N(v)|$, the path $P$ is a Hamiltonian path of the induced subgraph in the neighborhood of $v$, so $G \in \mathcal{J}^k_{n,r}$.

    We show that $P$ can be extended to a path of length at most $k+4$ that contains $v$ as an internal vertex and so can be extended to a Hamiltonian cycle by \cref{kpathcycle} as $n \ge 8+k+(2k+12)/(r-2)$. If $v \in V(P)$ and $v$ is an internal vertex, we are done. If $v \in V(P)$ and $v$ is an end vertex, say $v=p_1$, there is a $u \in N(v) \setminus V(P)$ such that $(u,v,\dots,p_i)$ is a path of length at most $k+1$ that contains $v$ as an internal vertex. Therefore we assume $v \notin V(P)$. We consider two cases: if at least two vertices of $N(v)$ are not contained in the path $P$ and if exactly one vertex of $N(v)$ is not contained in the path $P$.

    \begin{case} There are distinct vertices $u_1$ and $u_2$ in $N(v)\setminus V(P)$.

    If there is any edge from $u_1$ or $u_2$ to $p_1$ or $p_i$, by concatenating $(u_1, v, u_2)$ and $P$, we have a path of length at most $k+3$ that contains $v$ as an internal vertex. If not, then $u_1, u_2, p_1, p_i$ are all in the same part. Let $X$ be the part containing $\{u_1, u_2, p_1, p_i\}$. As at most $i-1\le k$ vertices in $\set{p_2, \dots, p_{i-1},v}$ are distributed over the $r-1$ parts of $G - X$, there is always a part in $G$ containing at most $\floor{k/(r-1)}$ vertices of $\set{p_2, \dots, p_{i-1},v}$. As $n\ge k+2r+k/(r-1)$ and $r \ge 3$, the number of vertices in this part of $G-X$ which are also not in $\set{p_2, \dots, p_{i-1},v}$ is at least \begin{align}\label{vpath} \lfloor(n-1)/r\rfloor -\lfloor k/(r-1) \rfloor &\ge (n-r)/r - k/(r-1) \ge 1 \end{align} and so there is a vertex $u_3$ in $G-(V(P)\cup\set{u_1,v,u_2})$ that is adjacent to both $u_2$ and $p_1$. The path $(u_1, v, u_2, u_3, p_1, \dots, p_i)$ has length at most $k+4$ and contains $v$ as an internal vertex. 

    \end{case}

    \begin{case} There is exactly one vertex $u \in N(v)\setminus V(P)$, so $N(v)\setminus\set{u}\subseteq V(P)$.
    
    As $P$ has length at most $k$ and $|N(v)|=k+1$, at least one of the end vertices of $P$ (without loss of generality, say $p_1$) is in $N(v)$. Then the path $(u, v, p_1, \dots, p_i)$ of length at most $k+2$ contains $v$ as an internal vertex.\qedhere
    \end{case}
\end{proof}

Thus, by \cref{kpathtrace} and \cref{prop:gell}\ref{part:lpath}, for large enough $n$, the graphs in $\Gell[r][n][k]$ which are not $k$-path Hamiltonian are exactly those in $\mathcal{J}^k_{n,r}$. We now consider the graphs in the family $\Hl[r][n][k]$.

\begin{prop}\label{prop:kpathham}
    Let $G \in \Hl$ where $0 \le \ell \le n-3$ and $r\ge 3$, and suppose $G$ is a complete multipartite graph. Then $G$ is not $\ell$-path Hamiltonian.
\end{prop}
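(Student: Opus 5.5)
The plan is to show that $G$ has the form of \cref{Hfamell}\ref{part:graph}, namely one large part of size $(n+1-\ell)/2$, and then to apply \cref{lem:partite_kpath} with $k=\ell$.

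First I would pin down the structure. By \cref{Hfamsize}, $G$ consists of an independent set $V(J)$ of $(n-1-\ell)/2$ vertices, each of degree $(n-1+\ell)/2$, together with $H\cong\T[(n+1+\ell)/2][r]$, whose parts all have size at most $2$. Since $J$ is independent, every neighbor of a vertex of $J$ lies in $H$. Because $|V(H)|=(n+1+\ell)/2$, each vertex of $J$ misses exactly one vertex of $H$.

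Next I would use that $G$ is complete multipartite. Nonadjacency is then an equivalence relation whose classes are the parts. The independent set $V(J)$ therefore lies inside a single part $X$ of $G$. Each vertex of $J$ misses exactly one vertex $w$ of $H$, so $w\in X$. Any other vertex of $H$ in $X$ would also be missed by the vertices of $J$, which is impossible. Hence $X=V(J)\cup\{w\}$, and
\[
|X|=\frac{n-1-\ell}{2}+1=\frac{n+1-\ell}{2}>\frac{n-\ell}{2}.
\]

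It remains to find a path of length $\ell$ in $G-X=H-w$. This graph is complete multipartite on $N=(n-1+\ell)/2$ vertices, with every part of size at most $2$. Since $\ell\le n-3$, we have $N\ge \ell+1$. I would split by the size of $N$:
\begin{itemize}
\item If $N\ge 3$, the largest part has size at most $2\le (N+1)/2$, so $H-w$ is traceable by \cref{prop:partitetraceable}. Its Hamiltonian path has $N\ge\ell+1$ vertices and contains a subpath of length $\ell$.
\item If $N\le 2$, then $\ell\le 1$. The case $\ell=0$ is trivial, since a single vertex is a path of length $0$.
\item If $\ell=1$ and $N=2$, then $n=4$ and $H\cong\T[3][r]$ with $r\ge 3$. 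All parts of $H$ are singletons, so the two vertices of $H-w$ are adjacent and form a path of length $1$.
\end{itemize}

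Finally, with $|X|>(n-\ell)/2$ and a path of length $\ell$ in the remaining parts, \cref{lem:partite_kpath} with $k=\ell$ shows that $G$ is not $\ell$-path Hamiltonian. The step needing the most care is identifying $X$ exactly as $V(J)\cup\{w\}$; after that, only the small-$N$ boundary cases remain to be checked.
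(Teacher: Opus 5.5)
Your proof is correct and follows the same route as the paper. Both arguments identify the part $V(J)\cup\{w\}$ of size $(n+1-\ell)/2$, show that $H-w$ is traceable via \cref{prop:partitetraceable}, and conclude with \cref{lem:partite_kpath}.

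Your explicit split on $N=(n-1+\ell)/2$ is a cleaner check of the traceability hypothesis than the paper's. The paper's step invokes $n\ge 4r-\ell-3$, whereas \cref{Hfamsize} actually gives the reverse inequality. The needed bound $m\le (N+1)/2$ still holds, because $m=2$ forces $|V(H)|\ge r+1\ge 4$.
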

\begin{proof}
    By \cref{Hfamsize}, $n \le 4r-\ell-3$ and $G$ consists of 
    an independent set $V(J)$ of $(n-1-\ell)/2$ vertices of degree $(n-1+\ell)/2$ and $H \cong \T[(n+1+\ell)/2][r]$ containing at least one partite set of size 1 and all other partite sets of size at most 2. 
    Furthermore, all vertices in partite sets of size $2$ are adjacent to all vertices of $J$. 
    
    Let $x, y \in V(J)$, and let $z$ be a vertex in a part of size $1$ in $H$. Since $G$ is complete multipartite, and $x$ and $y$ are nonadjacent, $x$ and $y$ are in the same part of $G$. Then $z$ is either in a different part or in the same part of $G$, so either adjacent or nonadjacent to both $x$ and $y$. Each vertex of $J$ is nonadjacent to one vertex of $H$, and because $G$ is complete multipartite, the vertices of $J$ must all be nonadjacent to the same vertex $z$ of $H$. Then $V(J) \cup \set{z}$ is a partite set of $G$ of size $(n+1-\ell)/2$, and $H-z \cong \T[(n-1+\ell)/2][r-1]$. 
    
    Let $m$ be the size of a largest part in $H-z$. Recall that by \cref{Hfamsize} we have $m \le 2$ and $n \ge 4r-\ell-3$. Since $r \ge 3 \ge m+1$, we have $n \ge 4r-\ell-3\ge 4(m+1)-3-\ell \ge 4m-\ell-1$, and $n \ge 4m-\ell-1$ implies $m \le ((n-1+\ell)/2+1)/2$. Then by \cref{prop:partitetraceable}, $H-z$ is traceable, i.e. has a path on $(n-1+\ell)/2 \ge \ell+1$ vertices; the last inequality follows from $n \ge \ell+3$. 
    By \cref{lem:partite_kpath}, $G$ is not $\ell$-path Hamiltonian. 
\end{proof}

\begin{cor}\label{Hfam}
Let $G \in \Hl[r][n][\ell]$ where $\ell \ge -1$ and $n \ge 5+\ell$. 
\begin{enumerate}[(a)]
    \item For $\ell = -1$, if $G$ is not traceable then $G$ is a complete multipartite graph with one partite set of size $(n+2)/2$ and the other partite sets of sizes at most 2.
    \item For $\ell = 1$, if $G$ is not Hamiltonian-connected then $G$ is a complete multipartite graph with one partite set of size $n/2$ and the other partite sets of sizes at most 2.
    \item For $0 \le \ell \le n-3$, if $G$ is not $\ell$-Hamiltonian then $G$ is a complete multipartite graph with one partite set of size $(n+1-\ell)/2$ and the other partite sets of sizes at most 2. 
    \item For $0 \le \ell \le n-3$, if $G$ is not $\ell$-path Hamiltonian then $G$ is a complete multipartite graph with one partite set of size $(n+1-\ell)/2$ and the other partite sets of sizes at most 2. 
    \item For $1 \le \ell \le n-3$, if $G$ is not $\ell$-Hamiltonian-connected then $G$ is a complete multipartite graph with one partite set of size $(n+1-\ell)/2$ and the other partite sets of sizes at most 2.
    \item $G$ is $(\ell+2)$-connected.
\end{enumerate}
\end{cor}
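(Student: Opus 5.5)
The plan is to reduce parts (a)--(e) to \cref{Hfamell} via \cref{thm:kstable}, and to handle (f) directly from the structure in \cref{Hfamsize}.

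\textbf{Parts (a)--(e).} Let $P$ be the relevant property with stability parameter $\ell$ from \cref{table}: traceability for $\ell=-1$, Hamiltonian-connectedness for $\ell=1$, and $\ell$-Hamiltonicity, $\ell$-path Hamiltonicity or $\ell$-Hamiltonian-connectedness for the remaining parts. The hypothesis $n \ge 5+\ell$ gives $n \ge n(P)$ in each case, since $n(P)\le \ell+3$.

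Suppose $G\in\Hl$ lacks $P$. Then \cref{thm:kstable} gives an index $1\le i\le (n-1-\ell)/2$ with $d_i\le i+\ell$ and $d_{n-i-\ell}\le n-i-1$. This contradicts alternative \ref{part:graph}'s companion \ref{part:deg} of \cref{Hfamell}, which applies because $n\ge \ell+5$ and $\ell\ge -1$. Hence alternative \ref{part:graph} of \cref{Hfamell} holds: $G$ is complete multipartite with one part of size $(n+1-\ell)/2$ and all other parts of size at most $2$.

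Specializing $\ell=-1$ gives a part of size $(n+2)/2$, and $\ell=1$ gives a part of size $n/2$. This settles (a) through (e) uniformly.

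\textbf{Part (f).} Here I argue directly from \cref{Hfamsize}. Write $G$ as $H\cup J$, where $H\cong\T[(n+1+\ell)/2][r]$ and $V(J)$ is an independent set of $(n-1-\ell)/2\ge 2$ vertices, each of degree $(n-1+\ell)/2$. Since $|V(H)|=(n+1+\ell)/2$, each vertex of $J$ misses exactly one vertex of $H$. The parts of $H$ have size at most $2$, and every vertex in a part of size $2$ is adjacent to all of $J$.

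Take a set $S$ of at most $\ell+1$ vertices and show $G-S$ is connected. First consider $H-S$, a complete multipartite graph. It is connected unless it is empty or lies inside a single part.

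\begin{itemize}
\item Each surviving $J$-vertex has at least $(n-1+\ell)/2-(\ell+1)=(n-3-\ell)/2\ge 1$ neighbors in $H-S$, so every $J$-vertex attaches to $H-S$.
\item $H$ has $(n+1+\ell)/2\ge \ell+3$ vertices, so $|V(H-S)|\ge 2$.
\end{itemize}

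The remaining case is that $H-S$ consists of one part of size $2$. That needs $|V(H)|-2\le \ell+1$, i.e.\ $(n+1+\ell)/2\le \ell+3$, which forces $n\le \ell+5$, so $n=\ell+5$ and $S\subseteq V(H)$. In that case both remaining vertices are adjacent to all of $J$, so $G-S$ is still connected.

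\textbf{Main obstacle.} I expect the delicate step to be this boundary counting in (f), at $n=\ell+5$ with small $H$, including the degenerate situation where $H$ has fewer than $r$ parts. If the direct argument proves awkward, I would instead check the degree-sequence condition of \cref{thm:sconnconditions} with $k=\ell+2$, using the explicit degrees of $G$ from \cref{Hfamsize}.
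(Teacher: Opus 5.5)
Your treatment of (a)--(e) is the paper's argument. \cref{thm:kstable} produces an index that violates alternative (b) of \cref{Hfamell}, so alternative (a) holds. Your check that $n\ge \ell+5$ gives $n\ge n(P)$ is a detail the paper leaves implicit.

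For (f) you take a genuinely different route. The paper argues through the degree sequence. It reads off the degrees of the complete multipartite graph in alternative (a) of \cref{Hfamell} and verifies that the Bondy--Boesch condition of \cref{thm:sconnconditions} with $k=\ell+2$ fails.

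You instead argue directly from the structure in \cref{Hfamsize}. For $|S|\le \ell+1$:
\begin{itemize}
\item every surviving $J$-vertex keeps at least $(n-3-\ell)/2\ge 1$ neighbours in $H-S$;
\item $|V(H-S)|\ge 2$;
\item $H-S$ can fail to be a connected complete multipartite graph only in the boundary case $n=\ell+5$, $S\subseteq V(H)$. There the two surviving vertices form a part of size $2$ joined to all of $J$.
\end{itemize}
This is correct. The worry you raise about $H$ having fewer than $r$ nonempty parts is harmless, since nothing in the argument depends on the number of parts.

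What your route buys is coverage of every $G\in\Hl$ at once. The paper's written proof of (f) says ``by condition (a)''. But for (f) there is no non-property hypothesis forcing alternative (a) of \cref{Hfamell}. The graphs satisfying only alternative (b) need an extra, easy remark: $d_{n-i-\ell}\le d_{n-\ell-1}$ for $i\ge 1$, so alternative (b) already negates the Bondy--Boesch condition. Your argument avoids that case split and makes transparent exactly where $n\ge \ell+5$ is used.

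The paper's approach, in turn, is shorter once the degree sequence is in hand. It also stays within the degree-condition framework used throughout the paper; your fallback plan via \cref{thm:sconnconditions} is exactly that route.
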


\begin{proof}
    By \cref{thm:kstable} and \cref{table}, we have the negation of condition \ref{part:deg} in \cref{Hfamell} (with $\ell=-1$ and $\ell=1$ used for the first two parts, respectively). 
    Therefore condition \ref{part:graph} holds. For the last part, by condition \ref{part:graph}, for all $1 \le i < (n-1-\ell)/2$ we have $d_i=(n-1+\ell)/2>i+\ell$, and for $i=(n-1-\ell)/2$, since $n \ge \ell+5$, we have $n-1-\ell > (n+1-\ell)/2$, so $d_{n-1-\ell} \ge n-2 \ge n-i$. By \cref{thm:sconnconditions}, $G$ is $(\ell+2)$-connected.
\end{proof}

\section{Edge Conditions and Extremal Graphs}\label{sec:edge}

Now we can determine all of the extremal graphs outside of $\Gell$ and prove our main edge extremal results.

\subsection{Exceptional extremal graphs}

Generalizing \cite[Lemma 5.3]{DK26}, we determine all extremal graphs outside of $\Gell$. Recall from \cref{thm:degcondsummary} that they occur only when $4 \le r \le 7$ and from \cref{lem:nrange} that they occur only for small values of $n$. We use the notation $\Hlhat$ in Sections \ref{subsec:omnibus} and \ref{subsec:kPH}.

\begin{defn}
    We write $\Hlhat$ for the set of graphs listed in \cref{lem:exceptions}.
\end{defn}

\begin{lem}\label{lem:exceptions}
    If $G \in \Hl$ for $4 \le r \le 7$, $n \ge r+2$, $n \geq \max\{3+ \ell + \frac{4(\ell+2)}{r-3},5+\ell+\frac{r+2\ell+7}{2r-2}\}$, and $e(G) = e(\T[n-1][r])+\ell+1$, then either 
    \begin{itemize}
        \item $\ell=-1$ and $(G,r,n) \in \set{(K_{4,1,1}, 4, 6), (K_{5,1,1,1}, 4, 8)}$,
        \item $\ell=0$ and $(G,r,n) \in \set{(K_{6,2,2,1},4,11), (K_{4,1,1,1},5,7), (K_{5,1,1,1,1},5,9)}$,
        \item $\ell=1$ and $(G,r,n) \in \set{(K_{4,1,1,1,1},6,8), (K_{5,2,1,1,1},5,10), (K_{5,1,1,1,1,1},6,10)}$,
        \item $\ell=2$ and $(G, r, n) \in \set{(K_{6,2,2,2,1}, 5, 13), (K_{5,2,1,1,1,1}, 6, 11), (K_{5,1,1,1,1,1,1}, 7, 11), (K_{4,1,1,1,1,1}, 7, 9)}$,
        \item $\ell=3$ and $(G, r, n) = (K_{5,2,1,1,1,1,1}, 7, 12)$, 
        \item $\ell=4$ and $(G, r, n) \in \set{(K_{6,2,2,2,2,1}, 6, 15), (K_{5,2,2,1,1,1,1}, 7, 13)}$, or
        \item $\ell=6$ and $(G, r, n) = (K_{6,2,2,2,2,2,1}, 7, 17)$.
    \end{itemize} 
    The converse holds too: if $(G,\ell,r,n)$ is in the list above then $G \in \Hl$, $4 \le r \le 7$, $n \ge r+2$, $n \geq \max\{3+ \ell + \frac{4(\ell+2)}{r-3},5+\ell+\frac{r+2\ell+7}{2r-2}\}$, and $e(G) = e(\T[n-1][r])+\ell+1$.
\end{lem}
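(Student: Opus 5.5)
We prove the lemma by reducing it to a finite case check: \cref{Hfamsize} bounds $n$, and \cref{lem:nrange} narrows the range further. The plan has four steps.

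\textbf{Step 1: structure and edge count.} By \cref{Hfamsize}, a graph $G \in \Hl$ has $n \le 4r-\ell-3$. It consists of an independent set $J$ of size $a=(n-1-\ell)/2$ and $H \cong \T[(n+1+\ell)/2][r]$. Every part of $H$ has size at most $2$, and at least one part has size $1$. Every vertex of $J$ has degree $(n-1+\ell)/2 = |H|-1$, so each $x\in J$ misses exactly one vertex of $H$. That vertex must lie in a part of size $1$, because $x$ is adjacent to all vertices in parts of size $2$.

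Hence
\[
e(G) = e(\T[(n+1+\ell)/2][r]) + \frac{(n-1-\ell)(n-1+\ell)}{4}.
\]
This depends only on $(n,r,\ell)$ and not on how the missing vertices are assigned. The extremal condition $e(G)=e(\T[n-1][r])+\ell+1$ is therefore a Diophantine condition on $(n,r,\ell)$ alone.

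\textbf{Step 2: reduce to a finite range.} \cref{lem:nrange} applies to $G$, since $G\notin\Gell$ when $n\ge \ell+5$: no vertex of $G$ has degree $\ell+1$. So $n$ lies in the ranges listed there. We also need:
\begin{itemize}
\item the parity condition $n\not\equiv \ell \pmod 2$, so that $\Hl\neq\emptyset$;
\item $n\ge r+2$;
\item $n\ge \max\{3+\ell+\frac{4(\ell+2)}{r-3},\,5+\ell+\frac{r+2\ell+7}{2r-2}\}$;
\item $(n+1+\ell)/2\le 2r-1$, so that $H$ has all parts of size at most $2$ and at least one of size $1$.
\end{itemize}
Together these give a finite list of triples $(\ell,r,n)$ with $4\le r\le 7$. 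The combined lower and upper bounds force $\ell\le 6$.

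\textbf{Step 3: check each triple.} For each triple, compute both sides of the edge equation exactly, using the floor formula of \cref{prop:turanedgecount}. Keep those triples where equality holds.

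\textbf{Step 4: identify the graphs.} For each surviving triple, determine which graphs arise. This step is the main obstacle, because the edge count alone does not distinguish the different ways of attaching $J$. Fix the triple and let $H$ have $s$ singleton parts and $r-s$ parts of size $2$.

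If all vertices of $J$ miss the same singleton $z$, then $G$ is complete multipartite with part $J\cup\{z\}$ of size $(n+1-\ell)/2$. The other parts have sizes $2$ and $1$, which gives the listed graphs; for example, $(\ell,r,n)=(0,4,11)$ gives $K_{6,2,2,1}$.

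If two vertices of $J$ miss different singletons $z_1,z_2$, then those two vertices of $J$ together with all of $H$ would contain a $K_{r+1}$. Indeed, the $r$ parts of $H$ have representatives forming $K_r$, and we can choose representatives avoiding the relevant non-neighbor. We plan to show that such a graph contains $K_{r+1}$, contradicting $K_{r+1}$-freeness, whenever the number of parts allows it; when $s=1$ the issue does not arise. The task is to argue that distinct missed singletons always yield a $K_{r+1}$. The natural candidate is $x_1$ together with a transversal of $H$ that uses $z_2$ in place of $x_1$'s non-neighbor $z_1$, noting that $z_1$ and $z_2$ are in different parts.

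Actually, $x_1$ is adjacent to everything in $H$ except $z_1$. Take a transversal of $H$ containing $z_2$, and swap $z_1$'s part: since $z_1$'s part is a singleton, this is impossible. We therefore instead use $x_2$, which is adjacent to $z_1$, in place of $z_1$. But $x_1$ and $x_2$ are nonadjacent, so this also fails. This suggests that assignments to distinct singletons may be genuinely allowed, and the check must be done carefully. In that case the analysis would show that $K_{r+1}$-freeness forces every vertex of $J$ to miss a vertex in every transversal of $H$, which pins all of $J$ to a common missed vertex whenever $s\ge 2$.

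\textbf{Converse.} For the converse, verify each listed $(G,\ell,r,n)$ directly: it lies in $\Hl$, and the numerical inequalities and the edge equality hold. This is a routine computation.
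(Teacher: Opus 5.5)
\textbf{The gap is in Step 4, and its final claim is false.} Your Steps 1--3 match the paper: the same edge formula, \cref{lem:nrange} (with your correct check that $G\notin\Gell$), the parity restriction, and a finite table check. Your converse is also fine. The problem is the closing sentence of Step 4, which says that $K_{r+1}$-freeness pins all of $J$ to a common missed vertex whenever $s\ge 2$.

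As you noted in Step 1, $e(G)$ does not depend on which vertex of $H$ each $x\in J$ misses. Moreover, $K_{r+1}$-freeness only forces each missed vertex to be a singleton part of $H$. A $K_{r+1}$ in $G$ would have to be one vertex $x\in J$ plus a transversal of $H$, and every transversal contains every singleton. So every assignment of missed singletons yields a graph in $\Hl$ satisfying all hypotheses of the lemma.

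For example, take $(\ell,r,n)=(0,4,11)$, so $H=\T[6][4]$ has parts of sizes $2,2,1,1$ with singletons $z_1,z_2$. Let three vertices of $J$ miss $z_1$ and two miss $z_2$. This gives a $K_5$-free graph in $\Hl[4][11][0]$ with $13+25=38=e(\T[10][4])+1$ edges. It is not complete multipartite, since nonadjacency is not transitive, so it is not $K_{6,2,2,1}$. (Its degree sequence $5,5,5,5,5,7,8,9,9,9,9$ shows it is Hamiltonian by \cref{thm:kstable}, which is why it causes no trouble in \cref{theorem:kr+1all}.) So no argument from the stated hypotheses can identify $G$: they determine only the triple $(\ell,r,n)$.

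\textbf{How the paper identifies $G$.} It does not use $K_{r+1}$-freeness for this step; it uses \cref{Hfam}. That corollary requires $G$ to fail the relevant property: not traceable for $\ell=-1$, not Hamiltonian-connected for $\ell=1$, or not $\ell$-Hamiltonian, not $\ell$-path Hamiltonian, or not $\ell$-Hamiltonian-connected. Under that assumption, \cref{thm:kstable} excludes alternative (b) of \cref{Hfamell}. Hence $G$ is complete multipartite with a part $J\cup\{z\}$ of size $(n+1-\ell)/2$ and all other parts of size at most $2$. Combined with the surviving triples, this gives exactly the listed graphs.

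This extra hypothesis is tacit in the lemma's statement. It does hold wherever the lemma is applied (\cref{theorem:kr+1all}, \cref{theorem:kr+1kpath2}). To repair your proof, add that hypothesis to the forward direction and replace your Step 4 with this appeal to \cref{Hfam}.
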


\begin{proof}As $G \in \Hl$, for $j=(n-1-\ell)/2$, $G$ satisfies the hypotheses of \cref{thm:degcondsummary}, 
so has the maximum number of edges. By \cref{Hfamsize} and \cref{prop:turanedgecount}, we have 
        \begin{equation}\label{eq:Hedges}
            e(G) = \Big\lfloor{\frac{r-1}{2r}n^2-\Big(\frac{r-1}{r}n -\ell\Big)\frac{n-1-\ell}{2}+\frac{3r-1}{2r}\cdot\frac{(n-1-\ell)^2}{4}}\Big\rfloor.
        \end{equation}
    \begin{case} $r=4$
    
        By \cref{lem:nrange}, we have $5\ell+11 \le n \le 13-\ell$, which implies $\ell \in \set{-1,0}$. If $\ell=-1$ then, by \cref{lem:nrange}, we have $6 \le n \le 14$, so $n \in \{6,8,10,12,14\}$ (because $\Hl$ is empty if $n\equiv\ell \pmod{2}$). 
        If  $\ell=-1$ and $n \ge 10$, by  \cref{eq:Hedges}, we have \begin{align*}
        e(\T[n-1][4])-e(G)&= \Big\lfloor{\frac{3}{8}(n-1)^2}\Big\rfloor - \Big\lfloor{\frac{3}{8}n^2-\left(\frac{3n}{4}+1\right)\frac{n}{2}+\frac{11n^2}{32}}\Big\rfloor\\
        &> \frac{3n^2}{8}-\frac{3n}{4}+\frac{3}{8}-1 -\left(\frac{11n^2}{32}-\frac{n}{2}\right)
        =\frac{n^2}{32}-\frac{n}{4}-\frac{5}{8}\ge 0,
    \end{align*} contradicting $e(G) = e(\T[n-1][r])$. If $\ell=0$ then $n \in \set{11,13}$. For each of the pairs $(\ell,n)$ identified above, we use \cref{eq:Hedges} to find $e(G)$ and compare it to $e(\T[n-1][4])+\ell+1$. 
    When these numbers are equal, we find the unique extremal graph $G \in \Hl[4][n][\ell]$ using \cref{Hfam} and the extremality of $G$. (In fact for each pair $(\ell,n)$ we can find a unique potential extremal graph $G \in \Hl[4][n][\ell]$ using \cref{Hfam} and the extremality of $G$, but we include this graph in the table below only when it is extremal.) 

        \begin{center}
        \begin{tabular}{lllll}
        $(\ell,n)$ & $e(G)$ & $e(\T[n-1][4])+\ell+1$ & Extremal graph $G \in \Hl[4][n][\ell]$\\\hline
        $(-1,6)$ & 9 & 9 & $K_{4,1,1}$\\
        $(-1,8)$ & 18 & 18 & $K_{5,1,1,1}$ \\
        $(0,11)$ & 38 & 38 & $K_{6,2,2,1}$\\
        $(0,13)$ & 54 & 55 & -
        \end{tabular}
        \end{center}
    \end{case}

    \begin{case} $r=5$

        By \cref{lem:nrange}, if $\ell=-1$, then we have $n \le 5$, but we have assumed that $n \ge r+2 = 7$. Otherwise $\ell \ge 0$, and we have $3\ell+7 \le n \le \min\set{17-\ell, 10\ell+10}$, so $0 \le \ell \le 2$ where $n \not\equiv \ell \pmod 2$. In each case, we 
        check the number of edges in $G$ by \cref{eq:Hedges} and $e(\T[n-1][5])+\ell+1$, and when they are equal determine a unique graph using the extremality of $G$ and \cref{Hfam}. The results are summarized in the following table.

        \begin{center}
        \begin{tabular}{lllll}
        $(\ell,n)$ & $e(G)$ & $e(\T[n-1][5])+\ell+1$ & Extremal graph $G \in \Hl[5][n][\ell]$\\\hline
        $(0,7)$   & 15 & 15 & $K_{4,1,1,1}$\\
        $(0,9)$   & 26 & 26 & $K_{5,1,1,1,1}$\\
        $(1,10)$   & 34 & 34 & $K_{5,2,1,1,1}$\\
        $(1,12)$   & 49 & 50 & -\\
        $(1,14)$   & 67 & 69 & -\\
        $(1,16)$   & 88 & 92 & -\\
        $(2,13)$   & 60 & 60 & $K_{6,2,2,2,1}$\\
        $(2,15)$   & 80 & 81 & -
        \end{tabular}
        \end{center}
        
    \end{case}

    \begin{case} $r=6$
    
        First note $n\ge r+2 =8$. By \cref{lem:nrange}, if $\ell=-1$, we have $n \le 5$, a contradiction. Otherwise we have $\ell \ge 0$ and $\max\set{(7\ell+17)/3,(12\ell+63)/10} \le n \le \min\set{21-\ell,6\ell+6}$. From $(7\ell+17)/3 \le 21-\ell$ we have $\ell\le 4$. If $\ell=0$, we have $n \le 6\ell+6=6$, a contradiction. 
        Thus $1 \le \ell \le 4$.

        By substituting $r=6$ into \cref{eq:Hedges}, and using the expression for $e(\T[n-1][r])+\ell+1$ from \cref{thm:degcondsummary}, we have
        \begin{align}\label{eq:twobounds}
            e(G) &= \Big\lfloor{\frac{5}{12}n^2-\Big(\frac{5}{6}n -\ell\Big)\frac{n-1-\ell}{2}+\frac{17}{12}\cdot\frac{(n-1-\ell)^2}{4}}\Big\rfloor\nonumber\\
            &=  \floor{(17n^2 + (10\ell-14)n+(-7\ell^2+10\ell+17))/48}\nonumber\\
            &= \floor[\Big]{\frac{5}{12}n^2 - \frac{5}{6}n+\frac{12\ell+17}{12}} =  \floor{(20n^2-40n+48\ell+68)/48},
        \end{align}
        and for each of the values $\ell \in \set{1,2,3,4}$ one can find a minimum value of $n$ for which $(17n^2 + (10\ell-14)n+(-7\ell^2+10\ell+17))/48 + 1 \le (20n^2-40n+48\ell+68)/48$, contradicting \cref{eq:twobounds}. In this way we eliminate the cases where $\ell=1$ and $n \ge 11$, $\ell=2$ and $n\ge 13$, $\ell=3$ and $n \ge 15$, and $\ell=4$ and $n \ge 17$. 
        Also using the facts that $n \ge \max\set{(7\ell+17)/3,(12\ell+63)/10}$ and $n \not\equiv \ell \pmod{2}$, the remaining cases are $(\ell,n) \in \set{(1,8), (1,10), (2,11), (3,14), (4,15)}$. Similarly to the $r=4$ and $r=5$ cases, we determine $e(G)$ by \cref{eq:Hedges}, $e(\T[n-1][6])+\ell+1$, and where applicable the unique extremal graph in $\Hl[6][n][\ell]$ using the extremality of $G$ and \cref{Hfam}.

        \begin{center}
        \begin{tabular}{lllll}
        $(\ell,n)$ & $e(G)$ & $e(\T[n-1][6])+\ell+1$ & Extremal graph $G \in \Hl[6][n][\ell]$\\\hline
        $(1,8)$   & 22 & 22 & $K_{4,1,1,1,1}$\\
        $(1,10)$   & 35 & 35& $K_{5,1,1,1,1,1}$\\
        $(2,11)$   & 44 & 44& $K_{5,2,1,1,1,1}$\\
        $(3,14)$   & 73 & 74& -\\
        $(4,15)$   & 86 & 86& $K_{6,2,2,2,2,1}$
        \end{tabular}
        \end{center}
    \end{case}

    \begin{case} $r=7$

        First note $n \ge r+2 = 9$. By \cref{lem:nrange}, if $\ell=-1$, we have $n \le 5$, a contradiction. Otherwise we have $\ell \ge 0$ and $\max\{2\ell+5,\frac{7\ell+37}{6}\} \le n \le \min\{25-\ell,6\ell+6\}$. From $2\ell+6 \le n \le 25-\ell$ we have $\ell \le 6$. If $\ell=0$, then $n \le 6\ell+6=6$, a contradiction. 
        Thus $1 \le \ell \le 6$.

        By substituting $r=7$ into \cref{eq:Hedges}, and using the expression for $e(\T[n-1][r])+\ell+1$ from \cref{thm:degcondsummary}, we have
        \begin{align}\label{eq:twobounds7}
            e(G) &= \Big\lfloor{\frac{6}{14}n^2-\Big(\frac{6}{7}n -\ell\Big)\frac{n-1-\ell}{2}+\frac{20}{14}\cdot\frac{(n-1-\ell)^2}{4}}\Big\rfloor\nonumber\\
            &=  \floor{(5n^2 + (3\ell-4)n+(-2\ell^2+3\ell+5))/14}\nonumber\\
            &= \floor[\Big]{\frac{6}{14}n^2 - \frac{6}{7}n+\frac{14\ell+20}{14}} =  \floor{(6n^2-12n+14\ell+20)/14},
        \end{align}
        and for each of the values $\ell \in \set{1,2,3,4,5,6}$ one can find a minimum value of $n$ for which $(5n^2 + (3\ell-4)n+(-2\ell^2+3\ell+5))/14 + 1 \le (6n^2-12n+14\ell+20)/14$, contradicting \cref{eq:twobounds7}. Thus we eliminate the cases where $\ell=1$ and $n \ge 10$, $\ell=2$ and $n\ge 12$, $\ell=3$ and $n \ge 13$, $\ell=4$ and $n \ge 15$, $\ell=5$ and $n\ge 17$, and $\ell=6$ and $n \ge 19$. Also using $n\ge 2\ell+5$ and $n \not\equiv \ell \pmod{2}$, the remaining cases are the pairs $(\ell,n)$ shown in the leftmost column table in the below.

        \begin{center}
        \begin{tabular}{lllll}
        $(\ell,n)$ & $e(G)$ & $e(\T[n-1][7])+\ell+1$&Extremal graph $G \in \Hl[7][n][\ell]$\\\hline
        $(1,8)$   & 22 & 23 & - \\
        $(2,9)$   & 30 & 30 & $K_{4,1,1,1,1,1}$\\
        $(2,11)$ & 45 & 45 & $K_{5,1,1,1,1,1,1}$ \\
        $(3,12)$ & 55 & 55 & $K_{5,2,1,1,1,1,1}$ \\
        $(4,13)$  & 66 & 66 & $K_{5,2,2,1,1,1,1}$\\
        $(5,16)$ & 101 & 102 & - \\
        $(6,17)$ & 116 & 116 & $K_{6,2,2,2,2,2,1}$ \qedhere
        \end{tabular}
        \end{center}
    \end{case}
\end{proof}

    \subsection{An omnibus theorem}\label{subsec:omnibus}

The following theorem gives the maximum number of edges in $n$-vertex, $K_{r+1}$-free graphs that avoid one of a list of forbidden properties and characterizes the extremal graphs. The known result for Hamiltonicity (\cref{theorem:kr+1ham}) is included as part \ref{part:Ham} in the statement of the theorem for completeness.

\begin{thm}\label{theorem:kr+1all}
Let $G$ be an $n$-vertex, $K_{r+1}$-free graph where $r\ge 3$.

\begin{enumerate}[(a)]
\item\label{part:trace} If $G$ is not traceable and
\[n \ge \begin{cases}20 & \text{if }r=3\\ 
1 & \text{if } r \ge 4
\end{cases},\quad\text{ then } e(G) \leq e(\T[n-1][r]).\] Equality holds if and only if $G$ is $\T[n-1][r]$ plus an isolated vertex, i.e., $G \in \Gell[r][n][-1]$, or $G$ is $K_{3,1}$, $K_{4,1,1}$, or $K_{5,1,1,1}$, 
with the exceptional graphs occurring in the cases where $r \ge 4$ and $n=4$, or $(r,n)$ is $(4,6)$ or $(4,8)$, respectively.

\item \label{part:Ham}If $G$ is not Hamiltonian and \[n \ge \begin{cases}26 & \text{if }r=3\\ 11 & \text{if } r=4\\
2 & \text{if } r \ge 5\end{cases},\quad\text{ then }e(G) \leq e(\T[n-1][r]) + 1.\] Equality holds if and only if $G \in \Gell[r][n][0]$ or $G$ is $K_{3,1,1}$, $K_{6,2,2,1}$, $K_{4,1,1,1}$, or $K_{5,1,1,1,1}$, with the exceptional graphs occurring in the cases where $r \ge 5$ and $n =5$, or $(r,n)$ is $(4,11)$, $(5,7)$, or $(5,9)$, respectively.

\item If $G$ is not Hamiltonian-connected and \[n \ge \begin{cases} 32 & \text{if } r=3\\ 16 & \text{if } r=4\\ 10 & \text{if } r=5\\
4 & \text{if } r \ge 6\end{cases},\quad\text{ then }e(G) \leq e(\T[n-1][r]) + 2.\]  Equality holds if and only if $G \in \Gell[r][n][1]$ or $G$ is $K_{3,1,1,1}$, $K_{4,1,1,1,1}$, $K_{5,2,1,1,1}$, or $K_{5,1,1,1,1,1}$ with the exceptional graphs occurring in the cases where $r \ge 6$ and $n=6$, or $(r,n)$ is $(6,8)$, $(5,10)$ or $(6,10)$, respectively.

\item\label{part:kHam} For every $k \ge 0$, if $G$ is not $k$-Hamiltonian and \[n \ge \begin{cases} 6k+26 & \text{if } r=3\\ 5k+11 & \text{if } 4 \le r \le 7\\ k+5+4(k+4)/(r-4) & \text{if } r \ge 8\end{cases},\quad\text{ then }
 e(G) \leq e(\T[n-1][r]) + (k+1). \] 
Equality holds if and only if $G \in \Gell[r][n][k]$ or $G \in \Hlhat[r][n][k]$.

\item For every $k \ge 1$, if $G$ is not $k$-Hamiltonian-connected and \[n \ge \begin{cases} 6k+26 & \text{if } r=3\\ 5k+11 & \text{if } r=4\\ 3k+7 & \text{if } 5 \le r \le 7\\ k+5+4(k+4)/(r-4) & \text{if } r \ge 8\end{cases},\quad\text{ then }
 e(G) \leq e(\T[n-1][r]) + (k+1). \] 
Equality holds if and only if $G \in \Gell[r][n][k]$ or $G \in \Hlhat[r][n][k]$.

\item\label{part:kconn} For every $k \ge 1$, if $G$ is not $k$-connected and \[n \ge \begin{cases} 6k+14 & \text{if } r=3\\ 5k+1 & \text{if } 4 \le r \le 7\\ 2k+5 & \text{if } r \ge 8\end{cases},\quad\text{ then }
 e(G) \leq e(\T[n-1][r]) + (k-1). \] 
Equality holds if and only if $G \in \Gell[r][n][k-2]$.
\end{enumerate}
In all cases 
$\Gell$ is nonempty, and the bounds are tight.
\end{thm}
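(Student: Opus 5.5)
The proof will derive every part from \cref{thm:stability}. For each part we read off $\ell$ from \cref{table}: $\ell=-1,0,1,k,k,k-2$ for parts (a)--(f) respectively. We then check three things. First, the stated lower bound on $n$ must meet the hypotheses of \cref{thm:degcondsummary} and $n\ge n(P)$. Second, the bound $e(G)\le e(\T[n-1][r])+\ell+1$ then follows at once. Third, tightness follows from \cref{prop:gell}, which says that every graph in $\Gell$ avoids the relevant property. \cref{prop:gell} also gives the "if" direction for $\Gell$, and nonemptiness of $\Gell$ for these $n$ comes from \cref{def:graphs}. Part (b) is simply \cref{theorem:kr+1ham} restated, since $\G$ and the pendant-edge graphs are exactly $\Gell[r][n][0]$.

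For the characterization of equality we do not apply \cref{thm:stability} directly, because its hypothesis that $n$ exceeds the bounds of \cref{lem:nrange} would discard the small exceptional cases. Instead we argue from \cref{thm:degcondsummary}. When $r=3$ or $r\ge 8$, equality forces $G\in\Gell$. When $4\le r\le 7$, equality forces $G\in\Gell$ or $G\in\Hl$. If $G\in\Hl$ and $n\ge r+2$, \cref{lem:exceptions} lists the possibilities. For each listed graph we decide, using \cref{Hfam} together with Propositions \ref{prop:partitekHamconn}, \ref{prop:partitekHam}, \ref{prop:partitetraceable}, whether it avoids the property.

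\begin{itemize}
\item In part (f), \cref{Hfam}(f) shows every graph in $\Hl$ is $(\ell+2)=k$-connected, so no exceptions occur.
\item In parts (d) and (e), each listed complete multipartite graph has largest part $(n+1-\ell)/2>(n-\ell)/2$, so by the propositions it fails the property. Hence the exceptional family is exactly $\Hlhat$.
\item In parts (a) and (c), the graphs on the $\ell=-1$ and $\ell=1$ lists of \cref{lem:exceptions} are the ones named. The remaining small cases are those with $n<r+2$, where the Tur\'an graph is nearly complete: $K_{3,1}$ at $n=4$ and $K_{3,1,1,1}$ at $n=6$. These we handle by comparison with \cref{thm:ore}, since for $n\le r+1$ a $K_{r+1}$-free graph avoiding the property has at most $e(K_{n-1})+\ell+1$ edges, with the listed extremal graphs.
\end{itemize}

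The step I expect to be the main obstacle is the bookkeeping of the $n$-thresholds. We must verify for each $r$ regime that the stated bound (e.g.\ $5k+11$ for $4\le r\le 7$, $3k+7$ for $5\le r\le7$ in part (e), $6k+14$ for $r=3$ in part (f)) implies the piecewise lower bound of \cref{thm:degcondsummary} with the appropriate $\ell$. For part (e) with $5\le r\le 7$, for example, one checks $3k+7\ge \max\{3+k+4(k+2)/(r-3),5+k+(r+2k+7)/(2r-2)\}$ at $r=5$. Where the stated bound is smaller than needed for tiny $n$ (for instance part (a) with $n\ge1$ when $r\ge4$), we must separately handle small $n$. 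For $n\le r$ the $K_{r+1}$-free condition is vacuous and \cref{thm:ore}(a) applies directly. For intermediate $n$ we would rely on the explicit computations in the proof of \cref{lem:exceptions} (e.g.\ $(4,6)$, $(4,8)$). I would first tabulate the thresholds per part and per $r$ and reduce any residual finite range to the Ore-type statements.
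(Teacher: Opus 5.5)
Your proposal is correct and follows essentially the same route as the paper. You choose $\ell$ from \cref{table}, use \cref{thm:kstable} to meet the hypothesis of \cref{thm:degcondsummary}, reduce equality to $\Gell$ or (for $4\le r\le 7$) $\Hl$, and then settle the $\Hl$ cases with \cref{lem:exceptions}, the complete multipartite propositions and \cref{Hfam}(f), treating $n\le r+1$ in parts (a) and (c) via \cref{thm:ore}. One small correction: the upper bound itself should come from \cref{thm:degcondsummary}, not \cref{thm:stability}, since the \cref{lem:nrange} hypothesis of the latter fails for many of the stated $n$. Also, no residual range with $n\ge r+2$ survives the threshold check, which is fortunate because \cref{thm:ore} could not handle one there: $e(K_{n-1})>e(\T[n-1][r])$ in that range.
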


\begin{proof}
    The proof of each part (a) and (c)--(f) involves selecting an appropriate value of $\ell$ depending on the part, as shown in the following table (and corresponding to the forbidden property being $(n+\ell)$-stable), and then substituting that value for $\ell$ throughout the rest of the proof below. 
    \begin{center}    
    \begin{tabular}{|c|c|c|c|c|c|}\hline
        Part & (a) & (c) & (d) & (e) & (f) \\\hline
        $\ell$ & $-1$ & $1$ & $k$ & $k$ & $k-2$\\\hline
    \end{tabular}
    \end{center}

    Using the appropriate value of $\ell$ in each part, equality is achieved for all properties if $G \in \Gell$ by \cref{prop:gell}.

    In each part, for the appropriate value of $\ell$, there is an integer $j$ in $1 \le j \le (n-1-\ell)/2$ such that $d_j \le j+\ell$, using \cref{thm:kstable} and \cref{table}.

    Notice that for all properties, when $r=3$, we have assumed that $n \ge 6\ell+26$. Therefore \cref{thm:degcondsummary} implies that $e(G) \le e(\T[n-1][r]) + (\ell+1)$, that $\Gell$ is nonempty, and that equality holds if and only if $G \in \Gell[r][n][\ell]$ for $r=3$.
    
    For parts (a) and (c) of the theorem (corresponding to $\ell \in \set{-1, 1}$), we address the $n-1 \le r$ case separately. If $n-1 \le r$ then the graph consisting of $K_{n-1}$ and a vertex of degree $\ell+1$ both is $K_{r+1}$-free and has the maximum number of edges among all $n$-vertex graphs avoiding the forbidden property by \cref{thm:ore}. Therefore this graph also is extremal among such graphs that are $K_{r+1}$-free, and $e(G) \le e(K_{n-1})+(\ell+1) = e(\T[n-1][r])+(\ell+1)$, with equality if and only if $G$ is a $K_{n-1} = \T[n-1][r]$ plus a vertex of degree $\ell+1$, with one exceptional graph for each property: $K_{3,1}$ and $K_{3,1,1,1}$, respectively. Now we assume $n \ge r+2$ for parts (a) and (c).
    
    When $r\ge 8$, notice that for all properties we have assumed $n \ge \ell+5 + 4(\ell+4)/(r-4)$ (using the previous paragraph for properties  (a) and (c) to assume $n \ge r+2 \ge 10$, and for $k$-connectedness using $\ell+5+4(\ell+4)/(r-4) \le 2\ell+9 = 2(k-2)+9$) with one exception, $(\ell,r,n)=(1,8,10)$. In all cases \cref{thm:degcondsummary} 
    implies that $e(G) \le e(\T[n-1][r])+(\ell+1)$, that $\Gell$ is nonempty, and that equality holds if and only if $G \in \Gell$ for all $r \ge 8$.

    For $4 \le r \le 7$ we note:
    \begin{enumerate}[(a)]
        \item For traceability, $n \ge r+2 \ge 6 \ge \max \{2+\frac{4}{r-3}, 4+\frac{r+5}{2r-2}\} = \max\{3+ \ell + \frac{4(\ell+2)}{r-3},5+\ell+\frac{r+2\ell+7}{2r-2}\}$.
        \item[(c)] For Hamiltonian-connectedness, since $n \ge r+2$, 
            \[
            n \ge \begin{cases} 16 & \text{if } r=4\\
            10 & \text{if } r=5\\
            8 & \text{if } 6\le r \le 7\\\end{cases} = 
            \ceil[\Big]{\max\{3+ 1 + \frac{4(1+2)}{r-3},5+1+\frac{r+2+7}{2r-2}\}}.
            \]
    \end{enumerate}
    For parts (d)--(e) of the theorem we use $n \ge 5k+11 \ge \max\{3+ k + \frac{4(k+2)}{r-3},5+k+\frac{r+2k+7}{2r-2}\}$ and $5k+11 \ge 11 \ge r+2$. For part (d) since $k\ge 1$ we additionally use that for $5 \le r \le 7$, $n \ge 3k+7 \ge 10 \ge \max\{r+2,3+ k + \frac{4(k+2)}{r-3},5+k+\frac{r+2k+7}{2r-2}\}$. 
    
    Therefore for $4 \le r \le 7$ and parts (a) and (c)--(f), \cref{thm:degcondsummary} with the appropriate value of $\ell$ implies that $e(G) \le e(\T[n-1][r])+(\ell+1)$, that $\Gell$ is nonempty, that if $G \in \Gell$ then equality holds, and that if we have equality then $G \in \Gell[r][n][\ell]$ or $G \in \Hl[r][n][\ell]$. By \cref{lem:exceptions}, in the latter case we have $G \in \Hlhat[r][n][\ell]$. By Propositions \ref{prop:partitetraceable} (part (a)), \ref{prop:partitekHam} (part (d)), and \ref{prop:partitekHamconn} (parts (c) and (e)), the graphs in $\Hlhat$ avoid the forbidden properties as required. However, for part (f), recall from \cref{Hfam} that the graphs in $\Hl[r][n][k-2]$ are $k$-connected, so there are no extremal graphs in this family.
\end{proof}

\subsection{$k$-Path Hamiltonicity}\label{subsec:kPH}

In this subsection we prove two theorems, the first proving the best possible bound on the number of edges in a $K_{r+1}$-free graph that is not $k$-path Hamiltonian for $n$ sufficiently large. In the second theorem, with more restrictive lower bounds on $n$, we give a complete description of all extremal graphs.

\begin{thm}\label{theorem:kr+1kpath1}
Let $G$ be an $n$-vertex, $K_{r+1}$-free graph where $r \ge 3$. For every $k \ge 0$, if $G$ is not $k$-path Hamiltonian and \[n \ge \begin{cases} 6k+26 & \text{if } r=3\\ 5k+11 & \text{if } 4 \le r \le 7\\  k+5+ 4(k+4)/(r-4) & \text{if } r \ge 8\end{cases},\quad\text{then }e(G) \leq e(\T[n-1][r]) + (k+1).\] 
The set $\mathcal{J}^k_{n,r}$ is nonempty, and equality holds if $G \in \mathcal{J}^k_{n,r}$.
\end{thm}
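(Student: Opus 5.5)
The plan is to follow the proof of \cref{theorem:kr+1all}, taking $\ell=k$. By \cref{table}, $k$-path Hamiltonicity is $(n+k)$-stable and $n(P)=k+3$. Since the hypothesis on $n$ gives $n\ge k+3$, \cref{thm:kstable} applies. It yields an integer $1\le j\le (n-1-k)/2$ with $d_j\le j+k$, which is exactly the hypothesis of \cref{thm:degcondsummary} with $\ell=k$. The theorem then gives $e(G)\le e(\T[n-1][r])+(k+1)$.

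Before applying it, we must check that the stated lower bounds on $n$ imply those of \cref{thm:degcondsummary}.
\begin{itemize}
\item For $r=3$ the two bounds coincide.
\item For $r\ge 8$ they also coincide.
\item For $4\le r\le 7$ we need $5k+11\ge \max\{3+k+\frac{4(k+2)}{r-3},\,5+k+\frac{r+2k+7}{2r-2}\}$. The first term is largest at $r=4$, where it equals $5k+11$. The second term is at most $5+k+\frac{2k+11}{6}\le 5k+11$.
\end{itemize}
This is the same verification already carried out for part (d) of \cref{theorem:kr+1all}. For the extremal-graph statement in this theorem we only need the upper bound, so \cref{lem:nrange} and \cref{lem:exceptions} are not required.

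For tightness and nonemptiness, \cref{def:graphs} shows that $\Gs[r][n][k]\in\mathcal{J}^k_{n,r}$ whenever $k+1\le\ceil{(r-1)(n-1)/r}$. This holds for our $n$: we have $n\ge 5k+11$ or $n\ge 6k+26$ when $r\le 7$, and $n\ge 2k+5$ when $r\ge 8$. In each case $(r-1)(n-1)/r\ge (n-1)/2\ge k+1$. So $\mathcal{J}^k_{n,r}$ is nonempty.

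Finally, let $G\in\mathcal{J}^k_{n,r}\subseteq\Gell[r][n][k]$. We need $-1\le k\le n-3$ and $2\le r\le n-1$ in order to invoke \cref{prop:gell}. The first holds because $n\ge k+5$ in all cases. The second, $n\ge r+1$, is clear when $r\le 7$, since then $n\ge 11$. When $r\ge 8$ it must come from the nonemptiness of $\Gell$ given by \cref{thm:degcondsummary}, because $\T[n-1][r]$ must have the appropriate structure. With these hypotheses in place, \cref{prop:gell}\ref{part:lpath} shows that $G$ is not $k$-path Hamiltonian and $e(G)=e(\T[n-1])+k+1$. Hence equality holds for every $G\in\mathcal{J}^k_{n,r}$, and the bound is tight.

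The main obstacle is this bookkeeping: checking the inequalities on $n$ and $\ell$ across the cases $r=3$, $4\le r\le 7$ and $r\ge 8$. The rest is a direct chaining of \cref{thm:kstable}, \cref{thm:degcondsummary} and \cref{prop:gell}.
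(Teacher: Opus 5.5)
Your argument follows the paper's chain: \cref{thm:kstable} with $\ell=k$, then \cref{thm:degcondsummary}, then $\mathcal{J}^k_{n,r}\subseteq\Gell[r][n][k]$ with \cref{prop:gell}\ref{part:lpath}. Your checks for $r\le 7$ are fine. Two of your $r\ge 8$ verifications are false as written, although both conclusions can be rescued.

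First, the hypothesis does not give $n\ge 2k+5$ when $r\ge 8$. With $r=100$ and $k=10$ it only asks for $n\ge 15+\frac{56}{96}$, so $n=16$ is allowed. Then $(n-1)/2<k+1$, and your chain $\frac{(r-1)(n-1)}{r}\ge\frac{n-1}{2}\ge k+1$ breaks. You must keep the factor $\frac{r-1}{r}$. The hypothesis says exactly $n-1\ge (k+4)\bigl(1+\frac{4}{r-4}\bigr)=\frac{(k+4)r}{r-4}$. Hence $\frac{(r-1)(n-1)}{r}\ge\frac{(k+4)(r-1)}{r-4}>k+1$, and $\Gs[r][n][k]\in\mathcal{J}^k_{n,r}$ follows from \cref{def:graphs}.

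Second, $n\ge r+1$ is not forced when $r\ge 8$; for example $k=0$, $r=1000$, $n=6$ satisfies the hypothesis. Nonemptiness of $\Gell[r][n][k]$ cannot supply it either, since for $r\ge n-1$ we simply have $\T[n-1][r]=K_{n-1}$ and $\Gell[r][n][k]$ is still nonempty. The step survives because the conclusions of \cref{prop:gell}\ref{part:lpath} never use $r\le n-1$. For $G\in\mathcal{J}^k_{n,r}$, the count $e(G)=e(\T[n-1][r])+k+1$ is immediate from the definition. The non-$k$-path-Hamiltonicity argument uses only three facts: the exceptional vertex $v$ has exactly $k+1$ neighbors, a path $P$ of length $k$ spans $N(v)$, and the forced cycle through $v$ and $P$ has $k+2<n$ vertices. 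You should state this directly instead of appealing to the structure of $\T[n-1][r]$.
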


\begin{proof}
If $G \in \mathcal{J}^k_{n,r}$ then $G$ is not $k$-path Hamiltonian and $e(G) = e(\T[n-1][r]) + (k+1)$ by \cref{prop:gell} as $\mathcal{J}^k_{n,r} \subseteq \Gell[r][n][k]$. Otherwise the proof follows similarly to that of \cref{theorem:kr+1all}, using $\ell=k$, \cref{thm:kstable}, and \cref{table} to establish the existence of $j$ such that $d_j \le j+\ell$.
\end{proof}

With greater lower bounds on $n$ we use \cref{kpathtrace} to characterize the extremal graphs.

\begin{thm}\label{theorem:kr+1kpath2}
Let $G$ be an $n$-vertex, $K_{r+1}$-free graph where $r \ge 3$. For every $k \ge 0$, if $G$ is not $k$-path Hamiltonian and \[n \ge \begin{cases} 6k+26 & \text{if } r=3\\ 6k+2r+3 & \text{if } 4 \le r \le 7\\ k+4(k+4)/(r-4)+2r & \text{if } r \ge 8\end{cases},\quad\text{ then } e(G) \leq e(\T[n-1][r]) + (k+1). \] 
The set $\mathcal{J}^k_{n,r}$ is nonempty, and equality holds if and only if $G \in \mathcal{J}^k_{n,r}$ or $G \in \Hlhat[r][n][k]$.
\end{thm}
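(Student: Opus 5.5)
The plan is to run the proof of \cref{theorem:kr+1kpath1} with $\ell=k$ and then add a characterization step using \cref{kpathtrace} and \cref{Hfam}.

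First, the bound. By \cref{thm:kstable} and \cref{table}, $k$-path Hamiltonicity is $(n+k)$-stable with $n(P)=k+3$. So a non-$k$-path-Hamiltonian $G$ has some $1\le j\le (n-1-k)/2$ with $d_j\le j+k$. I then check that the new lower bounds on $n$ dominate those of \cref{thm:degcondsummary} with $\ell=k$:
\begin{itemize}
\item For $r=3$ this is immediate.
\item For $4\le r\le 7$ I use $6k+2r+3\ge 5k+11$.
\item For $r\ge 8$ I use $k+4(k+4)/(r-4)+2r\ge k+5+4(k+4)/(r-4)$.
\end{itemize}
\cref{thm:degcondsummary} then gives $e(G)\le e(\T[n-1][r])+k+1$. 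It also shows that equality forces $G\in\Gell[r][n][k]$, or, when $4\le r\le 7$, possibly $G\in\Hl[r][n][k]$. Nonemptiness of $\mathcal{J}^k_{n,r}$ follows since $\Gs[r][n][k]\in\mathcal{J}^k_{n,r}$ once $k+1\le\ceil{(r-1)(n-1)/r}$, which holds for these $n$.

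Second, the sufficiency direction. If $G\in\mathcal{J}^k_{n,r}$, \cref{prop:gell}\ref{part:lpath} gives that $G$ is not $k$-path Hamiltonian and attains equality. If $G\in\Hlhat[r][n][k]$, then by \cref{lem:exceptions} the graph $G$ lies in $\Hl$, is complete multipartite, and attains the bound. \cref{prop:kpathham} then shows $G$ is not $k$-path Hamiltonian.

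Third, the necessity direction. Suppose equality holds.
\begin{itemize}
\item If $G\in\Gell[r][n][k]$, I apply \cref{kpathtrace} to get $G\in\mathcal{J}^k_{n,r}$. This requires $n\ge\max\{8+k+(2k+12)/(r-2),\,k+2r+k/(r-1)\}$, so I must verify that each case of the hypothesis implies both quantities. For $r=3$ the two quantities are $3k+20$ and $3k/2+6$, both at most $6k+26$. For $4\le r\le7$, $6k+2r+3$ exceeds $8+k+(k+6)$ and $k+2r+k/3$. For $r\ge 8$, $k+4(k+4)/(r-4)+2r\ge k+2r+k/(r-1)$ is clear, while $8+k+(2k+12)/(r-2)\le k+2r$ since $r\ge8$ gives $2r-8\ge 8>(2k+12)/(r-2)$ only when $k$ is small. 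So I will instead use $4(k+4)/(r-4)\ge (2k+12)/(r-2)$ together with $2r\ge 8$.
\item If $G\in\Hl[r][n][k]$ with $4\le r\le 7$, I apply \cref{lem:exceptions}. This needs $n\ge r+2$, which is clear from $6k+2r+3$, together with the degree-bound hypothesis, which also follows. It gives $G\in\Hlhat[r][n][k]$.
\end{itemize}

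The main obstacle is the routine but fiddly arithmetic in the third step: checking that the stated lower bounds on $n$ simultaneously cover \cref{thm:degcondsummary}, \cref{lem:nrange}'s exclusion ranges, and the hypotheses of \cref{kpathtrace} and \cref{kpathcycle}, especially in the $r\ge 8$ regime and for $k=0$. The $k=0$ case of \cref{kpathtrace} is trivial, because there $\mathcal{J}^0_{n,r}=\Gell[r][n][0]$.
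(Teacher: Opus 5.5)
Your proposal is correct and follows essentially the same route as the paper's proof. Both use \cref{thm:degcondsummary} with $\ell=k$ for the bound and for the dichotomy $G\in\Gell[r][n][k]$ or $G\in\Hl[r][n][k]$, then \cref{prop:gell}\ref{part:lpath} and \cref{lem:exceptions} with \cref{prop:kpathham} for sufficiency, and \cref{kpathtrace} and \cref{lem:exceptions} for necessity.

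The differences are minor.
\begin{itemize}
\item The paper handles $(k,r)=(0,4)$, the one case where the hypothesis on $n$ falls below that of \cref{kpathtrace}, by citing \cref{theorem:kr+1all}\ref{part:Ham}. You instead use the direct observation $\mathcal{J}^0_{n,r}=\Gell[r][n][0]$.
\item That same observation is what rescues your estimate $6k+2r+3\ge 2k+14$, which is false for $k=0$ and $r\in\{4,5\}$.
\item You should delete the false start in your $r\ge 8$ arithmetic.
\end{itemize}
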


\begin{proof}
The upper bound $e(G) \leq e(\T[n-1][r]) + (k+1)$ holds, with equality if $G \in \mathcal{J}^k_{n,r}$, by \cref{theorem:kr+1kpath1}, and equality holds if $G \in \Hlhat[r][n][k]$ by the latter statement of \cref{lem:exceptions} (though some graphs in $\Hlhat$ have fewer vertices than required in this theorem).

For $r \ge 8$ and $n \ge  k+4(k+4)/(r-4)+2r >  k+5+ 4(k+4)/(r-4)$, \cref{thm:degcondsummary} 
with $\ell=k$ implies that equality holds if and only if $G \in \Gell[r][n][k]$. For $r=3$ and $n \ge 6k+26$, \cref{thm:degcondsummary} with $\ell=k$ implies that equality holds if and only if $G \in \Gell[r][n][k]$.

For $4 \le r \le 7$ and $n \ge 6k+2r+3 \ge \max\{3+ k + \frac{4(k+2)}{r-3},5+k+\frac{r+2k+7}{2r-2},r+2\}$, \cref{thm:degcondsummary} 
with $\ell=k$ implies that if we have equality then $G \in \Gell[r][n][k]$ or $G \in \Hl[r][n][k]$. In the latter case, \cref{lem:exceptions} implies $G \in \Hlhat$. By \cref{prop:kpathham}, these graphs are not $k$-path Hamiltonian.

If $G \in \mathcal{J}^k_{n,r}$, then $G$ is not $k$-path Hamiltonian by \cref{prop:gell}\ref{part:lpath}. It remains to show that if $G \in \Gell[r][n][k]$ is not $k$-path Hamiltonian then $G \in \mathcal{J}^k_{n,r}$. The case $(k,r)=(0,4)$ is included in \cref{theorem:kr+1all}\ref{part:Ham} as 0-path Hamiltonicity is Hamiltonicity and $\Gell[r][n][0] = \mathcal{J}^0_{n,r}$. In all other cases, this implication holds by \cref{kpathtrace}; it can be checked that the lower bound on $n$ in \cref{theorem:kr+1kpath2} exceeds the lower bounds in \cref{kpathtrace}.
\end{proof}

\begin{rem} All of the graphs in $\Hlhat$ for $0 \le \ell \le n-3$ are known to be extremal (and not $\ell$-path Hamiltonian) by \cref{prop:kpathham}, \cref{thm:degcondsummary}, 
and \cref{lem:exceptions}, although some of them have smaller values of $n$ than are included in \cref{theorem:kr+1kpath2}. 
\end{rem}

\section{Edge Density Conditions for $r=2$}\label{sec:r2}

We have seen that, for $r \ge 3$ and sufficiently large $n$, each extremal graph consists of the Tur\'an graph on $n-1$ vertices (a balanced complete $r$-partite graph) plus one low-degree vertex. In contrast, in the case that $r=2$, the extremal graphs are complete bipartite graphs that are balanced or almost balanced, except for the property of $k$-connectedness. (In bipartite graphs, unlike $r$-partite graphs for $r \ge 3$, paths and cycles must alternate between partite sets.) Our proofs use Mantel's theorem.

\begin{thm}[Mantel \cite{Mantel}]
    For every $n \ge 1$, if $G$ is an $n$-vertex, $K_3$-free graph, then $e(G) \le e(\T[n][3])$, and equality holds if and only if $G \cong \T[n][3]$. 
\end{thm}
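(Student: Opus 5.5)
Before planning: as typeset, the statement reads $\T[n][3]$, which by the paper's macro is $\Ts_3(n)$. That cannot be intended. $\Ts_3(n)$ contains triangles for $n\ge 3$, and it has more edges than the true extremal graph. I will therefore prove the intended statement. If $G$ is $K_3$-free on $n$ vertices, then $e(G)\le e(\Ts_2(n))=\floor{n^2/4}$, with equality if and only if $G\cong \Ts_2(n)$, the complete bipartite graph with parts of sizes $\floor{n/2}$ and $\ceil{n/2}$.

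The plan is the maximum-degree argument, because it gives the bound and the equality case at the same time. If $G$ has no edges the bound is trivial, so I assume $e(G)\ge 1$. Let $v$ be a vertex of maximum degree $\Delta$, and set $A=N(v)$ and $B=V(G)\setminus A$, so $|A|=\Delta$ and $|B|=n-\Delta$. Since $G$ is $K_3$-free, $A$ is an independent set. Hence every edge of $G$ has at least one endpoint in $B$. Counting each edge at an endpoint in $B$ gives
\[ e(G)\le \sum_{u\in B} d(u)\le (n-\Delta)\Delta\le \floor{n^2/4}, \]
where the last step uses that $x(n-x)$ over integers $x$ is maximized at $x\in\{\floor{n/2},\ceil{n/2}\}$.

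For the equality case, suppose $e(G)=\floor{n^2/4}$; then each inequality above is tight.
\begin{itemize}
\item Tightness of the first inequality means no edge is counted twice, so no edge has both endpoints in $B$. Thus $B$ is independent, and $G$ is bipartite with parts $A$ and $B$.
\item Tightness of the second inequality means every $u\in B$ has degree exactly $\Delta=|A|$. Since $B$ is independent, each such $u$ is adjacent to all of $A$, so $G$ is the complete bipartite graph $K_{|A|,|B|}$.
\item Tightness of the third inequality forces $\Delta\in\{\floor{n/2},\ceil{n/2}\}$. So the parts are balanced, and $G\cong\Ts_2(n)$.
\end{itemize}
Conversely, $\Ts_2(n)$ is bipartite, hence triangle-free, and it has exactly $\floor{n/2}\ceil{n/2}=\floor{n^2/4}$ edges, so equality is attained.

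There is no real obstacle in this argument. The only point needing care is the equality analysis: one must check that the three tightness conditions together pin down the complete balanced bipartite graph, and that the case $n=1$ or $e(G)=0$ is handled trivially.
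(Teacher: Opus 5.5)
Your proof is correct. You are also right that $\Ts_3(n)$ in the statement is a typo for $\Ts_2(n)$. The paper itself uses Mantel's theorem in \cref{sec:r2} in exactly your corrected form: $e(G)\le\lfloor n^2/4\rfloor$, with equality if and only if $G\cong K_{\lfloor n/2\rfloor,\lceil n/2\rceil}$.

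The paper gives no proof of its own; the result is quoted from Mantel, so there is no argument to compare against. Your maximum-degree argument is the standard one, and your equality analysis is complete. Once $A$ is independent, $\sum_{u\in B}d(u)=e(G)+e(B)$. Also, $x(n-x)=n^2/4-(x-n/2)^2$ has only $\lfloor n/2\rfloor$ and $\lceil n/2\rceil$ as integer maximizers. The $e(G)=0$ case needs no separate treatment either, since the same chain with $\Delta=0$ works and equality then forces $n=1$.
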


\begin{thm}
    For $n \ge 6$, let $G$ be an $n$-vertex, $K_3$-free graph that is not traceable. Then \[e(G) \le e(K_{\floor{n/2}-1,\ceil{n/2}+1}) = \begin{cases}n^2/4-1&\text{if $n$ is even}\\
    (n^2-9)/4&\text{if $n$ is odd}\end{cases},\]
    and this bound is tight as $K_{\floor{n/2}-1,\ceil{n/2}+1}$ is an $n$-vertex, $K_3$-free graph that is not traceable. For $n \ge 8$, $K_{\floor{n/2}-1,\ceil{n/2}+1}$ is the unique extremal graph.
\end{thm}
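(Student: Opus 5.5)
The plan is to follow the same route as \cref{thm:kstableedge} and \cref{thm:stability}: turn non-traceability into a Chv\'atal-type degree condition, then combine it with triangle-freeness. Traceability is $(n-1)$-stable with $n(P)=1$ (\cref{table}, $\ell=-1$). So by \cref{thm:kstable} there is an integer $1\le i\le n/2$ with $d_i\le i-1$ and $d_{n-i+1}\le n-i-1$. Tightness is immediate. In $K_{\floor{n/2}-1,\ceil{n/2}+1}$ the larger side has more than $(n+1)/2$ vertices, so by \cref{prop:partitetraceable} it is not traceable. Its edge count is the product of the part sizes, which gives the two displayed formulas.

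I will split on $i$. For $i=1$ we have $d_1=0$, so $G$ is a triangle-free graph on $n-1$ vertices plus an isolated vertex. Mantel gives $e(G)\le\floor{(n-1)^2/4}$. This is at most $n^2/4-1$ for even $n\ge 2$, and at most $(n^2-9)/4$ for odd $n\ge5$, with strict inequality once $n\ge 6$. So the case $i=1$ never produces an extremal graph in our range.

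For $i\ge2$ a cruder count is not enough: summing $i(i-1)$ for the low-degree vertices and applying Mantel to the rest gives roughly $5n^2/16$ near $i=n/2$. So I will use triangle-freeness more strongly through the identity
\[\sum_v d(v)^2=\sum_{uv\in E}\bigl(d(u)+d(v)\bigr)\le n\,e(G),\]
which holds because adjacent vertices in a triangle-free graph have disjoint neighborhoods. The degree sequence obeys three constraints:
\begin{itemize}
\item the $i$ smallest degrees are at most $i-1$;
\item the next $n-2i+1$ degrees are at most $n-i-1$;
\item the top $i-1$ degrees are at most the maximum degree.
\end{itemize}
The maximum degree is at most $n-1-(\text{degree of any neighbour})$, so in the relevant range it is at most $n-i$. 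The aim is to show $2e(G)=\sum d(v)\le 2\floor{n/2}-2$ times $\ceil{n/2}+1$, over the whole range.

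A cleaner alternative, which I would try first, is structural. Let $S$ be the $i$ vertices of smallest degree and $B=V\setminus S$. Write
\[e(G)\le e(S)+e(S,B)+e(B),\qquad e(S)+e(S,B)\le\sum_{v\in S}d(v)\le i(i-1).\]
Then bound $e(B)$ using that at most $i-1$ vertices of $B$ have degree at least $n-i$, together with the Mantel-type bound $e(B)\le|B|\cdot\Delta(G[B])/2$. The resulting function of $i$ should be maximized at $i=\floor{n/2}$. There the value matches $e(K_{\floor{n/2}-1,\ceil{n/2}+1})$, whose degree sequence realizes $d_{n/2}=n/2-1$ exactly.

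The main obstacle is this optimization for intermediate $i$, since the condition on $d_{n-i+1}$ must genuinely be used. If the structural route loses too much, I would fall back on the $\sum d^2\le ne$ inequality together with a convexity (majorization) argument that pushes the extremal degree sequence to two values.

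For uniqueness when $n\ge8$, I will trace equality through the chain of inequalities. Equality forces $i=\floor{n/2}$, with $S$ independent and all degrees at their bounds. It also forces equality in $\sum d(v)^2\le ne(G)$, so every edge $uv$ has $d(u)+d(v)=n$. This makes $G$ complete bipartite with parts of sizes $\floor{n/2}-1$ and $\ceil{n/2}+1$. The small cases $n=6,7$ are exactly where other degree patterns can tie, which is why uniqueness is claimed only from $n\ge8$.
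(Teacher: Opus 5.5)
\textbf{There is a genuine gap.} The main case, $2\le i\le\lfloor n/2\rfloor$, is only planned, not proved. You say the resulting function of $i$ ``should be'' maximized at $i=\lfloor n/2\rfloor$, and you offer two routes without carrying either out.

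The structural route cannot work as stated. At $i=\lfloor n/2\rfloor$ the term $i(i-1)$ is already about $n^2/4$, so you would need $e(B)=O(n)$. Neither Mantel nor a handshake bound on $G[B]$ gives that unless you use how the many $S$--$B$ edges constrain $G[B]$.

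Worse, the degree constraint you feed into both routes is false. Adjacency in a triangle-free graph gives $d(u)+d(v)\le n$, so $\Delta\le n-d(u)$, not $n-1-d(u)$. The claim $\Delta\le n-i$ fails for the extremal graph itself. $K_{\lfloor n/2\rfloor-1,\lceil n/2\rceil+1}$ satisfies the Chv\'atal condition only at $i=\lfloor n/2\rfloor$, yet it has $\Delta=\lceil n/2\rceil+1>n-i$. Imposing that constraint would ``prove'' a bound below the tight example.

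Without that constraint, the $\sum d(v)^2\le n\,e(G)$ relaxation plausibly does give the bound after a genuine convexity/KKT computation over all $i$. However, for odd $n$ it is not tight. At $i=(n-1)/2$ it only gives $e(G)<(n^2-5)/4=(n^2-9)/4+1$; for $n=9$ this is about $18.66$. So you must also invoke integrality.

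That non-tightness breaks your uniqueness argument for odd $n$. In $K_{(n-3)/2,(n+3)/2}$ the degrees $d_{(n+1)/2}=d_{(n+3)/2}=(n-3)/2$ lie strictly below their bound $(n-1)/2$, so ``all degrees at their bounds'' is false. Nothing forces equality in $\sum d(v)^2\le n\,e(G)$ either. For $n=9$, the sequence $(3,3,3,3,4,4,5,5,6)$ satisfies the Chv\'atal condition with $i=4$ and has $\sum d=36$, so $e=18$. Yet $\sum d^2=154<162=n\,e$, so equality tracing does not apply to it, and degree information of this kind cannot single out the extremal graph; you need structural information.

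The paper gets both the bound and uniqueness from Mantel's theorem plus a three-way split:
\begin{itemize}
\item Non-bipartite triangle-free graphs have at most $(n-1)^2/4+1$ edges, which is below the target for $n\ge 8$.
\item Bipartite graphs whose parts differ in size by at least two have at most $a(n-a)$ edges, with equality only for $K_{\lfloor n/2\rfloor-1,\lceil n/2\rceil+1}$.
\item Subgraphs of $K_{\lfloor n/2\rfloor,\lceil n/2\rceil}$ must miss at least two edges ($n$ even) or three edges ($n$ odd, $n\ge 7$). Explicit Hamiltonian paths show that missing fewer leaves the graph traceable.
\end{itemize}
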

\begin{proof}
    Let $G$ be such a graph. By Mantel's theorem, every $n$-vertex, $K_3$-free graph $G$ has $e(G) \le \floor{n^2/4}$, with equality if and only if $G \cong K_{\floor{n/2},\ceil{n/2}}$, which is traceable. As $G$ is not traceable, we have $e(G) \le \floor{n^2/4}-1$. For even $n$, the graph $K_{\floor{n/2}-1,\ceil{n/2}+1} = K_{n/2-1,n/2+1}$ is not traceable because it is bipartite with part sizes differing by two. This graph has $e(K_{\floor{n/2}-1,\ceil{n/2}+1}) = n^2/4-1$ edges, completing the proof of the tight upper bound for every even $n \ge 2$.
    
    In the remainder of the proof, we prove the upper bound for odd $n \ge 7$ and the uniqueness of the extremal graphs for all $n\ge 8$. We consider three cases: $G$ is not bipartite, $G$ is a subgraph of $K_{\floor{n/2},\ceil{n/2}}$, and $G$ is bipartite with part sizes differing by more than one.
    
    \begin{case} $G$ is not bipartite

    Let $f(n)$ be the maximum number of edges in an $n$-vertex, non-bipartite, $K_3$-free graph. Note that 
    \[
    e(K_{\floor{n/2}-1,\ceil{n/2}+1}) = 
    \begin{cases}
    (n^2-9)/4 = (n-1)^2/4+1   &\text{for } n=7\\
    (n^2-9)/4 > (n-1)^2/4+1  &\text{for odd } n \ge 9\\
    n^2/4-1 > (n-1)^2/4+1 &\text{for even } n \ge 6,\\
    \end{cases}
    \]
    so it is enough to show that $f(n) \le (n-1)^2/4+1$, which is Exercise 1.1.5 in \cite{Zhao23}.
    \end{case}
    
    \begin{case} $G$ is a subgraph of $K_{\floor{n/2},\ceil{n/2}}$

    If $n$ is even, then $K_{\floor{n/2},\ceil{n/2}}$ is Hamiltonian. Deleting any edge of a Hamiltonian graph yields a traceable graph. Therefore $G$ is obtained by deleting at least two edges of $K_{\floor{n/2},\ceil{n/2}}$, so $e(G) \le e(K_{\floor{n/2},\ceil{n/2}}) - 2 < n^2/4 - 1$, and no extremal graphs exist in this case with even $n$.

Now suppose $n$ is odd. The graph $K_{\floor{n/2},\ceil{n/2}}=K_{(n-1)/2,(n+1)/2}$ is traceable, so $G$ is a proper subgraph of $K_{(n-1)/2,(n+1)/2}$. We show that for $n \ge 5$ deleting any edge from $K_{(n-1)/2,(n+1)/2}$ yields a traceable graph, so it is necessary to delete at least two edges to obtain a non-traceable graph, and $e(G) \le e(K_{(n-1)/2,(n+1)/2}) - 2 = (n^2-9)/4$. Then we show that for $n \ge 7$ this inequality is strict, so no extremal graphs exist in this case.

Let $G$ be the graph obtained by deleting one edge from $K_{(n-1)/2,(n+1)/2}$, which is independent of the choice of edge. Without loss of generality, label the endpoints of the deleted edge with $1$ and $n-1$, where $1$ is the vertex in the larger part, and $n-1$ is the vertex in the smaller part. Label the other vertices in the larger part with the odd numbers from $3$ to $n$ and the other vertices in the smaller part with the even numbers from $2$ to $n-3$. Then $(1,2,\ldots, n-1, n)$ is a Hamiltonian path of $G$, using the fact that $n \ge 4$ (so the pairs $\set{1,2}$ and $\set{n-2,n-1}$ are edges and not $\set{1,n-1}$). Thus $G$ is traceable.

For $n \ge 7$, we show that deleting any two edges from $K_{(n-1)/2,(n+1)/2}$ yields a traceable graph, so $e(G) \le e(K_{(n-1)/2,(n+1)/2}) - 3 < (n^2-9)/4$. If the two deleted edges are not incident, then without loss of generality label them $\set{1, n-3}$ and $\set{3,n-1}$, where $1$ and $3$ are in the larger part and $n-3$ and $n-1$ are in the smaller part. Label the other vertices in the larger part with the odd numbers from $5$ to $n$ and the other vertices in the smaller part with the even numbers from $2$ to $n-5$. Using the fact that $n \ge 6$, the pairs $\set{1,2}$, $\set{n-4,n-3}$, $\set{3,4}$, and $\set{n-2,n-1}$ all are not $\set{1,n-3}$ or $\set{3,n-1}$ so are edges, so $(1,2,\ldots,n-1,n)$ is a Hamiltonian path of $G$. 

Otherwise, the two deleted edges are incident. If the deleted edges are incident at a vertex in the larger part, then without loss of generality label the deleted edges $\set{1,n-3}$ and $\set{1,n-1}$. Label the other vertices in the larger part with the odd numbers from $3$ to $n$ and the other vertices in the smaller part with the even numbers from $2$ to $n-5$. Then $(1,2,\ldots,n-1,n)$ is a Hamiltonian path of $G$. If the deleted edges are incident at a vertex in the smaller part, then without loss of generality label the deleted edges $\set{2,n-2}$ and $\set{2,n}$. Label the other vertices in the larger part with the odd numbers from $1$ to $n-4$ and the other vertices in the smaller part with the even numbers from $4$ to $n-1$. Again $(1,2,\ldots, n-1,n)$ is a Hamiltonian path of $G$.
    \end{case}
    
    \begin{case} $G$ is bipartite with part sizes differing by more than one

    Now $G$ is a subgraph of $K_{a,n-a}$ for some integer $a$ in $1 \le a \le \frac{n-2}{2}$. Thus \[e(G) \le e(K_{a,n-a}) = a(n-a) \le 
    \begin{cases}
        \frac{n-3}{2}\cdot \frac{n+3}{2} = (n^2-9)/4 &\text{for odd }n\\
        \frac{n-2}{2}\cdot \frac{n+2}{2} = n^2/4 -1 &\text{for even }n.\\
    \end{cases}\]
    The graph $K_{\floor{n/2}-1,\ceil{n/2}+1}$ is not traceable and achieves this upper bound in both the even and odd cases. Equality in the bound holds if and only if $G = K_{a,n-a}$ and $a$ is $(n-3)/2$ or $(n-2)/2$, so $K_{\floor{n/2}-1,\ceil{n/2}+1}$ is the unique extremal graph in this case. \qedhere
    \end{case}
\end{proof}

\begin{thm}
    Let $n \ge 3$. If $G$ is an $n$-vertex, $K_3$-free graph that is not Hamiltonian-connected, then \[e(G) \le e(K_{\floor{n/2},\ceil{n/2}}),\] and 
    equality holds if and only if $G = K_{\floor{n/2},\ceil{n/2}}$.
\end{thm}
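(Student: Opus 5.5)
The plan is to combine Mantel's theorem with the observation that no bipartite graph on at least $3$ vertices is Hamiltonian-connected. Together these give both the bound and the characterization of equality, without any case analysis on non-bipartite graphs.

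First, the upper bound. By Mantel's theorem, every $n$-vertex, $K_3$-free graph $G$ satisfies $e(G) \le \floor{n^2/4} = e(K_{\floor{n/2},\ceil{n/2}})$, with equality if and only if $G \cong K_{\floor{n/2},\ceil{n/2}}$. (The balanced complete bipartite graph here is the $2$-partite Tur\'an graph $\T[n][2]$.) This holds whether or not $G$ is Hamiltonian-connected, so the inequality in the statement is immediate. Moreover, any $G$ attaining equality must be $K_{\floor{n/2},\ceil{n/2}}$.

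Second, tightness: I will show that $K_{\floor{n/2},\ceil{n/2}}$ is not Hamiltonian-connected for $n \ge 3$. The argument applies to any bipartite graph $B$ with parts $X$ and $Y$, $|X| \le |Y|$, because a path in $B$ alternates between parts.
\begin{itemize}
\item If $n$ is even, a Hamiltonian path has $n$ vertices, so its two endpoints lie in different parts. For $n \ge 4$ we have $|Y| \ge 2$, so two distinct vertices of $Y$ are joined by no Hamiltonian path.
\item If $n$ is odd, a Hamiltonian path has both endpoints in the larger part $Y$, and it exists only when $|Y| = |X|+1$. For $n \ge 3$ the part $X$ is nonempty, so any vertex $x \in X$ and any $y \in Y$ are joined by no Hamiltonian path.
\end{itemize}
In either case $B$ is not Hamiltonian-connected. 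Hence $K_{\floor{n/2},\ceil{n/2}}$ is a $K_3$-free, non-Hamiltonian-connected graph attaining the bound. Combined with the uniqueness in Mantel's theorem, this proves that equality holds if and only if $G = K_{\floor{n/2},\ceil{n/2}}$.

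There is no real obstacle. The only point needing care is checking the small cases in the parity argument, namely that the required pair of vertices exists for every $n \ge 3$. For $n=3$ the graph is $K_{1,2}$: an edge pair $x \in X$, $y \in Y$ exists, and the only Hamiltonian path runs between the two vertices of $Y$. For $n=4$ the graph is $C_4 = K_{2,2}$, and two opposite vertices have no Hamiltonian path between them. I will also note in passing that Mantel's theorem should be applied with the bipartite Tur\'an graph $\T[n][2]$.
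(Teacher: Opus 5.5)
Your proposal is correct and follows the paper's proof: Mantel's theorem gives both the bound and uniqueness of $K_{\floor{n/2},\ceil{n/2}}$, and a parity argument shows this graph is not Hamiltonian-connected. The only cosmetic difference is the odd case, where the paper notes that the graph is not Hamiltonian, while you observe directly that every Hamiltonian path must have both endpoints in the larger part.
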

\begin{proof}
    Let $G$ be such a graph. By Mantel's theorem, every $n$-vertex, $K_3$-free graph $G$ has $e(G) \le \floor{n^2/4}$, with equality if and only if $G \cong K_{\floor{n/2},\ceil{n/2}}$.
    
    If $n$ is odd then $K_{\floor{n/2},\ceil{n/2}}$ is not Hamiltonian and so not Hamiltonian-connected so gives the maximum number of edges.
    
    If $n \ge 4$ is even then $K_{n/2,n/2}$ is not Hamiltonian-connected, as there is no Hamiltonian path between any two vertices in the same partite set, so gives the maximum number of edges.
\end{proof}

\begin{thm}
    Let $0 \le k \le n-3$. Let $G$ be a $n$-vertex, $K_3$-free graph. If $G$ is not $k$-path Hamiltonian then \[e(G) \le e(K_{\ceil{n/2}-1,\floor{n/2}+1}),\]
    and this bound is tight as $K_{\ceil{n/2}-1,\floor{n/2}+1}$ is an $n$-vertex, $K_3$-free graph that is not $k$-path Hamiltonian.
\end{thm}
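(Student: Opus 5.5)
The plan is to compare with Mantel's theorem and split according to the parity of $n$. First compute the target: for odd $n$,
\[
e(K_{\ceil{n/2}-1,\floor{n/2}+1}) = \tfrac{n-1}{2}\cdot\tfrac{n+1}{2} = \floor{n^2/4},
\]
because $K_{\ceil{n/2}-1,\floor{n/2}+1}=K_{(n-1)/2,(n+1)/2}$ is then the balanced complete bipartite graph. For even $n$ the target is $(n/2-1)(n/2+1)=n^2/4-1$.

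Tightness in both cases uses one observation. A path of length $0$ (a single vertex) has length at most $k$ for every $k\ge 0$. So any non-Hamiltonian graph on $n\ge 3$ vertices fails to be $k$-path Hamiltonian, since no vertex lies on a Hamiltonian cycle. The graph $K_{\ceil{n/2}-1,\floor{n/2}+1}$ is bipartite with unequal part sizes (they differ by $1$ for odd $n$ and by $2$ for even $n$). A Hamiltonian cycle in a bipartite graph alternates between the parts and so needs equal parts; hence this graph is not Hamiltonian, hence not $k$-path Hamiltonian. It is clearly $K_3$-free.

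For the upper bound with odd $n$, Mantel's theorem gives $e(G)\le\floor{n^2/4}$ directly, which is the claimed bound. For even $n$, Mantel gives $e(G)\le n^2/4$, with equality only for $G\cong K_{n/2,n/2}$. It therefore suffices to show that $K_{n/2,n/2}$ is $k$-path Hamiltonian for every $0\le k\le n-3$; then $G\ne K_{n/2,n/2}$ and $e(G)\le n^2/4-1$.

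The main step is this extension argument in $K_{n/2,n/2}$ with parts $A$ and $B$. Take a path $P$ of length $j\le n-3$; it alternates between $A$ and $B$. I would count the vertices of each part not on $P$ and close $P$ into a Hamiltonian cycle by an alternating walk through them, using that all $A$–$B$ edges are present.
\begin{itemize}
\item If $j=2m+1$ is odd, the endpoints lie in different parts and $P$ uses $m+1$ vertices from each side. The remaining vertices number $n/2-m-1\ge 1$ on each side, since $2m+1\le n-3$. From the $A$-endpoint go to an unused $B$-vertex, alternate through all remaining vertices, and finish at an unused $A$-vertex adjacent to the $B$-endpoint.
\item If $j=2m$ is even, say both endpoints are in $A$. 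Then $P$ uses $m+1$ vertices of $A$ and $m$ of $B$, leaving $n/2-m-1$ in $A$ and $n/2-m\ge 1$ in $B$. Leave one endpoint into $B$, alternate $B,A,\dots,B$ through all remaining vertices (possible since $B$ has exactly one more leftover vertex than $A$), and return to the other endpoint.
\end{itemize}
Either way $P$ lies on a Hamiltonian cycle. The only delicate point is checking that the leftover counts are nonnegative and that at least one leftover vertex exists where needed, which is exactly where the hypothesis $k\le n-3$ is used.
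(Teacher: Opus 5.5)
Your proof is correct and follows the paper's argument. Both use Mantel's theorem with a parity split, the non-Hamiltonicity of the unbalanced complete bipartite graph for tightness, and, for even $n$, the fact that $K_{n/2,n/2}$ is $k$-path Hamiltonian for $k\le n-3$ to rule out the Mantel extremal graph. The only difference is that the paper cites Chartrand--Kronk and Kronk for the stronger $(n-2)$-path Hamiltonicity of $K_{n/2,n/2}$, whereas your alternating-extension argument proves the needed case directly, and that argument is sound.
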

\begin{proof}
    Let $G$ be such a graph. By Mantel's theorem, every $n$-vertex, $K_3$-free graph $G$ has $e(G) \le \floor{n^2/4}$, with equality if and only if $G \cong K_{\floor{n/2},\ceil{n/2}}$.
    
    If $n$ is odd then $K_{\floor{n/2},\ceil{n/2}}$ is not Hamiltonian and so not $k$-path Hamiltonian so gives the maximum number of edges.

    If $n$ is even then $K_{n/2,n/2}$ is $(n-2)$-path Hamiltonian (see \cite{CK68, Kronk}) so is $k$-path Hamiltonian for every $k \le n-3$, 
    so $e(G)=n^2/4$ is not possible. As $K_{n/2-1,n/2+1}$ is not Hamiltonian and so not $k$-path Hamiltonian, $e(K_{n/2-1,n/2+1})=n^2/4-1$ gives the maximum number of edges.
\end{proof}

\begin{thm}\label{thm:r2kham}
    Let $1 \le k \le n-3$. Let $G$ be an $n$-vertex, $K_3$-free graph. If $G$ is not $k$-Hamiltonian, then 
    \[
        e(G) \le e(K_{\floor{n/2},\ceil{n/2}}),
    \]
    and this bound is tight as $K_{\floor{n/2},\ceil{n/2}}$ is an $n$-vertex, $K_3$-free graph that is not $k$-Hamiltonian.
\end{thm}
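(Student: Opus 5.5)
The upper bound is immediate from Mantel's theorem: every $n$-vertex, $K_3$-free graph satisfies $e(G) \le \floor{n^2/4} = e(K_{\floor{n/2},\ceil{n/2}})$, regardless of whether it is $k$-Hamiltonian. So the only content is tightness, namely showing that $K_{\floor{n/2},\ceil{n/2}}$ is not $k$-Hamiltonian for every $1 \le k \le n-3$. The plan is to exhibit a single vertex whose deletion leaves a non-Hamiltonian graph. Since the definition of $k$-Hamiltonicity quantifies over all sets $S$ of at most $k$ vertices and $k \ge 1$, one such vertex suffices.

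Split by the parity of $n$. If $n$ is odd, I would take $S=\emptyset$. Then $K_{(n-1)/2,(n+1)/2}$ has unequal parts, and a bipartite graph with unequal parts cannot be Hamiltonian because a Hamiltonian cycle alternates between the parts. (Equivalently, apply \cref{prop:partitekHam} with $k=0$: the largest part has size $(n+1)/2 > n/2$.) So the graph is not $0$-Hamiltonian, hence not $k$-Hamiltonian.

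If $n$ is even, I would delete one vertex $v$ from either part, so $|S|=1 \le k$. This leaves $K_{n/2-1,n/2}$, again bipartite with unequal parts, hence not Hamiltonian. Alternatively, invoke \cref{prop:partitekHam} directly with the given $k\ge1$: the largest part has size $n/2 > (n-k)/2$, so the graph is not $k$-Hamiltonian. The hypothesis $k \le n-3$ is exactly what that proposition requires.

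There is essentially no obstacle here. The one point to check is that the hypothesis $k\ge 1$ is used in the even case. For $k=0$ and even $n$ the balanced graph $K_{n/2,n/2}$ is Hamiltonian, which is why the statement starts at $k=1$. $K_3$-freeness of the bipartite graph is clear.
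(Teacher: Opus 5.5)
Your proposal is correct and follows the paper's proof. Both use Mantel's theorem for the bound, and both get tightness by parity: deleting one vertex ($|S|=1\le k$) from $K_{n/2,n/2}$ leaves an unbalanced, hence non-Hamiltonian, complete bipartite graph. The only cosmetic difference is the odd case, where you take $S=\emptyset$ and the paper deletes a vertex from the smaller part; both choices are valid.
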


Note that $0$-Hamiltonicity is equivalent to Hamiltonicity, so the case $k=0$ is addressed in Theorem 8.1 in \cite{DK26} instead of \cref{thm:r2kham}. When $n$ is even, the answers are different for $k=0$ and $k \ge 1$.

\begin{proof}
    Let $G$ be such a graph. By Mantel's theorem, every $n$-vertex, $K_3$-free graph $G$ has $e(G) \le \floor{n^2/4}$, with equality if and only if $G \cong K_{\floor{n/2},\ceil{n/2}}$.

    If $n$ is odd then $K_{\floor{n/2},\ceil{n/2}}$ is not $k$-Hamiltonian, as the removal of a vertex from the partite set of size $\floor{n/2}$ results in a non-Hamiltonian graph, so gives the maximum number of edges. If $n$ is even then $K_{n/2,n/2}$ is not $k$-Hamiltonian, as the removal of any vertex results in a non-Hamiltonian graph, so gives the maximum number of edges. 
\end{proof}

\begin{thm}
    Let $1 \le k \le n-2$. If $G$ is an $n$-vertex, $K_3$-free graph that is not $k$-Hamiltonian-connected, then \[e(G) \le e(K_{\floor{n/2},\ceil{n/2}}),\] and equality holds if and only if $G = K_{\floor{n/2},\ceil{n/2}}$.
\end{thm}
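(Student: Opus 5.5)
The plan follows the template of the preceding $r=2$ theorems. First, by Mantel's theorem every $n$-vertex, $K_3$-free graph has $e(G) \le \floor{n^2/4} = e(K_{\floor{n/2},\ceil{n/2}})$, with equality if and only if $G \cong K_{\floor{n/2},\ceil{n/2}}$. So the upper bound is immediate. The characterization of equality then reduces to one check: that $K_{\floor{n/2},\ceil{n/2}}$ is not $k$-Hamiltonian-connected for every $1 \le k \le n-2$. If it is not, it attains the bound and is the only graph that can.

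For the check, the key observation is that $k$-Hamiltonian-connectedness with $S=\emptyset$ (allowed since $|S| < k$ and $k \ge 1$) requires $G$ itself to be Hamiltonian-connected. So it suffices to show that $K_{\floor{n/2},\ceil{n/2}}$ is not Hamiltonian-connected. This is exactly the argument in the preceding theorem on Hamiltonian-connectedness.

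\begin{itemize}
\item If $n$ is odd, the graph is not Hamiltonian. A Hamiltonian path between two adjacent vertices, one in each part, closes to a Hamiltonian cycle, so no such path exists.
\item Alternatively for odd $n$, use parity directly. A Hamiltonian path has $n$ vertices and alternates between the parts, so its endpoints must both lie in the larger part. Hence two vertices in different parts, or two vertices in the smaller part, are not joined by a Hamiltonian path.
\item If $n$ is even, a Hamiltonian path alternates and has an even number of vertices, so its endpoints lie in different parts. Two vertices in the same part (which exist since $n \ge 4$) are therefore not joined by a Hamiltonian path.
\end{itemize}

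The only point needing care is the range of $n$. The hypothesis $1 \le k \le n-2$ forces $n \ge 3$, and in each case the required pair of vertices exists:
\begin{itemize}
\item for $n=3$, the graph is $K_{1,2}$, and its two vertices in different parts are not joined by a Hamiltonian path;
\item for even $n \ge 4$, each part has at least two vertices.
\end{itemize}
There is no real obstacle here. The main thing to confirm is that the paper's definition of $k$-Hamiltonian-connected includes $S=\emptyset$, which it does since the condition ranges over all sets of fewer than $k$ vertices. With that, the bound and the uniqueness of the extremal graph $K_{\floor{n/2},\ceil{n/2}}$ follow from Mantel's equality case.
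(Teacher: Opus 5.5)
Your proof is correct and matches the paper's argument. Mantel's theorem gives the bound and the uniqueness of $K_{\lfloor n/2\rfloor,\lceil n/2\rceil}$, and taking $S=\emptyset$ reduces the problem to showing this graph is not Hamiltonian-connected: for odd $n$ because it is not Hamiltonian, and for even $n\ge 4$ because two vertices in the same part are not joined by a Hamiltonian path. You also justify that a non-Hamiltonian graph on $n\ge 3$ vertices cannot be Hamiltonian-connected (a Hamiltonian path between adjacent vertices closes into a cycle) and check $n=3$ directly, which fills in the step the paper asserts ``by definition.''
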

\begin{proof}
    Let $G$ be such a graph. By Mantel's theorem, every $n$-vertex, $K_3$-free graph $G$ has $e(G) \le \floor{n^2/4}$, with equality if and only if $G \cong K_{\floor{n/2},\ceil{n/2}}$. Let $0 \le s < k$. 
    
    If $n$ is odd then $K_{\floor{n/2},\ceil{n/2}}$ is not Hamiltonian and so not Hamiltonian-connected by definition. If a graph is $k$-Hamiltonian-connected, then it is also Hamiltonian-connected, using $S = \emptyset$ in the definition. 
    For $1 \le k \le n-2$, 
    the graph $K_{\floor{n/2},\ceil{n/2}}$ is not $k$-Hamiltonian-connected and gives the maximum number of edges.
    
    If $n \ge 4$ is even then $K_{n/2,n/2}$ is not Hamiltonian-connected, as there is no Hamiltonian path between any two vertices in the same partite set. For $1 \le k \le n-2$, 
    the graph $K_{\floor{n/2},\ceil{n/2}}$ is not $k$-Hamiltonian-connected and gives the maximum number of edges.
\end{proof}

\begin{lem}[Lemma 7.2.7 in \cite{West20}]\label{minconnectivity} Deletion of an edge reduces connectivity by at most 1. If a graph $G$ is $b$-connected, and $G'$ is obtained by deleting at most $c$ edges from $G$, then $G'$ is $(b-c)$-connected.
\end{lem}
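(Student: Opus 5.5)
We prove the first assertion by induction on the number of deleted edges, which reduces it to the single-edge claim. So the main work is showing that if $G$ is $b$-connected and $e$ is an edge of $G$, then $G-e$ is $(b-1)$-connected. The conventions matter here, because $K_n$ has connectivity $n-1$ and deleting an edge of $K_n$ must not be mistaken for something worse. We therefore work directly with the definition used in this paper: $G$ is $b$-connected if $G-S$ is connected for every set $S$ of fewer than $b$ vertices. We also assume $b\le n-1$, as the definition requires; if $b-1\le 0$ there is nothing to prove.

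For the single-edge claim, let $e=xy$ and let $S$ be a set of fewer than $b-1$ vertices. We must show $(G-e)-S$ is connected.
\begin{itemize}
\item If $x\in S$ or $y\in S$, then $(G-e)-S=G-S$, which is connected since $|S|<b$.
\item Otherwise, suppose for contradiction that $(G-e)-S$ is disconnected. Since $G-S$ is connected, $e$ is a bridge of $G-S$. Then $(G-e)-S$ has exactly two components, $X\ni x$ and $Y\ni y$.
\end{itemize}

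In the second case, $|V(G)|-|S|\ge n-(b-2)\ge 3$ because $b\le n-1$. So one of the components, say $X$, contains a vertex other than $x$. Put $S'=S\cup\{x\}$. Then $|S'|<b$, while $G-S'$ is disconnected, since $X\setminus\{x\}$ is nonempty and separated from $Y$. This contradicts $b$-connectivity of $G$. The step needing care is exactly this size check, $n-|S|\ge 3$, which is what guarantees a vertex beyond the endpoints of $e$. It is the only place the bound $b\le n-1$ is used.

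For $c$ edges, apply the single-edge claim $c$ times. After deleting $j$ edges the graph is $(b-j)$-connected, since the hypothesis $b-j\le n-1$ is preserved as $j$ grows. Hence $G'$ is $(b-c)$-connected. When $b-c\le 0$ the statement is vacuous.
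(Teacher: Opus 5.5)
Your proof is correct. The paper gives no proof of this lemma and simply quotes it from West, and your argument is the standard one: enlarge a vertex set that separates $G-e$ but not $G$ by an endpoint of $e$, use the count $n-|S|\ge 3$ (from $b\le n-1$) to guarantee that some side contains a vertex other than that endpoint, and iterate for several edges. The only blemish is a labeling slip: the induction on the number of deleted edges proves the lemma's second assertion, not its first.
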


\begin{thm}\label{thm:k3conn}
    Let $1 \le k \le n-2$. If $G$ is an $n$-vertex, $K_3$-free graph that is not $k$-connected, then 
    \begin{enumerate}[(a)]
        \item for $k > \floor{n/2}$ we have $e(G) \le e(K_{\floor{n/2},\ceil{n/2}})$, and
        \item\label{part:k3conn2} for $1 \le k \le \floor{n/2}$ we have $e(G) \le e(K_{\floor{(n-1)/2},\ceil{(n-1)/2}})+k-1$,
    \end{enumerate}
and these bounds are tight as $K_{\floor{n/2},\ceil{n/2}}$ and $K_{\floor{(n-1)/2},\ceil{(n-1)/2}}$ plus a vertex of degree $k-1$ are both $n$-vertex, $K_3$-free graphs that are not $k$-connected for $k > \floor{n/2}$ and $1 \le k \le \floor{n/2}$, respectively.
\end{thm}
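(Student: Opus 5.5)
Part (a) is immediate from Mantel's theorem. Every $n$-vertex, $K_3$-free graph has at most $e(K_{\floor{n/2},\ceil{n/2}})$ edges. The graph $K_{\floor{n/2},\ceil{n/2}}$ has connectivity $\floor{n/2}$, since deleting the smaller part disconnects it and every vertex has degree at least $\floor{n/2}$. So for $k>\floor{n/2}$ it is not $k$-connected, and the bound is attained.

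For part (b) let $1\le k\le\floor{n/2}$ and let $G$ be $K_3$-free and not $k$-connected.

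\textbf{Tightness.} Take $K_{\floor{(n-1)/2},\ceil{(n-1)/2}}$ plus a new vertex $v$ joined to $k-1$ vertices of one part. The part needs at least $k-1$ vertices, which holds since $k-1\le\floor{n/2}-1\le\floor{(n-1)/2}$. This graph is triangle-free because $N(v)$ lies inside one part, which is independent. It is not $k$-connected because deleting $N(v)$ isolates $v$ (and $n-k\ge 2$ vertices remain).

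\textbf{Upper bound.} I would use a separating set. Since $G$ is not $k$-connected, there is a set $S$ with $|S|=s\le k-1$ such that $G-S$ is disconnected (or $G$ itself is disconnected when $s=0$). Write $V(G)\setminus S=A\cup B$ with no edges between $A$ and $B$, where $|A|=a\le|B|=b$, $a\ge1$ and $a+b=n-s$. I then bound three contributions separately:
\begin{itemize}
\item edges inside $S\cup A$: by Mantel, at most $\floor{(s+a)^2/4}$;
\item edges inside $S\cup B$ not already counted: at most $\floor{(s+b)^2/4}$;
\item edges inside $S$ are double counted, so this overcounts, which only weakens the bound and is harmless.
\end{itemize}
A cleaner route is to count edges incident to $A$ and edges not incident to $A$. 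Each vertex of $A$ has all its neighbors in $A\cup S$. Triangle-freeness gives $e(G[A])+e(A,S)\le\floor{(a+s)^2/4}$ and $e(G[B\cup S])\le\floor{(b+s)^2/4}$. Hence
\[ e(G)\le \floor{(a+s)^2/4}+\floor{(n-a)^2/4}. \]

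\textbf{Main obstacle.} The main obstacle is showing this is at most $\floor{(n-1)^2/4}+k-1$ for all $1\le a\le (n-s)/2$. The right-hand side is convex in $a$, so its maximum is at an endpoint.

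At $a=1$ the bound is $\floor{(s+1)^2/4}+\floor{(n-1)^2/4}$. This is too weak as written, because a single vertex in $A$ contributes only $s$ edges, not $(s+1)^2/4$. So I would refine the count: edges incident to $A$ number at most $e(G[A])+a\cdot s$, and also at most $\floor{(a+s)^2/4}$. Using the $a\cdot s$ form, the case $a=1$ gives exactly $\floor{(n-1)^2/4}+s\le$ the target.

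At the other end, $a=(n-s)/2$, I would verify $\floor{(n+s)^2/16}+\floor{(n+s)^2/16}\le\floor{(n-1)^2/4}+s$ using $s\le k-1<n/2$. This should reduce to a quadratic inequality in $n$ and $s$, and I would check it including the small cases.

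For intermediate $a$, I would use $\min\{as+\floor{a^2/4},\floor{(a+s)^2/4}\}+\floor{(n-a)^2/4}$ together with convexity, and expect the $a=1$ endpoint to dominate. Alternatively, \cref{minconnectivity} applied to $K_{\floor{n/2},\ceil{n/2}}$ gives intuition but not a proof, so I would rely on the computation.
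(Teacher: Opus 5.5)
Your part (a) and both tightness constructions are fine. The upper bound in part (b), however, has a genuine gap: the inequality you leave to ``computation'' is false when the separator is large.

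Bounding the two sides of the cut separately ignores how triangle-freeness couples $A$, $S$ and $B$, and this fails in two ways.
\begin{itemize}
\item Mantel on $G[A\cup S]$ is nearly tight only when $G[S]$ carries many edges. Those edges are counted again in $e(G[B\cup S])$. The overcount you called harmless is exactly what breaks the argument.
\item The estimate $as+\lfloor a^2/4\rfloor$ allows an edge inside $A$ whose two ends are both joined to all of $S$, which would create triangles.
\end{itemize}
At $a=(n-s)/2$, your endpoint inequality $(n+s)^2/8\le (n-1)^2/4+s$ is equivalent to $s^2+(2n-8)s\le n^2-4n+2$, i.e.\ $s\le(\sqrt2-1)n+4-3\sqrt2$. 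But $s=k-1$ can be as large as $\lfloor n/2\rfloor-1$. For $n=12$, $k=6$, $s=5$, $a=3$, $b=4$ your bound gives $16+20=36$, while the target is $\lfloor 121/4\rfloor+5=35$.

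The refined bound $\min\{as+\lfloor a^2/4\rfloor,\lfloor (a+s)^2/4\rfloor\}+\lfloor (n-a)^2/4\rfloor$ does not help. A minimum of convex functions need not be convex. For $n=10$, $k=5$, $s=4$ it takes the values $24,25,24$ at $a=1,2,3$, exceeding the target $e(K_{4,5})+4=24$ at an interior point. So the $a=1$ endpoint does not dominate, and no check of small cases can finish the proof along these lines.

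What is missing is a global argument. The paper splits on the structure of $G$:
\begin{itemize}
\item If $G$ is not bipartite, it uses $f(n)\le (n-1)^2/4+1$ for non-bipartite triangle-free graphs. This is below the target when $k\ge 3$, and $k=1,2$ get separate arguments.
\item If $G\subseteq K_{\lfloor n/2\rfloor,\lceil n/2\rceil}$, it uses that this graph is $\lfloor n/2\rfloor$-connected and that deleting an edge lowers connectivity by at most one (\cref{minconnectivity}). So at least $\lfloor n/2\rfloor-k+1$ edges are missing, giving exactly $\lfloor n^2/4\rfloor-\lfloor n/2\rfloor+k-1=\lfloor (n-1)^2/4\rfloor+k-1$.
\item If $G\subseteq K_{a,n-a}$ with $a\le (n-2)/2$, it compares $a$ with $k$.
\end{itemize}
The lemma you set aside as mere intuition is precisely what makes the balanced case sharp. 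If you want to keep the separator viewpoint, you would at least have to count non-edges globally. For bipartite $G\subseteq K_{X,Y}$, for example, use $e(G)\le |X||Y|-|A\cap X|\,|B\cap Y|-|B\cap X|\,|A\cap Y|$, and handle non-bipartite $G$ separately.
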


\begin{proof}
    For some $1 \le k \le n-2$, let $G$ be such an $n$-vertex graph. Notice that the removal of any part of a bipartite graph results in a disconnected graph and so when $k > \floor{n/2}$ the graph $K_{\floor{n/2},\ceil{n/2}}$ is not $k$-connected. By Mantel's theorem, $K_{\floor{n/2},\ceil{n/2}}$ is the unique edge-maximal graph among all $K_3$-free graphs. 
    
    We now consider when $1 \le k \le \floor{n/2}$. In this case, we show that the maximum number of edges is achieved by $K_{\floor{(n-1)/2},\ceil{(n-1)/2}}$ plus a vertex of degree $k-1$. Notice that this graph is not $k$-connected as the removal of all $k-1$ neighbors from the special vertex of degree $k-1$ results in a disconnected graph. We show that all other $n$-vertex graphs that are $K_3$-free and not $k$-connected have at most $e(K_{\floor{(n-1)/2},\ceil{(n-1)/2}})+k-1$ edges.

    \begin{case} $G$ is not bipartite

    Let $f(n)$ be the maximum number of edges in an $n$-vertex, non-bipartite, $K_3$-free graph. Note that 
    for $k\ge 3$ we have \[
    e(K_{\floor{(n-1)/2},\ceil{(n-1)/2}})+k-1 = 
    \begin{cases}
    (n-1)^2/4 +k-1 > (n-1)^2/4+1  &\text{for odd } n \\
    (n^2-2n)/4 +k-1> (n-1)^2/4+1 &\text{for even } n,\\
    \end{cases}
    \]
    and $f(n) \le (n-1)^2/4+1$ by Exercise 1.1.5 in \cite{Zhao23}, so all extremal graphs are bipartite for $k \ge 3$.
    
    For $k=2$, we show that the graphs achieving $f(n)$ edges are $k$-connected, and so we must have $e(G)<\floor{(n-1)^2/4}+1 \le e(K_{\floor{(n-1)/2},\ceil{(n-1)/2}})+k-1$. Since $G$ is not bipartite and is $K_3$-free, $G$ contains an odd cycle of length $\ell \ge 5$ (so $n \ge 5$). As shown in \cite{hardmath}, $$e(G)=\floor{(n-\ell)^2/4}+2(n-\ell)+\ell=(n-1)^2/4+1$$ only when $G$ consists of a $K_{\floor{(n-\ell)/2},\ceil{(n-\ell)/2}}$ plus an $\ell$-cycle with boundary of size $2(n-\ell)$ where $\ell=5$.
    
    Since $G$ is $K_3$-free, each vertex in the parts of $K_{\floor{(n-5)/2},\ceil{(n-5)/2}}$ has exactly two neighbors in the $5$-cycle. The removal of any vertex from the parts of $K_{\floor{(n-5)/2},\ceil{(n-5)/2}}$ or the $5$-cycle results in a connected graph. Then $G$ is $2$-connected, a contradiction, so we must have $e(G)<\floor{(n-1)^2/4}+1 \le e(K_{\floor{(n-1)/2},\ceil{(n-1)/2}})+1$.

    In the case that $k=1$, the graph $G$ is disconnected. Let $C_1, \dots, C_i$ be the connected components of $G$ with sizes $c_1 \ge  c_2 \ge \cdots \ge c_i$, respectively. Then $i\ge 2$, $\sum_{j=1}^i c_j =n$, and $c_j \le n-1$ for all $j\le i$. By Mantel's theorem, the number of edges in $G$ is maximized when each component $C_j$ is a $\T[c_j][2]$. Among such graphs $G$ where every component is a $T_2(c_j)$, notice that moving two vertices from $C_j$ to $C_1$ results in the deletion of $\floor{c_j/2}+\floor{(c_j-1)/2}$ edges and the addition of $\ceil{c_1/2}+\ceil{(c_1+1)/2}$ edges for a net increase in the number of edges since $c_1 \ge c_j$, so the extremal graphs have $c_i \le \cdots \le c_2 \le 1$. If $C_j$ consists of a single isolated vertex, then the removal of the vertex from $C_j$ and the addition of the vertex to $C_1$ strictly increases the number of edges in $G$. The number of edges in $G$ is then maximized when $c_1=n-1$ and $c_2=1$. Then $G$ consists of a $K_{\floor{(n-1)/2},\ceil{(n-1)/2}}$ plus an isolated vertex and $e(G) = e(K_{\floor{(n-1)/2},\ceil{(n-1)/2}})\le (n-1)^2/4$.
    \end{case}

    \begin{case} $G$ is a subgraph of  $K_{\floor{n/2},\ceil{n/2}}$

    The graph $K_{\floor{n/2},\ceil{n/2}}$ is $k$-connected for $1 \le k \le \floor{n/2}$ as the removal of any $k-1$ vertices would result in a complete bipartite graph with nonempty parts. By \cref{minconnectivity}, deleting any $\floor{n/2}-k$ edges from $K_{\floor{n/2},\ceil{n/2}}$ results in a $k$-connected bipartite graph, thus $G$ is obtained by deleting at least $\floor{n/2}-k+1$ edges of $K_{\floor{n/2},\ceil{n/2}}$, so \begin{align*}
        e(G) &\le e(K_{\floor{n/2},\ceil{n/2}})-\floor{n/2}+k-1 \\
        &= e(K_{\floor{(n-1)/2},\ceil{(n-1)/2}})+k-1. 
    \end{align*}    
    \end{case}

    \begin{case} $G$ is bipartite with part sizes differing by more than one
    
    In this case, $G$ is a subgraph of $K_{a,n-a}$ for some integer $a$ in $1 \le a \le \frac{n-2}{2}$. Thus $e(G) \le e(K_{a,n-a}) = a(n-a)$. We show $e(G) \le e(K_{\floor{(n-1)/2},\ceil{(n-1)/2}})+k-1$ by considering two cases depending on how $k$ compares to $a$. 
    
    For $k>a$, notice that the removal of any part results in a disconnected graph and so we have $K_{a,n-a}$ is not $k$-connected. For $a \le (n-2)/2$, we have
    \[
        4\parens[\bigg]{\frac{n^2-2n}{4}+a - a(n-a)} = (n-2a-2)(n-2a) \ge 0,
    \]
    with equality if and only if $a = (n-2)/2$, so, since $a \le k-1$, 
    \begin{align*}
        e(K_{a,n-a})=a(n-a) &\le \begin{cases}
            (n-1)^2/4 +k-1  &\text{for odd } n \\
            (n^2-2n)/4 +k-1 &\text{for even } n \\
        \end{cases}\\
        &= e(K_{\floor{(n-1)/2},\ceil{(n-1)/2}})+k-1,
    \end{align*} with equality if and only if $a = (n-2)/2 = k-1$.

    For $1 \le k \le a$, the graph $K_{a,n-a}$ is $a$-connected. By \cref{minconnectivity} at least $a-k+1$ edges need to be removed from $G$ to yield a graph that is not $k$-connected and so \begin{align*}
        e(G) &\le a(n-a)-(a-k+1) = -a^2+(n-1)a+k-1\\
        &< e(K_{\floor{(n-1)/2},\ceil{(n-1)/2}})+k-1
    \end{align*} edges, where the last inequality follows from maximizing the concave-down quadratic function of $a$ by substituting in $a=\floor{(n-2)/2}$.\qedhere
    \end{case}
\end{proof}

\section{Clique Density Conditions}\label{sec:clique}

In this section we extend our edge density conditions to clique density conditions via an observation about the extremal graphs. 
First, we use a similar observation to obtain best-possible clique density conditions for Hamiltonicity-like properties in not necessarily $K_{r+1}$-free graphs.

Recall that the \emph{colex} (or \emph{colexicographic}) \emph{order} on finite subsets of $\N$, denoted by $<_C$, is defined by $A <_C B$ if and only if $\max(A\symd B) \in B$. The \emph{colex graph on $m$ edges}, denoted by $\cC(m)$, has edge set consisting of the first $m$ pairs of natural numbers in colex order. Its vertex set is the union of all of the edges, i.e., the subset of $\N$ needed to support the edges and avoid isolated vertices.

The colex graph equivalently is the graph on $m$ edges consisting of the largest complete graph $K_p$ that can fit on $m$ edges together with one additional vertex whose degree is the remaining number of edges, $m-\binom{p}{2}$. It follows from the Kruskal-Katona Theorem that the colex graph on $m$ edges has the maximum number of $t$-cliques among all graphs on at most $m$ edges. For a proof of this implication, see \cite{KR23}.

\begin{thm}[Corollary of Kruskal-Katona Theorem \cite{Katona, Kruskal}]\label{thm:KK}
    For every $t \ge 2$, if $G$ is a graph on at most $m$ edges, then $\k(G) \le \k(C(m))$.
\end{thm}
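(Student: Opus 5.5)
The plan is to derive \cref{thm:KK} from the Kruskal--Katona theorem by a shadow-counting argument, taking as given the cited reduction in \cite{KR23}. Recall the Lov\'asz form of Kruskal--Katona: if $\mathcal{F}$ is a family of $t$-sets with $|\mathcal{F}| = \binom{x}{t}$ for real $x \ge t$, then its $2$-shadow satisfies $|\partial_2 \mathcal{F}| \ge \binom{x}{2}$. More precisely, in the exact form, the colex initial segment of $t$-sets of a given size has the smallest $2$-shadow. Let $G$ have at most $m$ edges, and let $\mathcal{F}$ be the family of $t$-cliques of $G$. Every pair contained in a member of $\mathcal{F}$ is an edge of $G$, so $|\partial_2\mathcal{F}| \le e(G) \le m$.

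The argument then runs by contradiction. Suppose $\k(G) > \k(\cC(m))$. Take $\mathcal{F}'$ to be the first $\k(\cC(m))+1$ $t$-sets in colex order. By exact Kruskal--Katona, $|\partial_2 \mathcal{F}'| \le |\partial_2 \mathcal{F}| \le m$. Since $\mathcal{F}'$ is a colex initial segment, its $2$-shadow is itself a colex initial segment of pairs. So $\partial_2\mathcal{F}'$ is contained in the first $m$ pairs in colex order, which is the edge set of $\cC(m)$. Every member of $\mathcal{F}'$ then has all its pairs in $\cC(m)$, hence is a $t$-clique of $\cC(m)$. This gives $\k(\cC(m)) \ge \k(\cC(m))+1$, a contradiction.

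The main step needing care is that the shadow of a colex initial segment is again a colex initial segment. This is a standard fact: if $A$ is in the segment and $B <_C A$, then every subset of $B$ is dominated in colex by some subset of $A$. It is routine but is the one nontrivial combinatorial input beyond Kruskal--Katona itself.

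Finally, I will note the equivalence with the description "largest $K_p$ plus one vertex of degree $m-\binom p2$". The first $\binom p2$ pairs in colex order are exactly the pairs within $\{1,\dots,p\}$, and the next pairs are $\{i,p+1\}$ for $i = 1, 2, \dots$. This confirms that $\cC(m)$ is the graph the paper uses.
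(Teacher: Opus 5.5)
Your argument is correct: it is the standard way this corollary is extracted from Kruskal--Katona. The paper gives no proof of its own, only the pointer to \cite{KR23}, so there is no different route to compare against. Two steps need repair, though neither affects the conclusion.

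First, you have $|\mathcal{F}'|\le|\mathcal{F}|$, not $|\mathcal{F}'|=|\mathcal{F}|$. So the inequality $|\partial_2\mathcal{F}'|\le|\partial_2\mathcal{F}|$ should come from applying the (iterated) exact Kruskal--Katona theorem to a subfamily of $\mathcal{F}$ of size $|\mathcal{F}'|$, whose $2$-shadow lies inside $\partial_2\mathcal{F}$.

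Second, and more importantly, your justification that $\partial_2\mathcal{F}'$ is a colex initial segment proves the wrong inclusion. Knowing that every pair inside some $B<_C A$ is dominated by a pair inside $A$ only shows that $\partial_2\mathcal{F}'$ lies below the top pair of the last member of $\mathcal{F}'$. That is an upper bound, not downward closure. What you need is: if $S\subseteq A\in\mathcal{F}'$ and $T<_C S$ is a pair, then $T\subseteq B$ for some $t$-set $B\le_C A$. Without this, $|\partial_2\mathcal{F}'|\le m$ does not give $\partial_2\mathcal{F}'\subseteq E(\mathcal{C}(m))$.

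A correct argument goes as follows. Let $f(T)$ be the colex-least $t$-set containing the pair $T=\{a<b\}$. This is $T$ together with the $t-2$ smallest elements of $\mathbb{N}\setminus T$. Explicitly:
\begin{itemize}
\item $f(T)=\{1,\dots,t\}$ if $b\le t$;
\item $f(T)=\{1,\dots,t-1,b\}$ if $b>t$ and $a\le t-1$;
\item $f(T)=\{1,\dots,t-2,a,b\}$ if $a\ge t$.
\end{itemize}
From this description $f$ is weakly increasing in colex order. Hence $f(T)\le_C f(S)\le_C A$, so $f(T)\in\mathcal{F}'$ and $T\in\partial_2\mathcal{F}'$. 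Alternatively, cite the cascade form of Kruskal--Katona, which states outright that shadows of colex initial segments are colex initial segments.

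With these two fixes the contradiction goes through exactly as you wrote it.
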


One immediate consequence of \cref{thm:KK} is as follows.
\begin{lem}\label{thm:generalcolex}
    Let $\mathcal{A}$ be a set of $n$-vertex graphs. Let $m = \max\set{e(G):G\in\mathcal{A}}$. Let $G \in \mathcal{A}$. For every $t \ge 2$,
    \[
        \k(G) \le \k(\cC(m)),
    \]
    and if $\cC(m) \in \mathcal{A}$ then this upper bound is tight as equality holds if $G \cong \cC(m)$.
\end{lem}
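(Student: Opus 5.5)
This is a direct application of \cref{thm:KK}; the only work is to check its hypothesis and then read off tightness. We fix $t \ge 2$ and an arbitrary $G \in \mathcal{A}$.

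First we note that $m = \max\set{e(G):G\in\mathcal{A}}$ is well defined. Every graph in $\mathcal{A}$ has exactly $n$ vertices, so its edge count is an integer between $0$ and $\binom{n}{2}$. The set of edge counts is therefore finite, and it is nonempty because $G \in \mathcal{A}$. Hence the maximum $m$ exists, and by definition $e(G) \le m$. Thus $G$ is a graph on at most $m$ edges, and \cref{thm:KK} applies directly to give $\k(G) \le \k(\cC(m))$. No property of $\mathcal{A}$ beyond membership is used here; in particular, $G$ need not be isomorphic to anything colex-like.

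For tightness, suppose $\cC(m) \in \mathcal{A}$, understood up to isomorphism, with isolated vertices added if necessary so that it has $n$ vertices. Adding isolated vertices changes no clique counts for $t \ge 2$, so $\k(\cC(m))$ is unaffected. Taking $G \cong \cC(m)$ gives $\k(G) = \k(\cC(m))$, so the upper bound is attained within $\mathcal{A}$ and is best possible.

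The only step needing any care is this vertex-count convention. The colex graph as defined has no isolated vertices, so it may have fewer than $n$ vertices, while the members of $\mathcal{A}$ have exactly $n$. We resolve this by reading "$\cC(m) \in \mathcal{A}$" as containment after padding with isolated vertices. This is exactly how the statement will be applied later, where the colex graph is $K_{n-1}$ plus a vertex of degree $\ell+1$ and so already has $n$ vertices.
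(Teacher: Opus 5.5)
Your proof is correct and follows the paper's argument: $e(G)\le m$ by the definition of $m$, \cref{thm:KK} then gives $\k(G)\le\k(\cC(m))$, and $\cC(m)\in\mathcal{A}$ gives tightness. Your isolated-vertex padding convention is a sensible addition that the paper leaves implicit. It is in fact needed in the traceability case, since $\cC(\binom{n-1}{2})=K_{n-1}$ has only $n-1$ vertices, so your closing claim that the colex graph always already has $n$ vertices is slightly off.
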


\begin{proof}
    Let $G \in \mathcal{A}$. Then $e(G) \le m$ by the definition of $m$. By \cref{thm:KK}, for all $t \ge 2$, we have $\k(G) \le \k(C(m))$. The condition $\cC(m) \in \mathcal{A}$ ensures that this upper bound is achieved.
\end{proof}

Recall that the extremal graphs in Theorem \ref{thm:ore} and the analogous theorems for the other properties implied by \cref{thm:kstableedge} all consist of a complete graph together with one more vertex of a given degree, so they are colex graphs. Thus, as a consequence of these theorems and \cref{thm:generalcolex}, we obtain clique conditions sufficient to imply the same properties.

\begin{cor}\label{cor:ore}
    Let $G$ be a graph on $n$ vertices. Let $t \ge 2$.
    \begin{enumerate}[(a)]
        \item If $G$ is not traceable, then $\k(G) \le \k(\cC(\binom{n-1}{2}))$. Equality holds if $G$ is a $\cC(\binom{n-1}{2})\cong K_{n-1}$ plus an isolated vertex.
        \item\label{part:cororeham} If $G$ is not Hamiltonian, then $\k(G) \le \k(\cC(\binom{n-1}{2}+1))$. For $n \ge 2$, equality holds if $G \cong \cC(\binom{n-1}{2}+1)$, which is a $K_{n-1}$ plus a pendant edge.
        \item If $G$ is not Hamiltonian-connected, then $\k(G) \le \k(\cC(\binom{n-1}{2}+2))$. For $n \ge 3$, equality holds if $G \cong \cC(\binom{n-1}{2}+2)$, which is a $K_{n-1}$ plus a vertex of degree $2$.
        \item If $G$ is not $k$-path Hamiltonian for some $0 \le k \le n-3$, then $\k(G) \le \k(C(\binom{n-1}{2}+k+1))$. 
        Equality holds if $G \cong \cC(\binom{n-1}{2}+k+1)$.
        \item\label{part:kHamclique} If $G$ is not $k$-Hamiltonian for some $0 \le k \le n-3$, then $\k(G) \le \k(C(\binom{n-1}{2}+k+1))$. 
        Equality holds if $G \cong \cC(\binom{n-1}{2}+k+1)$.
        \item If $G$ is not $k$-Hamiltonian-connected for some $1 \le k \le n-3$, then $\k(G) \le \k(C(\binom{n-1}{2}+k+1))$. 
        Equality holds if $G \cong \cC(\binom{n-1}{2}+k+1)$.
        \item If $G$ is not $k$-connected for some $1 \le k \le n-1$, then $\k(G) \le \k(C(\binom{n-1}{2}+k-1))$. 
        Equality holds if $G \cong \cC(\binom{n-1}{2}+k-1)$.
    \end{enumerate}
\end{cor}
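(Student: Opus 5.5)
Every part of the corollary will follow from \cref{thm:generalcolex}. For each part I take $\mathcal{A}$ to be the set of $n$-vertex graphs avoiding the property in question. Then I need two facts: first, that $\max\{e(G):G\in\mathcal{A}\}$ is the stated number $m$, namely $\binom{n-1}{2}+\ell+1$ with $\ell$ read off from \cref{table}; second, that $\cC(m)\in\mathcal{A}$.

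The upper bound $e(G)\le m$ comes from \cref{thm:kstableedge} (or directly from \cref{thm:ore} for parts (a)--(c)). For $k$-connectedness, $\ell=k-2$, so $m=\binom{n-1}{2}+k-1$. The bound $\k(G)\le\k(\cC(m))$ then holds by \cref{thm:generalcolex}. Strictly speaking, \cref{thm:generalcolex} needs only $e(G)\le m$, together with \cref{thm:KK}. So tightness of the edge bound is not needed for the inequality, only for the equality statement.

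For the equality statements, I first identify $\cC(m)$. As recalled above, the colex graph on $m$ edges is the largest complete graph $K_p$ with $\binom p2\le m$ plus one vertex of degree $m-\binom p2$. When $0\le \ell+1\le n-2$, we have $\binom{n-1}{2}\le m<\binom n2$. Hence $p=n-1$, and $\cC(m)$ is $K_{n-1}$ plus a vertex $v$ of degree $\ell+1$, padded with an isolated vertex if $\ell=-1$ so that the graph has $n$ vertices.

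Next I check that this graph avoids each property, using the same arguments as in \cref{prop:gell} with $\T[n-1][r]$ replaced by $K_{n-1}$. In (a), the isolated vertex rules out a Hamiltonian path. In (b), the pendant vertex rules out a Hamiltonian cycle. In (c), there is no Hamiltonian path between the two neighbours of $v$ once $n\ge4$. In (d), $N(v)$ is a clique of size $k+1$, hence traceable; a path of length $k$ through $N(v)$ lies in no Hamiltonian cycle, since any cycle containing it and $v$ closes up on $k+2<n$ vertices. In (e), deleting $k$ neighbours of $v$ leaves a pendant vertex. In (f), deleting $k-1$ neighbours of $v$ disconnects the graph; I use $k-1$ here because $v$ has degree $k-1$. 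In (g), deleting the $k-1$ neighbours of $v$ isolates it.

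The main obstacle is the boundary bookkeeping. I must check that $\ell+1\le n-2$ in every stated range of $k$, so that $p=n-1$ and not $p=n$. I must also handle the stated small-$n$ provisos, such as $n\ge 2$ or $n\ge3$ for the pendant and degree-$2$ cases, and $n\ge n(P)$ so that \cref{thm:kstableedge} applies. In (f), when $k=n-1$ we get $m=\binom n2-1$; then $\cC(m)$ is $K_n$ minus an edge, which is still not $(n-1)$-connected, because removing the other $n-3$ neighbours of an endpoint of the missing edge isolates it.

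For small $n$ below $n(P)$, I would verify directly that the bound still follows from \cref{thm:KK}. One way is to note that a graph failing the property has at most $m$ edges, either trivially or by the Chvátal-type argument. I expect these edge cases to be the only places needing care.
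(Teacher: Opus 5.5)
Your overall plan is the same as the paper's. You take $\mathcal{A}$ to be the $n$-vertex graphs failing the property, get $m$ from \cref{thm:kstableedge} and \cref{table}, check $\cC(m)\in\mathcal{A}$ (which the paper simply takes from \cref{rem:colexproperties}), and apply \cref{thm:generalcolex}.

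The one step that fails as written is your check that $\cC(m)\in\mathcal{A}$ in part (f). There $m=\binom{n-1}{2}+k+1$, so $\cC(m)$ is $K_{n-1}$ plus a vertex $v$ of degree $k+1$, not $k-1$. You have transplanted the $k$-connectedness argument from (g). Deleting $k-1$ neighbours of $v$ does not disconnect this graph. Instead it leaves $v$ with exactly two neighbours $x,y$, in a graph on $n-k+1\ge 4$ vertices (using $k\le n-3$). A Hamiltonian $x$--$y$ path has $v$ as an internal vertex, so it must use both edges $xv$ and $vy$. It is therefore just $(x,v,y)$, with only $3$ vertices. Hence $G-S$ is not Hamiltonian-connected for this $S$ with $\abs{S}=k-1<k$, and $\cC(m)$ is not $k$-Hamiltonian-connected. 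This is part (e) of \cref{prop:gell} with $\T[n-1][r]$ replaced by $K_{n-1}$, which your stated strategy of mimicking \cref{prop:gell} would have produced.

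A smaller slip: your boundary remark about $k=n-1$ concerns $k$-connectedness, i.e.\ part (g), not (f), and its count is off by one. In $K_n$ minus an edge $uv$, removing $n-3$ of the common neighbours of $u$ and $v$ leaves a path $u\,w\,v$, which is connected. You must remove all $n-2$ of them, which is allowed since $n-2<n-1$. This case needs no separate treatment anyway: your general (g) argument (delete the $k-1=n-2$ neighbours of $v$) already covers it.

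With these corrections the proof goes through. Two of your points are things the paper leaves implicit: padding $\cC(m)$ with an isolated vertex when $\ell=-1$ so that it is an $n$-vertex graph in $\mathcal{A}$, and noting that the inequality needs only $e(G)\le m$.
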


\begin{proof}
    Let $\A$ be the set of $n$-vertex graphs which are not traceable, Hamiltonian, Hamiltonian-connected, $k$-path Hamiltonian, $k$-Hamiltonian, $k$-Hamiltonian-connected, or $k$-connected for parts (a) through (g), respectively. By \cref{table}, \cref{thm:kstableedge}, and \cref{rem:colexproperties}, the values of $m$ (as defined in \cref{thm:generalcolex}) are $\binom{n-1}{2}$, $\binom{n-1}{2}+1$, $\binom{n-1}{2}+2$, $\binom{n-1}{2}+k+1$, $\binom{n-1}{2}+k+1$, $\binom{n-1}{2}+k+1$, and $\binom{n-1}{2}+k-1$, respectively. In each case we have $\cC(m) \in \mathcal{A}$. Then \cref{thm:generalcolex} implies that $\k(G) \le \k(\cC(m))$ with equality if $G \cong \cC(m)$.
\end{proof}

Parts (a) and (b) of \cref{cor:ore} also were special cases of Theorems 1.6 and 1.7 of Chakraborti and Chen \cite{ChakrabortiChen}. Part (d) is closely related to a corollary of a theorem of F{\"u}redi, Kostochka, and Luo \cite{FKL19} on the maximum number of $t$-cliques in graphs of minimum degree at least $d > \ell$ which do not have the property that every linear forest on $\ell$ edges is contained in a Hamiltonian cycle.

In \cite{DK26} we found the maximum number of $t$-cliques in $K_{r+1}$-free non-Hamiltonian graphs by observing that the graphs $\Gs$ are \emph{$r$-partite colex Tur\'{a}n graphs} (graphs consisting of the first $m$ edges in colex order from a complete $r$-partite graph on vertex set $\N$) and using a theorem of Frohmader \cite{Frohmader} analogously to how the Kruskal-Katona Theorem is used above. Now that we have \cref{thm:cliquestability}, we can find clique density conditions in $K_{r+1}$-free graphs for the other properties. The theorem for Hamiltonicity and chorded pancyclicity (Corollary 6.4 and Corollary 7.3 in \cite{DK26}) is included as part \ref{part:Hamclique} in the theorem statement below for completeness.

\begin{cor}\label{cor:cliques}
    Let $G$ be an $n$-vertex, $K_{r+1}$-free graph where $r \ge 3$. Let $t \ge 2$.
\begin{enumerate}[(a)]
\item If $G$ is not traceable and \[n \ge \begin{cases}20 & \text{if }r=3\\ 1 & \text{if } r\ge4 
\end{cases},\quad\text{then } \k(G) \leq \k(\Gs[r][n][-1]).\] Equality holds if $G \cong \Gs[r][n][-1]$, which is $\T[n-1][r]$ plus an isolated vertex.

\item\label{part:Hamclique} If $G$ is not Hamiltonian (or, for $n\ge 4$, if $G$ is not chorded pancyclic) and \[n \ge \begin{cases}26 & \text{if }r=3\\ 11 & \text{if } r=4\\ 
2 & \text{if } r \ge 5\end{cases},\quad\text{then } \k(G) \leq \k(\Gs[r][n][0]).\]
Equality holds if $G \cong \Gs[r][n][0]$, which is $\T[n-1][r]$ plus a vertex of degree $1$.

\item If $G$ is not Hamiltonian-connected and \[n \ge \begin{cases} 32 & \text{if } r=3\\ 16 & \text{if } r=4\\ 10 & \text{if } r=5\\
4 & \text{if } r \ge 6\end{cases},\quad\text{then }\k(G) \leq \k(\Gs[r][n][1]).\] 
Equality holds if $G \cong \Gs[r][n][1]$, which is $\T[n-1][r]$ plus a vertex of degree $2$ whose neighbors are adjacent.

\item If $G$ is not $k$-path Hamiltonian and \[n \ge \begin{cases} 6k+26 & \text{if } r=3\\ 5k+11 & \text{if } 4 \le r \le 7\\  k+5+ 4(k+4)/(r-4) & \text{if } r \ge 8\end{cases},\quad\text{then } \k(G) \leq \k(\Gs[r][n][k]).\] 
Equality holds if $G \cong \Gs[r][n][k]$.

\item If $G$ is not $k$-Hamiltonian and \[n \ge \begin{cases} 6k+26 & \text{if } r=3\\ 5k+11 & \text{if } 4 \le r \le 7\\  k+5+ 4(k+4)/(r-4) & \text{if } r \ge 8\end{cases},\quad\text{then }\k(G) \leq \k(\Gs[r][n][k]).\]
Equality holds if $G \cong \Gs[r][n][k]$.

\item If $G$ is not $k$-Hamiltonian-connected and \[n \ge \begin{cases} 6k+26 & \text{if } r=3\\ 5k+11 & \text{if } r=4\\ 3k+7 & \text{if } 5 \le r \le 7\\ k+5+4(k+4)/(r-4) & \text{if } r \ge 8\end{cases},\quad\text{then }\k(G) \leq \k(\Gs[r][n][k]).\] 
Equality holds if $G \cong \Gs[r][n][k]$.

\item If $G$ is not $k$-connected and \[n \ge \begin{cases} 6k+14 & \text{if } r=3\\ 5k+1 & \text{if } 4 \le r \le 7\\ 2k+5 & \text{if } r \ge 8\end{cases},\quad\text{then }\k(G) \leq \k(\Gs[r][n][k-2]).\] 
Equality holds if $G \cong \Gs[r][n][k-2]$.
\end{enumerate}
\end{cor}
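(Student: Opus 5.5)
Each part follows from \cref{thm:cliquestability} with the appropriate value of $\ell$, together with \cref{prop:gell} to show that $\Gs$ itself avoids the property, so the bound is attained. As in the proof of \cref{theorem:kr+1all}, take $\ell=-1,0,1,k,k,k,k-2$ for parts (a)--(g). By \cref{table}, each forbidden property $P$ is $(n+\ell)$-stable and $n(P)$ exists. Since the lower bounds on $n$ here match those of \cref{theorem:kr+1all} and \cref{theorem:kr+1kpath1}, it suffices to check that they also exceed $n(P)$ and satisfy the hypothesis of \cref{cor:posaclique} (equivalently, of \cref{thm:degcondsummary}). Then \cref{thm:kstable} gives some $j\le (n-1-\ell)/2$ with $d_j\le j+\ell$, and \cref{cor:posaclique} yields $\k(G)\le \k(\Gs)$.

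For tightness, note that $\Gs\in\Gell$ whenever $\ell+1\le \ceil{(r-1)(n-1)/r}$, which holds in our ranges of $n$. Then \cref{prop:gell} shows that $\Gs$ avoids the relevant property for parts (a), (c), (e), (f) and (g). For part (d) we use the fact that $\Gs\in\mathcal{J}^k_{n,r}$ (noted in \cref{def:graphs}), so \cref{prop:gell}\ref{part:lpath} applies. Part (b) is quoted from \cite{DK26}; its chorded-pancyclic clause is also taken from there.

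The main obstacle is verifying that the lower bounds on $n$ satisfy the hypotheses of \cref{cor:posaclique} in the small-$n$ regimes, and this is the step I would check first. For $r=3$ the condition $n\ge 6\ell+26$ must hold, and this is what produces the bounds $20$, $32$, $6k+26$ and $6k+14$. For $4\le r\le7$ and $r\ge8$ it must be matched against $\max\{3+\ell+\frac{4(\ell+2)}{r-3},5+\ell+\frac{r+2\ell+7}{2r-2}\}$ and $\ell+5+\frac{4(\ell+4)}{r-4}$, respectively, including the exceptional triple $(1,8,10)$.

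The cases with $n\le r+1$ in parts (a) and (c) fall outside \cref{cor:posaclique}, and I would handle them separately. There $\T[n-1][r]=K_{n-1}$, so $\Gs$ is a colex graph. Applying \cref{cor:ore}, whose extremal graph is $K_{n-1}$ plus a vertex of degree $\ell+1$, gives the bound directly, since that graph is $K_{r+1}$-free when $n-1\le r$. This mirrors how the proof of \cref{theorem:kr+1all} treats those cases via \cref{thm:ore}. Once these verifications are done, the corollary is immediate from the cited results.
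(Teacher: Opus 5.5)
Your proposal is correct and follows the paper's proof. The paper applies \cref{thm:cliquestability} with $\ell=-1,0,1,k,k,k,k-2$ via \cref{table} and \cref{thm:kstable}, and uses \cref{prop:gell} for tightness; beyond that, your explicit check of the hypotheses of \cref{cor:posaclique} and your separate treatment of $n\le r+1$ in parts (a) and (c) via \cref{cor:ore} cover a small-$n$ range (e.g.\ $r=4$, $n\le 5$ in part (a)) where \cref{cor:posaclique} does not literally apply and which the paper's one-line proof passes over.
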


\begin{proof}
    Apply \cref{thm:cliquestability} for each property using \cref{table} and \cref{thm:kstable}. By \cref{prop:gell}, in each part, the given graph avoids the forbidden property as required.
\end{proof}

\cref{cor:cliques} alternatively can be viewed as a consequence of the following theorem and observation.

\begin{thm}[Theorem 6.2 in \cite{DK26}]\label{thm:generalcolexturan}
    Let $\mathcal{A}$ be a set of $n$-vertex, $K_{r+1}$-free graphs. Let $m = \max\set{e(G):G\in \mathcal{A}}$. Let $G \in \mathcal{A}$. For every $t \ge 2$, $\k(G) \le \k(\CT)$, and if $\CT\in\mathcal{A}$ then this upper bound is tight as equality holds if $G \cong \CT$.
\end{thm}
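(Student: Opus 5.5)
The statement is the $r$-partite analogue of \cref{thm:generalcolex}, with Frohmader's theorem playing the role of \cref{thm:KK}. The plan is to use the $r$-partite Kruskal--Katona-type result of Frohmader \cite{Frohmader}. For $K_{r+1}$-free graphs it says: if $H$ is $K_{r+1}$-free with at most $m$ edges, then for every $t \ge 2$, $\k(H) \le \k(\CT)$, where $\CT$ is the $r$-partite colex Tur\'an graph on $m$ edges, i.e., the first $m$ edges in colex order of the complete $r$-partite graph on $\N$. I would quote this in that form, noting that Frohmader states it for the clique complex (flag complex) of a $K_{r+1}$-free graph. The clique complex of such a graph has dimension at most $r-1$, so its $f$-vector is bounded by that of the colex ``Tur\'an'' complex, whose $1$-skeleton is $\CT$. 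Its $(t-1)$-faces are exactly the $t$-cliques.

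With that in hand, the argument is short.

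First, let $G \in \A$. By the definition of $m$ as the maximum of $e(\cdot)$ over $\A$, we have $e(G) \le m$.

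Second, since $G$ is $K_{r+1}$-free, the Frohmader bound gives $\k(G) \le \k(\CT)$ for every $t \ge 2$.

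Third, for tightness: if $\CT \in \A$, then taking $G \cong \CT$ gives equality trivially, so the upper bound is attained within $\A$.

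One minor point is that $\CT$ is defined on vertex set $\N$ without isolated vertices. When we say $\CT\in\A$, we mean it is padded with isolated vertices to $n$ vertices. This padding changes no clique counts for $t\ge2$.

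The only real obstacle is the translation step. Frohmader's theorem is phrased for balanced/flag simplicial complexes and $f$-vectors, so one must check two things. First, that the clique complex of a $K_{r+1}$-free graph satisfies its hypotheses; Frohmader proves flag complexes have $f$-vectors of $(r-1)$-dimensional balanced complexes, which applies since the clique number is at most $r$. Second, that the extremal (colex, $r$-colored) complex with $f_1 = m$ has at least as many faces in each dimension. This requires monotonicity of the colex construction in $m$, so that ``at most $m$'' edges suffices. I would handle the monotonicity by noting that $\CT[m']$ is a subgraph of $\CT$ for $m' \le m$, since it consists of an initial segment of the same colex order.
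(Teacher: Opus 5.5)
Your proposal is correct and follows the same route as the paper: \cref{thm:generalcolexturan} is obtained exactly as \cref{thm:generalcolex}, with Frohmader's theorem replacing \cref{thm:KK}, in the form that every $K_{r+1}$-free graph with at most $m$ edges has at most $\k(\CT)$ $t$-cliques; tightness is trivial when $\CT\in\A$. Your treatment of the translation step is fine, with one attribution to tighten: the passage from Frohmader's balanced-complex statement to the colex Tur\'an bound goes through the Frankl--F\"uredi--Kalai characterization of $f$-vectors of balanced complexes, and monotonicity in $m$ then follows from the nesting of initial colex segments, as you say.
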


\begin{obs}[Observation 6.3 in \cite{DK26}]\label{obs:colexTuran} For every $r$ and $m$, the $r$-partite colex Tur\'{a}n graph $\CT$ on $m$ edges is $\Gs[r][n][\ell] \in \Gell$, where $n = \abs{V(\CT)}$ and $\ell = \delta(\CT)-1$.
\end{obs}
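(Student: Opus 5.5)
The plan is to unwind the definition of the $r$-partite colex Tur\'an graph and read off its structure directly. I fix the convention for the complete $r$-partite graph on $\N$: vertex $i$ lies in part $i \bmod r$. Then every initial segment $[p]=\set{1,\dots,p}$ induces $\T[p][r]$, since the residues are as equal as possible, and the part receiving vertex $p+1$ is a smallest part of this $\T[p][r]$. The argument then proceeds in three steps.

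\textbf{Step 1: the shape of an initial segment of colex order.} In colex order, a pair $\set{a,b}$ with $a<b$ is ordered first by $b$ and then by $a$. Hence the edges of the complete $r$-partite graph listed in colex order are all edges inside $[p]$, followed by the edges from $p+1$ to the vertices of $[p]$ outside its part (in increasing order), then those from $p+2$, and so on. Given $m$, let $p$ be the largest integer with $e(\T[p][r]) \le m$, and put $j = m - e(\T[p][r])$. By maximality of $p$, $j$ is strictly less than the number of vertices of $[p]$ outside the part of $p+1$, which equals $e(\T[p+1][r])-e(\T[p][r])$. Therefore $\CT$ consists of $\T[p][r]$ on $[p]$ together with, when $j>0$, the vertex $p+1$ joined to the first $j$ vertices of $[p]$ not in its residue class.

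\textbf{Step 2: the neighborhood of the extra vertex.} Listing the vertices of $[p]$ outside the class of $p+1$ in increasing order, their residues cycle through the remaining $r-1$ classes in a fixed cyclic order. So the first $j$ of them are distributed as evenly as possible among those $r-1$ parts. Since $\T[p][r]$ is complete multipartite, these $j$ vertices induce $\T[j][r-1]$, and they are disjoint from the part of $p+1$, which is a smallest part of $\T[p][r]$. With $n=p+1$ and $\ell=j-1$, this is exactly the definition of $\Gs[r][n][\ell]$, and it lies in $\Gell$ because $\ell+1 = j \le \ceil{(r-1)(n-1)/r}$.

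\textbf{Step 3: identifying $\ell$ with $\delta-1$, including the degenerate case.} I must check that $\delta(\CT)=j$. Every vertex of $[p]$ has degree at least $p-\ceil{p/r}$ in $\T[p][r]$. Moreover, $j$ is strictly less than $p$ minus the size of a smallest part, and this is at most $p-\ceil{p/r}$ up to the rounding; this comparison is the only fiddly inequality, and I would settle it by comparing part sizes directly. When $j=0$, the graph is $\T[p][r]$ with $n=p$. I then reinterpret it as $\T[p-1][r]$ plus the vertex $p$, which is adjacent to everything outside its part. That part was a smallest part of $\T[p-1][r]$, and $\deg(p)=p-\ceil{p/r}=\delta(\T[p][r])$. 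So again $\CT=\Gs[r][n][\ell]$ with $\ell+1=\delta$.

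I expect the main obstacle to be bookkeeping rather than ideas. It consists of the boundary cases: the comparison of $j$ with the other degrees, and the identification when $j=0$, where the min-degree vertex must be recognized as the "added" vertex of a smaller Tur\'an graph. Beyond that, Steps 1–2 are direct from the definitions.
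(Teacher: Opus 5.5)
Your proof is correct, and unwinding the colex order on the residue-class complete $r$-partite graph directly is the natural argument for this observation; the paper gives no proof of its own and simply cites it from DK26. The inequality you defer in Step 3 is immediate, since $j\le p-\lfloor p/r\rfloor-1\le p-\lceil p/r\rceil$. Your cyclic-order remark in Step 2 also shows that the surplus neighbours of $p+1$ land in the larger parts of $T_r(p)$ first, which pins down $G^{\ell *}_{n,r}$ up to isomorphism and matches the paper's description of the pendant edge of $G^*_{n,r}$ as going to a larger part.
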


\section{Open Problems}\label{sec:open}

Relaxing the non-Hamiltonicity condition to an upper bound on the circumference, or relaxing the non-traceability condition to a $P_k$-free condition, yields the following two open questions.

\begin{qu}\label{q:cycle}What is the maximum number of edges (or $t$-cliques) in $n$-vertex, $K_{r+1}$-free graphs of circumference at most $k \le n-1$?\end{qu} 

\begin{qu}\label{q:path}What is the maximum number of edges (or $t$-cliques) in $n$-vertex, $\set{K_{r+1},P_k}$-free graphs for $k \le n-1$?\end{qu}

\cref{q:cycle} may be quite challenging, based on the difficulty of the weaker problem studied in \cite{Araujo}. If we remove the $K_{r+1}$-free condition from Questions \ref{q:cycle} and \ref{q:path}, then the resulting questions have been answered, first asymptotically by Luo \cite{Luo} and then exactly by Chakraborti and Chen \cite{ChakrabortiChen}. Moreover, Luo \cite{Luo} determined asymptotically the maximum number of $t$-cliques among $n$-vertex graphs which are $2$-connected and do not contain long cycles or long paths. Adding a $2$-connectedness condition to our problem would force the extremal graphs to be non-Hamiltonian without having a pendant edge. Similarly, the graphs in $\Gell$ are not $(\ell+2)$-connected, as the neighborhood of the exceptional vertex of degree $\ell+1$ is a separating set. Therefore adding an $(\ell+2)$-connectedness condition would result in different extremal graphs.

\begin{qu}\label{qu:2conn}What is the maximum number of edges in $n$-vertex, $K_{r+1}$-free, 
\begin{enumerate}[(a)]
    \item non-traceable graphs which are connected?
    \item non-Hamiltonian graphs which are $2$-connected?
    \item non-Hamiltonian-connected graphs which are $3$-connected?
    \item non-$k$-Hamiltonian graphs which are $(k+2)$-connected?
    \item non-$k$-Hamiltonian-connected graphs which are $(k+2)$-connected?
\end{enumerate}\end{qu}

We did not attempt to extend Theorems \ref{theorem:kr+1all} and \ref{theorem:kr+1kpath1} to maximize the number of edges for smaller values of $n$, where the graph $\Gs$ is not always extremal. \cref{table}, \cref{thm:kstableedge}, and \cref{rem:colexproperties} address $n \le r+1$, as the extremal graphs all are $K_{r+1}$-free. It would also be interesting to characterize the extremal graphs for the problem of maximizing the number of $t$-cliques.

\begin{qu}
    Which graphs not in $\Gell$, if any, achieve the maximum number of $t$-cliques among $n$-vertex, $K_{r+1}$-free, non-Hamiltonian graphs? Among graphs which are not traceable, Hamiltonian, $k$-path Hamiltonian, $k$-Hamiltonian, $k$-Hamiltonian-connected, or $k$-connected?
\end{qu}

\section*{Acknowledgments}

The authors thank V\'{a}clav Chv\'{a}tal and Linda Lesniak for clarification of the discussion of stable properties in \cite{BC76}, Jamie Radcliffe for comments which improved the presentation of the paper, and Zhanar Berikkyzy, Kirsten Hogenson, and Jessica McDonald for helpful discussions of stable properties. The first author is supported in part by the NSF Grant DMS-2402312. The second author is supported in part by Simons Foundation Grant MP-TSM-00002688.

\bibliographystyle{plainurl}
\bibliography{references}

\begin{thebibliography}{10}

\bibitem{Adamus}
J.~Adamus.
\newblock Edge condition for {H}amiltonicity in balanced tripartite graphs.
\newblock {\em Opuscula Math.}, 29(4):337--343, 2009.
\newblock \href {https://doi.org/10.7494/OpMath.2009.29.4.337}
  {\path{doi:10.7494/OpMath.2009.29.4.337}}.

\bibitem{AdamusBalanced}
J.~Adamus and L.~Adamus.
\newblock Ore and {E}rd{\H{o}}s type conditions for long cycles in balanced
  bipartite graphs.
\newblock {\em Discrete Math. Theor. Comput. Sci.}, 11(2):57--69, 2009.

\bibitem{AdamusUnbalanced}
L.~Adamus.
\newblock Edge condition for long cycles in bipartite graphs.
\newblock {\em Discrete Math. Theor. Comput. Sci.}, 11(2):25--32, 2009.

\bibitem{AmarEtAl}
Denise Amar, Odile Favaron, Pedro Mago, and Oscar Ordaz.
\newblock Biclosure and bistability in a balanced bipartite graph.
\newblock {\em J. Graph Theory}, 20(4):513--529, 1995.
\newblock \href {https://doi.org/10.1002/jgt.3190200414}
  {\path{doi:10.1002/jgt.3190200414}}.

\bibitem{Araujo}
Gabriela {Araujo-Pardo}, Zhanar Berikkyzy, Jill Faudree, Kirsten Hogenson,
  Rachel Kirsch, Linda Lesniak, and Jessica McDonald.
\newblock Finding long cycles in balanced tripartite graphs: {{A}} first step.
\newblock In Daniela Ferrero, Leslie Hogben, Sandra~R. Kingan, and Gretchen~L.
  Matthews, editors, {\em Research Trends in Graph Theory and Applications},
  pages 1--10. {Springer International Publishing}, {Cham}, 2021.
\newblock \href {https://doi.org/10.1007/978-3-030-77983-2_1}
  {\path{doi:10.1007/978-3-030-77983-2_1}}.

\bibitem{BaggaVarma}
J.~S. Bagga and B.~N. Varma.
\newblock Hamiltonian properties in bipartite graphs.
\newblock {\em Bull. Inst. Combin. Appl.}, 26:71--85, 1999.

\bibitem{BV91}
Kunwarjit~S. Bagga and Badri~N. Varma.
\newblock Bipartite graphs and degree conditions.
\newblock In {\em Graph theory, combinatorics, algorithms, and applications
  ({S}an {F}rancisco, {CA}, 1989)}, pages 564--573. SIAM, Philadelphia, PA,
  1991.

\bibitem{BBHKNSWY15}
D.~Bauer, H.~J. Broersma, J.~van~den Heuvel, N.~Kahl, A.~Nevo, E.~Schmeichel,
  D.~R. Woodall, and M.~Yatauro.
\newblock Best {{Monotone Degree Conditions}} for {{Graph Properties}}: {{A
  Survey}}.
\newblock {\em Graphs and Combinatorics}, 31(1):1--22, January 2015.
\newblock \href {https://doi.org/10.1007/s00373-014-1465-6}
  {\path{doi:10.1007/s00373-014-1465-6}}.

\bibitem{Berge}
Claude Berge.
\newblock {\em Graphs and hypergraphs}.
\newblock North-Holland Mathematical Library, Vol. 6. North-Holland Publishing
  Co., Amsterdam-London; American Elsevier Publishing Co., Inc., New York,
  revised edition, 1976.
\newblock Translated from the French by Edward Minieka.

\bibitem{BHKM26}
Zhanar Berikkyzy, Kirsten Hogenson, Rachel Kirsch, and Jessica McDonald.
\newblock Maximizing the number of stars in graphs with forbidden properties.
\newblock {\em Discrete Mathematics}, 349(2):114846, 2026.
\newblock URL:
  \url{https://www.sciencedirect.com/science/article/pii/S0012365X25004546},
  \href {https://doi.org/10.1016/j.disc.2025.114846}
  {\path{doi:10.1016/j.disc.2025.114846}}.

\bibitem{Boesch74}
F.T Boesch.
\newblock The strongest monotone degree condition for n-connectedness of a
  graph.
\newblock {\em Journal of Combinatorial Theory, Series B}, 16(2):162--165,
  1974.
\newblock \href {https://doi.org/10.1016/0095-8956(74)90058-6}
  {\path{doi:10.1016/0095-8956(74)90058-6}}.

\bibitem{Bondy69}
J.~A. Bondy.
\newblock Properties of graphs with constraints on degrees.
\newblock {\em Studia Sci. Math. Hungar.}, 4:473--475, 1969.

\bibitem{BC76}
J.A. Bondy and V.~Chv\'{a}tal.
\newblock A method in graph theory.
\newblock {\em Discrete Mathematics}, 15(2):111--135, 1976.
\newblock \href {https://doi.org/10.1016/0012-365X(76)90078-9}
  {\path{doi:10.1016/0012-365X(76)90078-9}}.

\bibitem{BS81}
Richard~A. Brualdi and Robert~F. Shanny.
\newblock Hamiltonian line graphs.
\newblock {\em J. Graph Theory}, 5(3):307--314, 1981.
\newblock \href {https://doi.org/10.1002/jgt.3190050312}
  {\path{doi:10.1002/jgt.3190050312}}.

\bibitem{ChakrabortiChen}
Debsoumya Chakraborti and Da~Qi Chen.
\newblock Exact results on generalized {Erd\H{o}s-Gallai} problems.
\newblock {\em European Journal of Combinatorics}, 120:103955, 2024.
\newblock \href {https://doi.org/10.1016/j.ejc.2024.103955}
  {\path{doi:10.1016/j.ejc.2024.103955}}.

\bibitem{CKK68}
Gary Chartrand, S.~F. Kapoor, and Hudson~V. Kronk.
\newblock A sufficient condition for {\emph{n}}-connectedness of graphs.
\newblock {\em Mathematika}, 15(1):51--52, June 1968.
\newblock \href {https://doi.org/10.1112/S0025579300002369}
  {\path{doi:10.1112/S0025579300002369}}.

\bibitem{CKL70}
Gary Chartrand, S.~F. Kapoor, and Don~R. Lick.
\newblock {$n$}-{H}amiltonian graphs.
\newblock {\em J. Combinatorial Theory}, 9:308--312, 1970.
\newblock \href {https://doi.org/10.1016/S0021-9800(70)80069-2}
  {\path{doi:10.1016/S0021-9800(70)80069-2}}.

\bibitem{CK68}
Gary Chartrand and Hudson~V. Kronk.
\newblock Randomly traceable graphs.
\newblock {\em SIAM Journal on Applied Mathematics}, 16(4):696--700, 1968.
\newblock URL: \url{http://www.jstor.org/stable/2099120}.

\bibitem{CFGJL}
G.~Chen, R.~J. Faudree, R.~J. Gould, M.~S. Jacobson, and L.~Lesniak.
\newblock Hamiltonicity in balanced $k$-partite graphs.
\newblock {\em Graphs and Combinatorics}, 11(3):221--231, 1995.
\newblock \href {https://doi.org/10.1007/BF01793008}
  {\path{doi:10.1007/BF01793008}}.

\bibitem{ChenJacobson}
G.~Chen and M.~S. Jacobson.
\newblock Degree sum conditions for {H}amiltonicity on {$k$}-partite graphs.
\newblock {\em Graphs Combin.}, 13(4):325--343, 1997.
\newblock \href {https://doi.org/10.1007/BF03353011}
  {\path{doi:10.1007/BF03353011}}.

\bibitem{Chvatal}
V~Chv{\'a}tal.
\newblock On {{Hamilton}}'s ideals.
\newblock {\em Journal of Combinatorial Theory, Series B}, 12(2):163--168,
  April 1972.
\newblock \href {https://doi.org/10.1016/0095-8956(72)90020-2}
  {\path{doi:10.1016/0095-8956(72)90020-2}}.

\bibitem{DK26}
Aleyah {Dawkins} and Rachel {Kirsch}.
\newblock {Ore plus Tur{\'a}n}.
\newblock {\em arXiv e-prints}, page arXiv:2310.11452, October 2023.
\newblock \href {https://arxiv.org/abs/2310.11452} {\path{arXiv:2310.11452}},
  \href {https://doi.org/10.48550/arXiv.2310.11452}
  {\path{doi:10.48550/arXiv.2310.11452}}.

\bibitem{Dirac}
G.~A. Dirac.
\newblock Some theorems on abstract graphs.
\newblock {\em Proc. London Math. Soc. (3)}, 2:69--81, 1952.

\bibitem{ES88}
R.~C. Entringer and E.~F. Schmeichel.
\newblock Edge conditions and cycle structure in bipartite graphs.
\newblock {\em Ars Combin.}, 26:229--232, 1988.

\bibitem{Erdos}
Paul~L. Erd\H{o}s and Tibor Gallai.
\newblock On maximal paths and circuits of graphs.
\newblock {\em Acta Mathematica Academiae Scientiarum Hungarica}, 10:337--356,
  1959.
\newblock \href {https://doi.org/10.1007/BF02024498}
  {\path{doi:10.1007/BF02024498}}.

\bibitem{FerreroLesniak}
D.~Ferrero and L.~Lesniak.
\newblock Chorded pancyclicity in {$k$}-partite graphs.
\newblock {\em Graphs Combin.}, 34(6):1565--1580, 2018.
\newblock \href {https://doi.org/10.1007/s00373-018-1942-4}
  {\path{doi:10.1007/s00373-018-1942-4}}.

\bibitem{Frohmader}
Andrew Frohmader.
\newblock Face vectors of flag complexes.
\newblock {\em Israel Journal of Mathematics}, 164(1):153--164, March 2008.
\newblock \href {https://doi.org/10.1007/s11856-008-0024-3}
  {\path{doi:10.1007/s11856-008-0024-3}}.

\bibitem{FKL18}
Zolt\'an F\"uredi, Alexandr Kostochka, and Ruth Luo.
\newblock Extensions of a theorem of {Erd\H{o}s} on nonhamiltonian graphs.
\newblock {\em J. Graph Theory}, 89(2):176--193, 2018.
\newblock \href {https://doi.org/10.1002/jgt.22246}
  {\path{doi:10.1002/jgt.22246}}.

\bibitem{FKL19}
Zolt{\'a}n F{\"u}redi, Alexandr Kostochka, and Ruth Luo.
\newblock A variation of a theorem by {{P\'osa}}.
\newblock {\em Discrete Mathematics}, 342(7):1919--1923, July 2019.
\newblock \href {https://doi.org/10.1016/j.disc.2019.03.008}
  {\path{doi:10.1016/j.disc.2019.03.008}}.

\bibitem{GP25}
D{\'a}niel {Gerbner} and Cory {Palmer}.
\newblock {Survey of generalized Tur{\'a}n problems -- counting subgraphs}.
\newblock {\em arXiv e-prints}, page arXiv:2506.03418, June 2025.
\newblock \href {https://arxiv.org/abs/2506.03418} {\path{arXiv:2506.03418}}.

\bibitem{G03}
Ronald~J. Gould.
\newblock Advances on the {H}amiltonian problem---a survey.
\newblock {\em Graphs Combin.}, 19(1):7--52, 2003.
\newblock \href {https://doi.org/10.1007/s00373-002-0492-x}
  {\path{doi:10.1007/s00373-002-0492-x}}.

\bibitem{G14}
Ronald~J. Gould.
\newblock Recent advances on the {H}amiltonian problem: {S}urvey {III}.
\newblock {\em Graphs Combin.}, 30(1):1--46, 2014.
\newblock \href {https://doi.org/10.1007/s00373-013-1377-x}
  {\path{doi:10.1007/s00373-013-1377-x}}.

\bibitem{hardmath}
hardmath (https://math.stackexchange.com/users/3111/hardmath).
\newblock Determining the maximum number of edges in triangle-free graphs that
  are not bipartite.
\newblock Mathematics Stack Exchange.
\newblock URL:
  \url{https://math.stackexchange.com/questions/4202442/prove-that-fn-le-frac14n-121}.

\bibitem{Lavrov}
Misha~Lavrov (https://math.stackexchange.com/users/383078/misha lavrov).
\newblock Alternative way to calculate number of edges in {T}urán-graphs?
\newblock Mathematics Stack Exchange.
\newblock URL: \url{https://math.stackexchange.com/q/4090101}.

\bibitem{Katona}
G.~Katona.
\newblock A theorem of finite sets.
\newblock In {\em Theory of graphs ({P}roc. {C}olloq., {T}ihany, 1966)}, pages
  187--207. Academic Press, New York, 1968.
\newblock \href {https://doi.org/10.1007/978-0-8176-4842-8_27}
  {\path{doi:10.1007/978-0-8176-4842-8_27}}.

\bibitem{KR23}
Rachel Kirsch and Jamie Radcliffe.
\newblock Many cliques in bounded-degree hypergraphs.
\newblock {\em SIAM Journal on Discrete Mathematics}, 37(3):1436--1456, 2023.
\newblock \href {https://doi.org/10.1137/22M1507565}
  {\path{doi:10.1137/22M1507565}}.

\bibitem{KronkGen}
Hudson~V. Kronk.
\newblock Generalization of a theorem of {P\'{o}sa}.
\newblock {\em Proc. Amer. Math. Soc.}, 21(1):77--78, April 1969.
\newblock \href {https://doi.org/10.1090/S0002-9939-1969-0237377-9}
  {\path{doi:10.1090/S0002-9939-1969-0237377-9}}.

\bibitem{Kronk}
Hudson~V. Kronk.
\newblock A note on {$k$-path Hamiltonian graphs}.
\newblock {\em Journal of Combinatorial Theory}, 7(2):104--106, September 1969.
\newblock \href {https://doi.org/10.1016/S0021-9800(69)80043-8}
  {\path{doi:10.1016/S0021-9800(69)80043-8}}.

\bibitem{Kruskal}
Joseph~B. Kruskal.
\newblock 12. {{The Number}} of {{Simplices}} in a {{Complex}}.
\newblock In Richard Bellman, editor, {\em Mathematical {{Optimization
  Techniques}}}, pages 251--278. {University of California Press}, December
  1963.
\newblock \href {https://doi.org/10.1525/9780520319875-014}
  {\path{doi:10.1525/9780520319875-014}}.

\bibitem{KO14}
Daniela K\"uhn and Deryk Osthus.
\newblock Hamilton cycles in graphs and hypergraphs: an extremal perspective.
\newblock In {\em Proceedings of the {I}nternational {C}ongress of
  {M}athematicians---{S}eoul 2014. {V}ol. {IV}}, pages 381--406. Kyung Moon Sa,
  Seoul, 2014.

\bibitem{Lick}
Don~R. Lick.
\newblock {$n$-hamiltonian connected graphs}.
\newblock {\em Duke Mathematical Journal}, 37(2):387 -- 392, 1970.
\newblock \href {https://doi.org/10.1215/S0012-7094-70-03749-X}
  {\path{doi:10.1215/S0012-7094-70-03749-X}}.

\bibitem{Luo}
Ruth Luo.
\newblock The maximum number of cliques in graphs without long cycles.
\newblock {\em Journal of Combinatorial Theory, Series B}, 128:219--226,
  January 2018.
\newblock \href {https://doi.org/10.1016/j.jctb.2017.08.005}
  {\path{doi:10.1016/j.jctb.2017.08.005}}.

\bibitem{Mantel}
W.~Mantel.
\newblock Problem 28 ({Solution by H. Gouwentak, W. Mantel, J. Teixeira de
  Mattes, F. Schuh and W. A. Wythoff}.
\newblock {\em Wiskundige Opgaven}, 10:60--61, 1907.

\bibitem{MoonMoser}
J.~Moon and L.~Moser.
\newblock On hamiltonian bipartite graphs.
\newblock {\em Israel Journal of Mathematics}, 1(3):163--165, 1963.

\bibitem{Ore}
O.~Ore.
\newblock Note on {H}amilton circuits.
\newblock {\em Amer. Math. Monthly}, 67:55, 1960.

\bibitem{OreEdgeCond}
O.~Ore.
\newblock Arc coverings of graphs.
\newblock {\em Ann. Mat. Pura Appl. (4)}, 55:315--321, 1961.
\newblock \href {https://doi.org/10.1007/BF02412090}
  {\path{doi:10.1007/BF02412090}}.

\bibitem{OreHConnected}
Oystein Ore.
\newblock Hamilton connected graphs.
\newblock {\em J. Math. Pures Appl. (9)}, 42:21--27, 1963.

\bibitem{turan}
P.~Tur\'{a}n.
\newblock On an extremal problem in graph theory.
\newblock {\em Matematikai és Fizikai Lapok}, 48:436--452, 1941.

\bibitem{West20}
Douglas~Brent West.
\newblock {\em Combinatorial Mathematics}.
\newblock Cambridge University Press, Cambridge, UK ; New York, NY, 2020.

\bibitem{Williamson}
James~E. Williamson.
\newblock {\em On Hamiltonian-connected graphs}.
\newblock PhD thesis, Western Michigan University, 1973.
\newblock URL: \url{https://scholarworks.wmich.edu/dissertations/2851/}.

\bibitem{Zhao23}
Yufei Zhao.
\newblock {\em Graph Theory and Additive Combinatorics: Exploring Structure and
  Randomness}.
\newblock Cambridge University Press, 2023.

\bibitem{Zykov}
A.~A. Zykov.
\newblock On some properties of linear complexes.
\newblock {\em Mat. Sbornik N.S.}, 24/66:163--188, 1949.

\end{thebibliography}
\end{document}